\def\ps@pprintTitle{%
	\let\@oddhead\@empty
	\let\@evenhead\@empty
	\let\@oddfoot\@empty
	\let\@evenfoot\@oddfoot
}
\newtheorem{theorem}{Theorem}[section]
\newtheorem{example}[theorem]{Example}
\newtheorem{proposition}[theorem]{Proposition}
\newtheorem{remark}[theorem]{Remark}
\newtheorem{corollary}[theorem]{Corollary}
\newtheorem{definition}[theorem]{Definition}
\numberwithin{equation}{section}
\newenvironment{proof}[1][\noindent \textbf{Proof: }]{#1}{ \hfill $\square$ \vspace{2mm}}
\begin{document}

\begin{frontmatter}

\title{Global  hypoellipticity  for a class of overdetermined systems of pseudo-differential operators on the torus}


\author[addressUFPR]{Cleber de Medeira \corref{correspondingauthor}}
\cortext[correspondingauthor]{Corresponding author}
\ead{clebermedeira@ufpr.br}

\author[addressUFPR]{Fernando de \'Avila Silva}
\ead{fernando.avila@ufpr.br}

\address[addressUFPR]{Department of Mathematics, Federal University of Paran\'a, Caixa Postal 19081, \\ CEP 81531-980, Curitiba, Brazil}

\begin{abstract}
This article studies the  global hypoellipticity 
of a class of overdetermined systems of pseudo-differential operators defined on the torus. The main goal consists in establishing  connections between the global hypoellipticity of the system and the  global hypoellipticity of its normal form. It is proved that an  obstruc\-tion of number-theoretical nature appears as a necessary condition to the  global hypoellipticity. Conversely, the  sufficiency is approached ana\-lyzing  three types of hypotheses: a H\"{o}rmander condition, logarithmic growth and super-logarithmic growth. 
\end{abstract}

\begin{keyword}
Global hypoellipticity \sep Pseudo-differential operators \sep Overdetermined Systems \sep Liouville vectors \sep Fourier series.
\MSC[2020] 35N05 \sep 35N10 \sep  35H10  \sep 35B10 \sep 35B65
\end{keyword}

\end{frontmatter}


\tableofcontents

\section{Introduction}

This work deals with the global hypoellipticity of the following system of   pseudo-differential operators 
\begin{equation}\label{system}
L_j = D_{t_j} + c_j(t_j) P_j(D_x),\quad   j=1,\ldots,n,
\end{equation}
on the torus $\mathbb{T}^{n+N} \simeq (\mathbb{R}/2\pi\mathbb{Z})^{n+N}$, 
where  $c_j$ are smooth functions on $\mathbb{T}_{t_j}^{1}$, $ D_{t_j}={i}^{-1}{\partial_{t_j}}$ and $P_j(D_x)$ are pseudo-differential operators of order $m_j$
defined by 
\begin{equation}\label{pdo}
P_j(D_x)  u(x) = \sum_{\xi \in \mathbb{Z}^N}{e^{i x \cdot \xi} p_j(\xi) \widehat{u}(\xi)}.
\end{equation}

The expression $\widehat{u}(\xi) = (2\pi)^{-N} \int_{\mathbb{T}^N}{e^{- i x \cdot \xi} u(x) dx}$, $\xi \in \mathbb{Z}^N$, denotes the Fourier coefficients of $u$ and for each $j$ the toroidal symbol  $p_j(\xi)$ belongs to the class $S^{m_j}(\mathbb{Z}^N)$ in the sense of Definition 4.1.7 in \cite{RT3}. In particular, there exists  a positive constant $C_j$ such that 
\begin{equation}\label{bound-symb}
|p_j(\xi)| \leq C_j \|\xi\| ^{m_j}, \quad \forall \xi \in \mathbb{Z}^{N}.
\end{equation}

By $(t,x)=(t_1,\ldots,t_n,x_1,\ldots,x_N)$ we denote the periodic variables in $\mathbb{T}^{n+N}$, while their duals  are denoted by 
$(\tau, \xi) = (\tau_1, \ldots, \tau_n, \xi_1, \ldots, \xi_N)  \in \mathbb{Z}^{n+N}.$ The \textit{normal form} of  system \eqref{system} is the constant coefficient  system 
\begin{equation}\label{normal-form}
L_{j0} = D_{t_j} + c_{j0} P_j(D_x),\quad   j=1,\ldots,n,
\end{equation}
where $c_{j0}=(2\pi)^{-1}\int_{\mathbb{T}^1}c_j(s)ds$.

By considering the associated closed $1$-form $c(t)=\sum_{j=1}^{n}c_j(t_j)dt_j$ defined on $\mathbb{T}^n$, we have that  system \eqref{system} (more precisely $iL_j$, $j=1,\ldots, n$) gives rise to a complex of pseudo-differential operators (see Section I.3 in \cite{treves76}), 
which, at the first step, acts in the following way
$$
\mathbb{L}u=d_tu+c(t,D_x)\wedge u,
$$
where $ u\in C^{\infty}(\mathbb{T}^{n+N})$ (or $u\in \mathcal{D}'(\mathbb{T}^{n+N}))$, $d_t$ denotes the exterior derivative on $\mathbb{T}^n$ and $c(t,D_x)\wedge u=\sum_{j=1}^{n}i c_j(t_j)P_j(D_x)udt_j$. Analogously, we  may regard the normal form of $\mathbb{L}$  by defining 
$$
\mathbb{L}_0 u=d_tu+c_0(D_x)\wedge u,
$$
where $c_0(D_x) \wedge u=\sum_{j=1}^{n}i c_{j0}P_j(D_x)udt_j$. 
 
Therefore, the study of the global hypoellipticity of  system \eqref{system}, which is the interest of this article, becomes equivalent  to performing this study on the complex of pseudo-differential operators, at the first step, as described in \cite{Maire80}, namely:

\begin{definition}\label{def-GH}
The operator $\mathbb{L}$ is said to be globally hypoelliptic if the conditions $u \in \mathcal{D}'(\mathbb{T}^{n+N})$  and $\mathbb{L}u\in\bigwedge^1 C^{\infty}(\mathbb{T}^{n+N})$ imply $u  \in C^{\infty}(\mathbb{T}^{n+N})$. Equi\-va\-lently,   system \eqref{system} is globally hypoelliptic when the conditions $u \in \mathcal{D}'(\mathbb{T}^{n+N})$ and   $L_j u \in C^{\infty}(\mathbb{T}^{n+N})$ for all $j\in\{1, \ldots, n\}$, imply $u  \in C^{\infty}(\mathbb{T}^{n+N})$.
\end{definition}

By an abuse of notation,  from now on we shall denote  system \eqref{system} by $\mathbb{L}$ and its normal form \eqref{normal-form} by 
$\mathbb{L}_0$.

We point out that the local hypoellipticity and local solvability of systems of type \eqref{system} are described  in the seminal work of Treves \cite{treves76}. On the other hand, 
the study of the global hypoellipticity of systems of differential operators on compact manifolds, with special attention to vector fields,  has been intensively studied. We highlight  Bergamasco, Cordaro and Malagutti's work \cite{BCM93}. These problems are also considered in terms of the scales of analytic and Gevrey spaces; see for instance  \cite{BERG99,AKM} and references therein.

Although the analysis on global hypoellipticity and global solvability  of systems of differential operators, defined on compact manifolds, composes a widely explored problem in the literature, less common is the study of such global properties dealing with classes of pseudo-differential operators. In this manner, this article contributes by presenting an approach for the study of the global hypoellipticity of unexplored classes of systems on the torus.

This article is organized as follows: Section \ref{Systems with constant coefficients} discusses systems with cons\-tant coefficients. There, we present a characterization of the global hypoellipticity  for that case (see Theorem \ref{general-constant-system}) by following the approach introduced by Greenfield and Wallach in \cite{GW1}.

Additionally, Subsection \ref{homo-symbols} discusses systems on $\mathbb{T}^{n+1}$ with homogeneous symbols. There, we  present connections between global hypoellipticity and certain \textit{simultaneous Diophantine approximations} (see Definition \ref{def-SDA} and Theorem \ref{homogeneous hypoellipticity}). 

In Section \ref{Systems with variable coefficients}, the case of variable coefficients is studied. Subsection \ref{sec necessary conditions} is directed to the study of necessary conditions to the global hypoellipticity of  system \eqref{system}, while  Subsection \ref{sec-suff-cond} is dedicated to the study of sufficient conditions. 

In Theorem \ref{necessary condition} we show that the global hypoellipticity of the normal form \eqref{normal-form} is a necessary condition to the global hypoellipticity of system \eqref{system}. On the other hand, in Theorem \eqref{main-suff}, we show that the opposite direction holds true, provided that some operator  $L_{j0}$ is globally hypoelliptic and that suitable hypotheses are considered on the functions
$$
\zeta \in \mathbb{T}^1  \mapsto \mathcal{M}_j(\zeta, \xi) \doteq c_j(\zeta)p_j(\xi), \quad\xi \in \mathbb{Z}^N.
$$
Several examples are presented in order to illustrate such hypotheses.

Finally, in Section \ref{sec-reduction}, the reduction to the normal form is discussed. Under appropriated conditions on the functions $\Im\mathcal{M}_j(\cdot, \xi)$, we will exhibit, in Theorem \ref{t-general-reduction},  an isomorphism  $\Psi$ defined on both  spaces $\mathcal{D}'(\mathbb{T}^{n+N})$ and $C^{\infty}(\mathbb{T}^{n+N})$ satisfying  the conjugation
\begin{equation*}
L_j = \Psi \circ L_{j0} \circ  \Psi^{-1}, \quad j=1, \ldots, n.
\end{equation*}
In particular, it follows from this conjugation that  $\mathbb{L}$ is globally hypoelliptic if and only if the same occurs with $\mathbb{L}_0$.

\section{Systems with constant coefficients}\label{Systems with constant coefficients}

In this section we investigate the global hypoellipticity of the following system with constant coefficient operators
\begin{equation}\label{constant-system}
L_j = D_{t_j} + P_j(D_x), \quad j=1,\ldots,n,
\end{equation}
defined on $\mathbb{T}_t^n\times\mathbb{T}_x^N$, where $P_j(D_x)$ are pseudo-differential operators of order $m_j$  with toroidal symbol $p_j(\xi)\in S^{m_j}(\mathbb{Z}^N)$.

The symbol of each $L_j$ is denoted by $\widehat{L}_j(\tau_j,\xi) = \tau_j + p_j(\xi)$ and we refer to
$$
\widehat{L}(\tau, \xi) = (\widehat{L}_1(\tau_1,\xi), \ldots, \widehat{L}_n(\tau_n,\xi)), \quad (\tau, \xi) \in \mathbb{Z}^n \times \mathbb{Z}^N,
$$
as the symbol of system \eqref{constant-system}.

In our approach  we make use of the following standard result: 
\begin{proposition}\label{prop-smooth-const}
	Let $\{a_{\eta}\}_{\eta \in \mathbb{Z}^{d}}$ be a sequence of complex numbers. The series given by 
	\begin{equation*}
	v(y) =  \sum_{\eta \in \mathbb{Z}^{d}}{a_{\eta} e^{i  y \cdot \eta}}
	\end{equation*}
	 converges  in $\mathcal{D}'(\mathbb{T}^{d})$ if and only if there  exist positive constants $C$, $M$ and $R$ such that
	\begin{equation}\label{smooth-coef-full}
	|a_{\eta}| \leq C \|\eta\|^{-M}, \quad \|\eta\|\geq R.
	\end{equation}
	Moreover, $v(\cdot) \in C^{\infty}(\mathbb{T}^{d})$ if and only if  \eqref{smooth-coef-full} is fulfilled for every $M>0$. In both cases we have $\widehat{v}(\eta) = a_{\eta}$.
\end{proposition}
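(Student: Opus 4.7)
The plan is to prove both claims in parallel, using the standard duality between Fourier coefficients and regularity on the torus. The key preliminary input is that a smooth function $\phi\in C^\infty(\mathbb{T}^d)$ has Fourier coefficients decaying faster than $\|\eta\|^{-M}$ for every $M>0$, which follows by repeated integration by parts against the identity $(1-\Delta_y)^k e^{-iy\cdot\eta}=(1+\|\eta\|^2)^k e^{-iy\cdot\eta}$. Dually, any $v\in\mathcal{D}'(\mathbb{T}^d)$ admits Fourier coefficients $\widehat{v}(\eta)=(2\pi)^{-d}\langle v,e^{-iy\cdot\eta}\rangle$ of at most polynomial growth, by the continuity estimate $|\langle v,\phi\rangle|\leq C\|\phi\|_{C^k}$ applied to $\phi(y)=e^{-iy\cdot\eta}$, whose $C^k$-norm is $O(\|\eta\|^k)$.

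For the distributional statement, the sufficiency direction amounts to pairing the partial sums $S_R(y)=\sum_{\|\eta\|\leq R}a_\eta e^{iy\cdot\eta}$ with an arbitrary test function $\phi\in C^\infty(\mathbb{T}^d)$, obtaining $(2\pi)^d\sum_\eta a_\eta\widehat{\phi}(-\eta)$. This series converges absolutely by combining the hypothesized bound on $a_\eta$ with the rapid decay of $\widehat{\phi}$, so $\{S_R\}$ has a well-defined weak$^*$ limit $v\in\mathcal{D}'(\mathbb{T}^d)$. For the converse, assuming that the formal series already defines a distribution $v$, the move is to test $v$ against the exponentials themselves: the continuity estimate combined with $\|e^{-iy\cdot\eta}\|_{C^k}=O(\|\eta\|^k)$ produces the required growth bound on $a_\eta$.

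For the smoothness statement, sufficiency is direct: the rapid decay of $\{a_\eta\}$ justifies term-by-term differentiation, so $\sum_\eta a_\eta (i\eta)^\alpha e^{iy\cdot\eta}$ converges absolutely and uniformly for every multi-index $\alpha$, yielding $v\in C^\infty(\mathbb{T}^d)$. Necessity follows from $\widehat{D^\alpha v}(\eta)=\eta^\alpha \widehat{v}(\eta)$ together with the trivial bound $|\widehat{D^\alpha v}(\eta)|\leq(2\pi)^{-d}\|D^\alpha v\|_{L^1(\mathbb{T}^d)}$, which gives decay of order $\|\eta\|^{-|\alpha|}$ for each $\alpha$.

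The main subtlety shared by both parts is the identification $\widehat{v}(\eta)=a_\eta$. In the distributional setting this requires exchanging the weak$^*$ limit $S_R\to v$ with the $\mathcal{D}'$--$C^\infty$ pairing against an exponential, which is legitimate because $e^{-iy\cdot\eta}\in C^\infty(\mathbb{T}^d)$ and $S_R\to v$ in $\mathcal{D}'(\mathbb{T}^d)$; in the smooth case it is automatic by uniform convergence. Once this identification is in hand, the equivalences follow without further difficulty.
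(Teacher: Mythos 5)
The paper does not actually prove this proposition: it is introduced with the words ``we make use of the following standard result'' and no argument is supplied, so there is nothing in the source to compare your proof against. Your argument is the standard one (integration by parts with $(1-\Delta_y)^k$ for the decay of $\widehat{\phi}$, the finite-order continuity estimate for the polynomial bound on distributional coefficients, pairing the partial sums with test functions for sufficiency, and testing against the exponentials for necessity and for the identification $\widehat{v}(\eta)=a_\eta$), and it is complete and correct; the only step worth making explicit is that the limiting functional in the sufficiency direction is continuous, which follows from the estimate $\bigl|\sum_\eta a_\eta\widehat{\phi}(-\eta)\bigr|\leq C\|\phi\|_{C^{k}}$ for $k$ large enough. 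One point of divergence from the statement as printed: you prove (correctly) that convergence in $\mathcal{D}'(\mathbb{T}^d)$ is equivalent to \emph{polynomial growth} $|a_\eta|\leq C\|\eta\|^{M}$, whereas the displayed condition \eqref{smooth-coef-full} carries the exponent $-M$ with $M>0$, i.e.\ a decay condition. Taken literally that characterization is false for the distributional case (and incompatible with the paper's own later use of the proposition, where bounded, non-decaying coefficients supported on a sparse sequence are claimed to define a distribution), so the sign in \eqref{smooth-coef-full} should be read as a typo for the $\mathcal{D}'$ statement; your proof addresses the intended, correct version.
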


We prove that  the global hypoellipticity of  system \eqref{constant-system} can be cha\-racte\-rized
by extending  the results of Greenfield's and Wallach's work \cite{GW1}, as stated below.

\begin{theorem}\label{general-constant-system}
System \eqref{constant-system} is globally hypoelliptic if and only if there exist positive constants $C,M$ and $R$ such that
\begin{equation}\label{GW}
\|\widehat{L}(\tau, \xi)\| \geq C \|(\tau, \xi)\|^{-M}, \quad  \|(\tau, \xi)\| \geq R.
\end{equation}
\end{theorem}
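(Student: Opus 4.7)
The plan is to follow the strategy of Greenfield and Wallach, using Proposition \ref{prop-smooth-const} to translate the problem into a problem about decay of Fourier coefficients. Since $L_j$ has constant coefficients, it acts diagonally on Fourier modes:
$$\widehat{L_j u}(\tau,\xi) = \widehat{L}_j(\tau_j,\xi)\,\widehat{u}(\tau,\xi),$$
and hence $\|\widehat{L}(\tau,\xi)\|^2\,|\widehat{u}(\tau,\xi)|^2 = \sum_{j=1}^{n}|\widehat{L_j u}(\tau,\xi)|^2$. This identity is the only algebraic input needed for both directions.

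For the sufficiency direction, I would take $u\in\mathcal{D}'(\mathbb{T}^{n+N})$ with $f_j\doteq L_j u\in C^\infty(\mathbb{T}^{n+N})$ for every $j$. The hypothesis gives, for $\|(\tau,\xi)\|\geq R$,
$$|\widehat{u}(\tau,\xi)|\leq C^{-1}\|(\tau,\xi)\|^{M}\Bigl(\sum_{j=1}^{n}|\widehat{f}_j(\tau,\xi)|^{2}\Bigr)^{1/2}.$$
Since each $f_j\in C^{\infty}$, by Proposition \ref{prop-smooth-const} the right-hand side decays faster than any polynomial, so $\widehat{u}$ does as well, and applying the same proposition once more we conclude that $u\in C^{\infty}(\mathbb{T}^{n+N})$.

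For the necessity direction, I would argue by contrapositive. Assuming \eqref{GW} fails, I can extract, by a diagonal selection, a sequence $(\tau^{(k)},\xi^{(k)})\in\mathbb{Z}^{n+N}$ with pairwise distinct entries such that $\|(\tau^{(k)},\xi^{(k)})\|\geq k$ and $\|\widehat{L}(\tau^{(k)},\xi^{(k)})\|<\|(\tau^{(k)},\xi^{(k)})\|^{-k}$. Define
$$u(t,x)=\sum_{k\geq 1}e^{i(\tau^{(k)}\cdot t+\xi^{(k)}\cdot x)}.$$
Since the Fourier coefficients are bounded by $1$, Proposition \ref{prop-smooth-const} ensures $u\in\mathcal{D}'(\mathbb{T}^{n+N})$, while they do not decay, so $u\notin C^{\infty}$. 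On the other hand, for each $j$,
$$|\widehat{L_j u}(\tau^{(k)},\xi^{(k)})|=|\widehat{L}_j(\tau_j^{(k)},\xi^{(k)})|\leq \|\widehat{L}(\tau^{(k)},\xi^{(k)})\|<\|(\tau^{(k)},\xi^{(k)})\|^{-k},$$
and these coefficients vanish elsewhere, so $L_j u\in C^{\infty}(\mathbb{T}^{n+N})$, contradicting global hypoellipticity.

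I do not anticipate a real obstacle; the only mildly delicate points are the correct quantitative extraction of the sequence from the negation of \eqref{GW} (ensuring distinct multi-indices so that $u$ is well defined as a series with unit coefficients) and the careful application of Proposition \ref{prop-smooth-const} in both directions, taking into account that the symbol bound \eqref{bound-symb} together with the trivial estimate on $\widehat{L}_j$ keeps all terms under control.
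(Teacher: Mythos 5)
Your proposal is correct and follows essentially the same route as the paper: sufficiency by combining the lower bound \eqref{GW} with the rapid decay of the $\widehat{f}_j$ via Proposition \ref{prop-smooth-const}, and necessity by contrapositive, constructing $u=\sum_k e^{i(\tau^{(k)}\cdot t+\xi^{(k)}\cdot x)}$ from a sequence violating \eqref{GW}. The only cosmetic difference is that you phrase the key estimate through the Euclidean identity $\|\widehat{L}(\tau,\xi)\|^2|\widehat{u}(\tau,\xi)|^2=\sum_j|\widehat{L_ju}(\tau,\xi)|^2$ where the paper takes a maximum over $j$; these are equivalent.
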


\begin{proof}
Suppose that \eqref{GW} fails. Then, we take an increasing sequence $\{(\tau_{\ell}, \xi_{\ell})\}_{\ell \in \mathbb{N}} \subset \mathbb{Z}^{n+N}$ such that
$$
\|\widehat{L}(\tau_{\ell},\xi_{\ell})\|< \|(\tau_{\ell},\xi_{\ell})\|^{-\ell}, \quad \forall \ell \in \mathbb{N}.
$$	

In view of Proposition \ref{prop-smooth-const}, we have that 
$
u(t,x)=\sum_{\ell=1}^{\infty} e^{i (t,x)\cdot (\tau_{\ell},\xi_{\ell})}
$ belongs to $\mathcal{D}'(\mathbb{T}^{n+N})\setminus C^{\infty}(\mathbb{T}^{n+N})$. For each $j \in \{1, \ldots, n\}$ we set  $f_j \doteq L_j u$. Then, 
$$
|\widehat{f}_j(\tau_{\ell},\xi_{\ell})|=|\widehat{L}_j(\tau_{j\ell},\xi_{\ell}) | \leq \|\widehat{L}(\tau_{\ell},\xi_{\ell})\|\leq \|(\tau_{\ell},\xi_{\ell})\|^{-\ell}, \quad \forall \ell \in \mathbb{N},
$$
hence, $f_j \in C^{\infty}(\mathbb{T}^{n+N})$ and the system is not globally hypoelliptic. \\

Conversely, assume that \eqref{GW} holds true and let $u\in\mathcal{D}'(\mathbb{T}^{n+N})$ be a solution of the equations
$$
L_j u  = f_j \in  C^{\infty}(\mathbb{T}^{n+N}), \quad j=1, \ldots, n.
$$

By replacing the formal Fourier series of $u$ and $f_j$ in the previous equations we obtain
\begin{equation}\label{eqq1}
|\widehat{L}_j(\tau_j,\xi)| |\widehat{u}(\tau,\xi)| = |\widehat{f}_j(\tau,\xi)|, \quad \forall (\tau,\xi) \in \mathbb{Z}^{n+N}.
\end{equation}

It follows from Proposition \ref{prop-smooth-const} that given $\nu>0$ there exist positive constants $C_\nu$ and $R_\nu$ such that 
\begin{equation}\label{eqq2}
\max_{j=1,\ldots,n} |\widehat{f}_j(\tau,\xi))|\leq C_\nu  \|(\tau,\xi)\|^{-(\nu+M)}, \quad \|(\tau,\xi)\|\geq R_\nu. 
\end{equation}

Therefore, by combining \eqref{GW}, \eqref{eqq1} and \eqref{eqq2} we obtain
$$
|\widehat{u}(\tau,\xi)| \leq C_{\nu} C^{-1} \|(\tau,\xi)\|^{-\nu}, \quad \|(\tau,\xi)\| \geq \widetilde{R},
$$
where $\widetilde{R} = \max\{R, R_{\nu}\}$. Hence, $u$ is a smooth function on $\mathbb{T}^{n+N}$ and the proof is complete.

\end{proof}

\begin{corollary}\label{coro-const-system}
If there exists $j\in \{1, \ldots, n\}$ such that $L_j$ is globally hypoelliptic on $\mathbb{T}_{t_j}^{1}\times \mathbb{T}_x^N$, then the system \eqref{constant-system} is globally hypoelliptic.	
\end{corollary}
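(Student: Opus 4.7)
The plan is to reduce the corollary to the characterization provided by Theorem \ref{general-constant-system}. Applying that theorem to the single operator $L_j$ viewed as a one-equation system on $\mathbb{T}^{1}_{t_j}\times\mathbb{T}^N_x$ furnishes positive constants $C,M,R$ with
\[
|\widehat{L}_j(\tau_j,\xi)|\geq C\,\|(\tau_j,\xi)\|^{-M},\quad \|(\tau_j,\xi)\|\geq R.
\]
The goal is then to upgrade this single-operator lower bound to the full-system estimate \eqref{GW} for $\widehat{L}(\tau,\xi)=(\widehat{L}_1(\tau_1,\xi),\ldots,\widehat{L}_n(\tau_n,\xi))$, from which Theorem \ref{general-constant-system} (in the reverse direction) immediately yields global hypoellipticity.

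Since $\|\widehat{L}(\tau,\xi)\|\geq |\widehat{L}_i(\tau_i,\xi)|$ for every $i$, it suffices to exhibit \emph{some} index $i$ whose component satisfies the required polynomial lower bound. Fix $(\tau,\xi)$ with $\|(\tau,\xi)\|$ large and split into two cases. The easy case is $\|(\tau_j,\xi)\|\geq R$: the single-operator estimate and the trivial inequality $\|(\tau_j,\xi)\|\leq\|(\tau,\xi)\|$ give
\[
\|\widehat{L}(\tau,\xi)\|\geq|\widehat{L}_j(\tau_j,\xi)|\geq C\,\|(\tau,\xi)\|^{-M}.
\]

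The main obstacle lies in the complementary case $\|(\tau_j,\xi)\|<R$, where $L_j$ carries no information. Here $|\tau_j|$ and $\|\xi\|$ are both bounded by $R$, so by \eqref{bound-symb} the values $p_i(\xi)$ for $i\neq j$ lie in a finite set, hence are bounded by some constant $K$. On the other hand, since $\|(\tau,\xi)\|$ is large while $\tau_j$ and $\xi$ are bounded, there must exist $i\neq j$ with $|\tau_i|$ comparable to $\|(\tau,\xi)\|$ (up to the dimensional constant coming from norm equivalence). For such an $i$,
\[
|\widehat{L}_i(\tau_i,\xi)|=|\tau_i+p_i(\xi)|\geq|\tau_i|-K,
\]
which grows linearly in $\|(\tau,\xi)\|$ once $\|(\tau,\xi)\|$ is sufficiently large; in particular it dominates $C\,\|(\tau,\xi)\|^{-M}$.

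Combining the two cases furnishes constants $C',M',R'$ with $\|\widehat{L}(\tau,\xi)\|\geq C'\|(\tau,\xi)\|^{-M'}$ for $\|(\tau,\xi)\|\geq R'$, and Theorem \ref{general-constant-system} closes the argument. The only conceptual subtlety is recognizing that in the ``bad'' case the boundedness of $\xi$ automatically tames the zeroth-order terms $p_i(\xi)$, so that the remaining equations trivially supply the needed growth.
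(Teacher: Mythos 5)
Your proposal is correct and follows essentially the same route as the paper: apply Theorem \ref{general-constant-system} to the single operator $L_j$ and pass to the system via $|\widehat{L}_j(\tau_j,\xi)|\leq\|\widehat{L}(\tau,\xi)\|$. In fact you are more careful than the paper's two-line argument, since you explicitly handle the case $\|(\tau_j,\xi)\|<R$ with $\|(\tau,\xi)\|$ large (where some other $|\tau_i|$ must be large and $p_i(\xi)$ is bounded), a point the paper leaves implicit.
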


\begin{proof}
If $L_j$ is globally hypoelliptic, then there exist positive constants
$C_j$, $M_j$ and $R_j$ such that 
\begin{equation}\label{GW-j}
|\widehat{L}_j(\tau_j, \xi)| \geq C_j \|(\tau_j, \xi)\|^{-M_j}, \quad 
\|(\tau_j, \xi)\| \geq R_j.
\end{equation}
Hence, \eqref{GW} is obtained by combining  \eqref{GW-j} and the following  inequality
$$
|\widehat{L}_j(\tau_j, \xi)| \leq  \|\widehat{L}(\tau, \xi)\|, \quad (\tau, \xi) \in \mathbb{Z}^{n+N}.
$$
\end{proof}

\begin{corollary}\label{coro-W-finito-const}
Consider the sets $\mathcal{Z}_j = \{ \xi \in \mathbb{Z}^N; \, p_j(\xi) \in \mathbb{Z}\}$, $j=1, \ldots, n$. If  system \eqref{constant-system} is globally hypoelliptic, then $\mathcal{Z} = \bigcap_{j=1}^{n} \mathcal{Z}_j$ is finite. 
\end{corollary}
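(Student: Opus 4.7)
The plan is to argue by contrapositive: assume $\mathcal{Z}$ is infinite and show that the Greenfield--Wallach condition \eqref{GW} of Theorem \ref{general-constant-system} must fail, whence the system cannot be globally hypoelliptic. The key observation is that membership in $\mathcal{Z}$ gives us exact zeros of the symbol, not merely small values.

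Concretely, for each $\xi \in \mathcal{Z}$ every $p_j(\xi)$ is an integer, so I may define $\tau_j(\xi) \doteq -p_j(\xi) \in \mathbb{Z}$. Setting $\tau(\xi) = (\tau_1(\xi), \ldots, \tau_n(\xi))$, we obtain
\begin{equation*}
\widehat{L}_j(\tau_j(\xi), \xi) = \tau_j(\xi) + p_j(\xi) = 0, \quad j=1,\ldots,n,
\end{equation*}
so that $\widehat{L}(\tau(\xi), \xi) = 0$ for every $\xi \in \mathcal{Z}$. Because $\mathcal{Z} \subset \mathbb{Z}^N$ is assumed infinite and is locally finite in $\mathbb{Z}^N$, I can extract a sequence $\{\xi_\ell\}_{\ell \in \mathbb{N}} \subset \mathcal{Z}$ with $\|\xi_\ell\| \to \infty$. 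Then the associated lattice points $(\tau(\xi_\ell), \xi_\ell) \in \mathbb{Z}^{n+N}$ satisfy $\|(\tau(\xi_\ell), \xi_\ell)\| \geq \|\xi_\ell\| \to \infty$, yet $\|\widehat{L}(\tau(\xi_\ell), \xi_\ell)\| = 0$, which flatly contradicts the estimate \eqref{GW}.

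By Theorem \ref{general-constant-system} this rules out global hypoellipticity of system \eqref{constant-system}, proving the contrapositive and hence the corollary. I do not anticipate a serious obstacle here; the only step that deserves a sentence of care is the passage from ``$\mathcal{Z}$ infinite'' to ``a sequence of $\xi_\ell \in \mathcal{Z}$ with norms tending to infinity'', which is immediate since bounded subsets of $\mathbb{Z}^N$ are finite. Note that the symbol bound \eqref{bound-symb} is not needed for this direction; it is the vanishing of $\widehat{L}$ on the lattice points $(-p_1(\xi),\ldots,-p_n(\xi),\xi)$, $\xi \in \mathcal{Z}$, that drives the argument.
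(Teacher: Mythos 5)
Your argument is correct and is essentially identical to the paper's: both define $\tau_{j}(\xi)=-p_j(\xi)$ for $\xi\in\mathcal{Z}$ to produce exact zeros of $\widehat{L}$ along a sequence of lattice points with norms tending to infinity, contradicting condition \eqref{GW} of Theorem \ref{general-constant-system}. No issues.
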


\begin{proof}
If $\mathcal{Z}$ is  infinite, we may write $\mathcal{Z} = \{\xi_{\ell}\}_{\ell \in \mathbb{N}}$, where $\xi_{\ell}$ is an increasing sequence. Thus, let $\tau_{j\ell} \doteq - p_j(\xi_{\ell})$, for all $\ell \in \mathbb{N}$. It follows that
$$
\widehat{L}_j(\tau_{j\ell}, \xi_{\ell}) = \tau_{\ell} + p_j(\xi_{\ell}) = 0, \quad \ell \in \mathbb{N},
$$
for each  $j \in \{1, \ldots, n\}$, then $\widehat{L}(\tau_{1\ell}, \ldots, \tau_{n\ell}, \xi_{\ell}) = 0$, for all $\ell$ and, consequently, \eqref{GW} fails. 
\end{proof}

Proposition 3.3 in \cite{AGKM} proves that set $\mathcal{Z}=\{ \xi \in \mathbb{Z}^N; \, p(\xi) \in \mathbb{Z}\}$ being finite is a necessary condition to the global hypoellipticity of a single operator $L=D_t+P(D_x)$ on $\mathbb{T}^{1+N}$. However, the following example not only shows that the reciprocal of Corollary \ref{coro-const-system} is not true, but also exhibits a globally hypoelliptic system, where each $\mathcal{Z}_j$ in Corollary \ref{coro-W-finito-const} is infinite. 

\begin{example}\label{example const coeff}
	Consider the system $L_j = D_{t_j} + P_j(D_x)$,  $j=1,2$, where the symbols $p_j(\xi)$ are given as follows: 
	\begin{equation*}
	p_1(\xi)  =
	\left \{
	\begin{array}{l}
	\xi, \ \textrm{ if } \ \xi \ \textrm{ is odd} \\
	ia, \ \textrm{ if } \ \xi \ \textrm{ is even} 
	\end{array}
	\right.	
   \ \textrm{ and } \
   	p_2(\xi)  =
   \left \{
   \begin{array}{l}
   ib, \ \textrm{ if } \ \xi \ \textrm{ is odd} \\
   \xi, \ \textrm{ if } \ \xi \ \textrm{ is even}
   \end{array},
   \right.
	\end{equation*}
	where $a$ and $b$ are non-zero real constants.
	
	Hence,  
	$$
	\mathcal{Z}_1 = \{\xi \in \mathbb{Z}; \; \xi \ \textrm{is odd}\}
	\ \textrm{ and } \
	\mathcal{Z}_2 = \{\xi \in \mathbb{Z};\; \xi \ \textrm{is even}\},
	$$
	which implies $\mathcal{Z}_1\cap \mathcal{Z}_2 = \emptyset$ and 
	\begin{equation*}
	\widehat{L}(\tau_1,\tau_2,\xi) = 
	\left \{
	\begin{array}{l}
	(\tau_1 + \xi , \tau_2 + ib), \ \textrm{ if } \ \xi \ \textrm{ is odd}, \\
	(\tau_1 + ia , \tau_2 + \xi), \ \textrm{ if } \ \xi \ \textrm{ is even}. 
	\end{array}
	\right.
	\end{equation*}
	
	It follows that $\Im \widehat{L}(\tau_1,\tau_2,\xi) \neq 0$  for all $\xi \in \mathbb{Z}$, thus we may obtain a positive constant $C$ such that $\|\widehat{L}(\tau_1,\tau_2,\xi)\| \geq C$, for all $(\tau_1, \tau_2, \xi) \in \mathbb{Z}^3$. Thus, the system is globally hypoelliptic although each operator $L_j$ is not.
\end{example}

\begin{remark}\label{remark-berg}
The existence of globally hypoelliptic systems, where each ope\-ra\-tor $L_j$ is not globally hypoelliptic, was first	introduced by Bergamasco  in \cite{BERG99}. Precisely, from Example 4.9 in \cite{BERG99}, it follows  that there exist two Liouville numbers $\alpha$ and $\beta$ such that the system
$$
\mathbb{L} =
\left \{
\begin{array}{l}
L_1 = D_{t_1} - \alpha D_x, \\
L_2 = D_{t_2} - \beta D_x,
\end{array}
\right.
$$
is globally hypoelliptic on $\mathbb{T}^3$.
\end{remark}

\subsection{Homogeneous symbols \label{homo-symbols}}

By taking inspiration from work \cite{AGKM}, the main goal of this section is to investigate the global hypoellipticity of a special class of constant coefficient  systems on $\mathbb{T}^n_t \times \mathbb{T}_x^{1}$, where each  operator $P_j(D_x)$, defined on $\mathbb{T}^1$, has a homogeneous symbol of positive and rational order  $\kappa$, namely, 


$$
p_j(\xi)=|\xi|^{\kappa}p_j(\pm 1), \quad \forall \xi\in \mathbb{Z}_*.
$$

We will write
$$
p_j(1) = \alpha_j + i \beta_j \quad \textrm{and} \quad p_j(-1) = \widetilde{\alpha}_j + i \widetilde{\beta}_j,
$$ 
and $\alpha=(\alpha_{1},\ldots,\alpha_n)$, $\widetilde{\alpha}=(\widetilde{\alpha}_{1},\ldots,\widetilde{\alpha}_n)$, $\beta=(\beta_{1},\ldots,\beta_n)$ and $\widetilde{\beta}=(\widetilde{\beta}_{1},\ldots,\widetilde{\beta}_n)$. Therefore, the symbol of each operator $L_j$ in system \eqref{constant-system} can be written as follows 
\begin{equation}\label{eqq}
|\widehat{L}_j(\tau_j, \xi)| = |\tau_j+p_j(\xi)| = |\xi|^{\kappa} \cdot 
\left \{
\begin{array}{l}
\left| \dfrac{\tau_j}{|\xi|^{\kappa}} + (\alpha_j  + i \beta_j ) \right|,  \ \textrm{ if } \xi>0, \vspace{2mm} \\
\left| \dfrac{\tau_j}{|\xi|^{\kappa}} + (\widetilde{\alpha}_j   + i \widetilde{\beta}_j ) \right|, \ \textrm{ if } \ \xi<0.
\end{array}
\right.
\end{equation}

The idea here is to shed light on the  connections between global hypoellipticity and certain  simultaneous Diophantine approximations of real numbers by rationals with a common  denominator. To be more precise, we introduce the following definition.
\begin{definition}\label{def-SDA}	
	Let $\alpha=(\alpha_1,\ldots,\alpha_n)\in\mathbb{R}^n\setminus \mathbb{Q}^n$ be a vector and $\eta\in\mathbb{N}$. We say that $\alpha$ satisfies the condition $(SDA)_{\eta}$ if there exist a positive  constant $C$ and a sequence $(p_\ell,q_\ell)=(p_{1\ell},\ldots,p_{n\ell},q_\ell)\in\mathbb{Z}^n\times \mathbb{N}$ such that 
	\begin{equation}
	\max_{j=1,\dots,n}\left|\alpha_{j}^\eta-\dfrac{(p_{j\ell})^\eta}{q_\ell}\right|<  C q_{\ell}^{-\ell}, \quad \ell \in\mathbb{N}.
	\end{equation}
\end{definition}

It is important to point out that  several papers dealing with global properties such as solvability and hypoellipticity, for differential systems, are inte\-res\-ted in simultaneous Diophantine approximations, as the reader can see in  \cite{BERG99,BCM93,BdMZ12,BP,himonas99,Him01,moser}. The usual approach in these works is to consider Liouville vectors:
\begin{definition}[Liouville vector]
	We say that a vector  $\alpha=(\alpha_1,\ldots,\alpha_n)$ in $\mathbb{R}^n\setminus \mathbb{Q}^n$ is  Liouville if  there exist a positive  constant $C$ and a sequence $(p_\ell,q_\ell)=(p_{1\ell},\ldots,p_{n\ell},q_\ell)\in\mathbb{Z}^n\times \mathbb{N}$ such that
	
	\begin{equation}\label{Liouville vector}
	\max_{j=1,\dots,n}\left|\alpha_{j}-\dfrac{p_{j\ell}}{q_\ell}\right|<  C q_{\ell}^{-\ell}, \quad \ell \in\mathbb{N}.
	\end{equation}	
\end{definition}

We claim that a Liouville vector $\alpha\in\mathbb{R}^n\setminus \mathbb{Q}^n $ satisfies  $(SDA)_{\eta}$, for every $\eta\in\mathbb{N}$. Indeed, if $\eta=1$, it is obvious. In the other case, given a Liouville vector $\alpha\in\mathbb{R}^n\setminus \mathbb{Q}^n$, we may find  a positive  constant $C$ and a sequence $(p_\ell,q_\ell)=(p_{1\ell},\ldots,p_{n\ell},q_\ell)\in\mathbb{Z}^n\times \mathbb{N}$ satisfying
$$
\max_{j=1,\dots,n} \left|\alpha_j-\dfrac{p_{j\ell}}{q_\ell}\right|
\leq  C (q_\ell^{\eta} )^{-\ell}, \quad \ell \in\mathbb{N}.
$$

Thus, for each $j\in\{1,\ldots,n\}$ we have 
\begin{equation*}
\left|\alpha_j^{\eta}-\dfrac{(p_{j\ell})^{\eta}}{q_\ell^{\eta}}\right|= \left|\alpha_j-\dfrac{p_{j\ell}}{q_\ell}\right|\cdot \left|\sum_{k=1}^{\eta}\left(\dfrac{p_{j\ell}}{q_{\ell}}\right)^{\eta-k}\alpha_j^{k-1}\right|.
\end{equation*}

Since the sum in the right hand side of the above equation converges to $\eta\alpha_j^{\eta-1}$, there exists a positive constant $\widetilde{C}$ such that 
\begin{eqnarray*}
	\max_{j=1,\dots,n} \left|\alpha_j^{\eta}-\dfrac{(p_{j\ell})^{\eta}}{q_\ell^{\eta}}\right| \leq \widetilde{C} \max_{j=1,\dots,n} \left|\alpha_j-\dfrac{p_{j\ell}}{q_\ell}\right|
	\leq \widetilde{C} C (q_\ell^{\eta} )^{-\ell}, \quad \ell \in\mathbb{N}.
\end{eqnarray*}

The reciprocal of the previous claim is not true and the next example is an illustration. 

\begin{example}\label{SDA example}
	Let $\mathscr{L}=\sum_{k=1}^{\infty}10^{-k!}$ be  the Liouville constant. Given an integer $\eta\geq 2$, we have that $3/2 \mathscr{L}^\eta$ is a Liouville number, while $\sqrt[\eta]{3/2}\mathscr{L}$ is not (see \cite{Marques}). Thus, there exists  a sequence $(p_\ell,q_\ell)\in\mathbb{N}^2$, $q_\ell\rightarrow +\infty$,  such that 
	$$
	\left|\dfrac{3}{2}\mathscr{L}^{\eta}-\dfrac{p_\ell}{q_\ell}\right|< q_\ell^{-2\eta \ell}< q_{\ell}^{-\ell}, \quad \ell \in\mathbb{N}.
	$$
	Hence, $|3/2\mathscr{L}^{\eta} q_\ell-p_\ell|<q_{\ell}^{1-\ell}$ which implies 
	$p_\ell<q_\ell(3/2\mathscr{L}^{\eta}+q_\ell^{-\ell})$,
	for all $\ell \in\mathbb{N}$. Therefore, 
	$$
	2^{\eta}p_\ell^{\eta-1}q_\ell< 2^{\eta}(3/2\mathscr{L}^{\eta}+q_{\ell}^{-\ell})^{\eta-1}q_\ell^{\eta}\leq q_\ell^{\eta} q_\ell^{\eta}=q_\ell^{2\eta},
	$$ for $\ell$ sufficiently large.
	
	Now, consider the following  non Liouville vector $\alpha=(\sqrt[\eta]{3/2}\mathscr{L},\sqrt[\eta]{3\cdot2^{\eta-1}}\mathscr{L})$ in $\mathbb{R}^2\setminus\mathbb{Q}^2$. 
	Thus, we obtain 
	$$
	\left|\dfrac{3}{2}\mathscr{L}^{\eta} -\dfrac{p_\ell}{q_\ell}\right|< q_\ell^{-2\eta\ell}\; \text{and}\; \left|3\cdot2^{\eta-1}\mathscr{L}^{\eta} -\dfrac{2^{\eta}p_\ell}{q_\ell}\right|< 2^{\eta} q_\ell^{-2\eta\ell},
	$$
	or equivalently
	$$
	\left|\dfrac{3}{2}\mathscr{L}^{\eta} -\dfrac{2^{\eta} p_\ell^{\eta}}{2^{\eta}p_\ell^{\eta-1} q_\ell}\right|< q_\ell^{-2\eta\ell}\;\text{and}\; \left|3\cdot2^{\eta-1}\mathscr{L}^{\eta} -\dfrac{2^{2\eta}p_\ell^{\eta}}{2^{\eta}p_\ell^{\eta-1} q_\ell}\right|< 2^{\eta} q_\ell^{-2\eta\ell}.
	$$
	
	Finally, the previous argumentation implies  
	$$
	\max \left\{ \left|\dfrac{3}{2}\mathscr{L}^{\eta} -\dfrac{(2 p_\ell)^{\eta}}{2^{\eta}p_\ell^{\eta-1} q_\ell}\right|,
	\left|3\cdot2^{\eta-1}\mathscr{L}^{\eta} -\dfrac{(4p_\ell)^{\eta}}{2^{\eta}p_\ell^{\eta-1} q_\ell}\right| \right\}
	< 2^{\eta} (2^{\eta}p_\ell^{\eta-1} q_\ell)^{-\ell},
	$$
	for $\ell$ large enough. We conclude  that $\alpha$ satisfies $(SDA)_\eta$.
\end{example}

\begin{theorem}\label{homogeneous hypoellipticity}
	Admit that each symbol   $p_j=p_j(\xi)$ is positively homogeneous  of order $\kappa=\rho/\eta$, where  $\rho,\eta\in\mathbb{N}$ and $\gcd(\rho,\eta)=1$. Then, the system 
	$$ 
	L_j=D_{t_j}+P_j(D_x),\quad j=1,\ldots,n,
	$$
	defined on $\mathbb{T}_t^n\times \mathbb{T}_x^1$
	is globally hypoelliptic if and only if  $\alpha\in\mathbb{R}^n$ is neither rational nor satisfies $(SDA)_{\eta}$, whenever $\beta=0$. 
	Additionally, the same is true for $\widetilde{\alpha}$ and $\widetilde{\beta}$.	
\end{theorem}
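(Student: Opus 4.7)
The plan is to apply Theorem \ref{general-constant-system} and determine when the Greenfield--Wallach bound \eqref{GW} can fail. First I would split the frequency space by the sign of $\xi$: for $\xi = 0$ the symbol $\widehat{L}_j(\tau_j,0) = \tau_j + p_j(0)$ is comparable to $\|\tau\|$ at infinity, so \eqref{GW} is automatic; from \eqref{eqq}, for $\xi > 0$ the analysis involves $(\alpha,\beta)$ while for $\xi < 0$ the analogous role is played by $(\widetilde{\alpha},\widetilde{\beta})$. It thus suffices to prove the equivalence in the half $\xi > 0$.

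For sufficiency, assume either $\beta \neq 0$ or $\beta = 0$ with $\alpha$ irrational and not satisfying $(SDA)_{\eta}$. If some $\beta_j \neq 0$, then \eqref{eqq} gives $|\widehat{L}_j(\tau_j,\xi)| \geq |\beta_j|\,\xi^{\kappa}$ with $\kappa > 0$, which (combined with the trivial estimate for bounded $\xi$ and large $\|\tau\|$) yields \eqref{GW}. If $\beta = 0$, I would argue by contradiction: suppose \eqref{GW} fails along $(\tau_\ell, \xi_\ell)$ with $\xi_\ell \to +\infty$ and $\max_j|\tau_{j\ell} + \alpha_j \xi_\ell^{\rho/\eta}| < \|(\tau_\ell,\xi_\ell)\|^{-\ell}$. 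Applying the identity
\[
a^{\eta} - b^{\eta} \;=\; (a-b)\sum_{k=0}^{\eta-1} a^{\eta-1-k}b^{k}
\]
with $a = \alpha_j$ and $b = -\tau_{j\ell}/\xi_\ell^{\rho/\eta}$ (the sum being bounded as $b \to \alpha_j$) gives
\[
\max_j\Bigl|\alpha_j^{\eta} - \frac{(-\tau_{j\ell})^{\eta}}{\xi_\ell^{\rho}}\Bigr| \;\lesssim\; \xi_\ell^{-\rho/\eta}\,\|(\tau_\ell,\xi_\ell)\|^{-\ell}.
\]
Setting $(p_{j\ell}, q_\ell) := (-\tau_{j\ell}, \xi_\ell^{\rho})$ and using $\|(\tau_\ell,\xi_\ell)\| \geq \xi_\ell = q_\ell^{1/\rho}$, the right-hand side decays super-polynomially in $q_\ell$; extracting a subsequence then produces witnesses for $(SDA)_{\eta}$, contradicting the hypothesis.

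For necessity, I would construct explicit sequences violating \eqref{GW} when $\beta = 0$ and either $\alpha \in \mathbb{Q}^n$ or $\alpha$ satisfies $(SDA)_{\eta}$. In the rational case, writing $\alpha_j = r_j/q$ with common denominator $q$, the sequence $\xi_\ell = (q\ell)^{\eta}$ makes $\alpha_j \xi_\ell^{\rho/\eta} = r_j q^{\rho-1}\ell^{\rho} \in \mathbb{Z}$, so taking $\tau_{j\ell}$ to be the negation of this integer yields exact zeros of $\widehat{L}$. In the $(SDA)_{\eta}$ case, I would exploit $\gcd(\rho,\eta) = 1$ to fix $s_0 \in \{1,\ldots,\rho\}$ with $s_0\eta \equiv -1 \pmod{\rho}$ and set $k := (s_0\eta + 1)/\rho \in \mathbb{N}$, so that $\xi_\ell := q_\ell^{k}$ satisfies $\xi_\ell^{\rho/\eta} = q_\ell^{s_0}\,q_\ell^{1/\eta}$. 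Applying the same factorization with $a = |\alpha_j|q_\ell^{1/\eta}$, $b = |p_{j\ell}|$ converts the $(SDA)_{\eta}$ estimate into $\bigl||\alpha_j|q_\ell^{1/\eta} - |p_{j\ell}|\bigr| \lesssim q_\ell^{-\ell + 1/\eta}$. Choosing signs $\epsilon_{j\ell}\in\{\pm 1\}$ so that $\epsilon_{j\ell}|p_{j\ell}|$ matches the sign of $\alpha_j q_\ell^{1/\eta}$ (necessary when $\eta$ is even, since then $p_{j\ell}^\eta$ loses the sign of $p_{j\ell}$) and taking $\tau_{j\ell} = -\epsilon_{j\ell}|p_{j\ell}|q_\ell^{s_0}$ gives $|\widehat{L}_j(\tau_{j\ell},\xi_\ell)| \lesssim q_\ell^{s_0 + 1/\eta - \ell}$, which decays super-polynomially compared with $\|(\tau_\ell,\xi_\ell)\|$ (polynomial in $q_\ell$), violating \eqref{GW}.

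The main technical obstacle will be the algebraic bookkeeping in the $(SDA)_{\eta}$ necessity argument: using $\gcd(\rho,\eta) = 1$ to produce integer exponents so that $\xi_\ell = q_\ell^{k}$ is a genuine positive integer whose fractional power $\xi_\ell^{\rho/\eta}$ cleanly isolates the factor $q_\ell^{1/\eta}$, and tracking signs carefully when $\eta$ is even, since a mismatched sign would invalidate the factorization bound. Both implications rely on the same algebraic identity $a^{\eta} - b^{\eta} = (a-b)\sum a^{\eta-1-k}b^{k}$, used to pass back and forth between approximations of $\alpha_j$ and of $\alpha_j^{\eta}$.
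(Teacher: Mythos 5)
Your argument is correct and follows essentially the same route as the paper: the Greenfield--Wallach characterization (Theorem \ref{general-constant-system}), the factorization $a^{\eta}-b^{\eta}=(a-b)\sum_{k}a^{\eta-1-k}b^{k}$ to pass between Diophantine approximations of $\alpha_j$ and of $\alpha_j^{\eta}$, and a B\'ezout relation coming from $\gcd(\rho,\eta)=1$ to manufacture integer frequencies from the denominators $q_\ell$ (your $k\rho-s_0\eta=1$ plays exactly the role of the paper's $\widetilde{p}\rho-\widetilde{q}\eta=1$, with $\xi_\ell=q_\ell^{k}$ versus $\xi_\ell=s_\ell^{\widetilde{p}}$). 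The only differences are presentational: in the $(SDA)_{\eta}$ direction you construct an explicit sequence violating \eqref{GW} rather than arguing by contradiction from \eqref{GW}, and you are in fact more careful than the paper about the sign bookkeeping for even $\eta$.
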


\begin{proof} 
	If $\beta\neq 0$ and $\widetilde\beta\neq0$, then  condition \eqref{GW} is trivially obtained. Thus, we start by considering the case $\beta=0 $ and $\widetilde\beta\neq0$. 
	
	If $\alpha\in\mathbb{Q}^n$, by taking a positive integer $q$ such that $q \alpha\in\mathbb{Z}^n$ we have  that $(q \xi)^\eta\in \mathcal{Z}=\bigcap_{j=1}^n \mathcal{Z}_j$, for all $\xi\in \mathbb{N}$. Then, it follows from Corollary \ref{coro-W-finito-const} that the system is not globally hypoelliptic.
	
	If $\alpha\in\mathbb{R}^n\setminus\mathbb{Q}^n$, by combining Greenfield's and Wallach's condition  \eqref{GW} and \eqref{eqq}, we may consider the following equivalence:  the system is globally hypoelliptic if and only if  there exist positive constants $C,$ $M$ and $R$ such that
	\begin{equation}\label{eqqq1}
	\max_{j=1,\ldots,n}\left|\dfrac{\tau_j}{\xi^{\kappa}}+\alpha_j\right|\geq C(|\tau|+\xi)^{-M},
	\end{equation}
	for all $(\tau,\xi)=(\tau_1,\ldots,\tau_n,\xi)\in\mathbb{Z}^n\times\mathbb{N}$ such that $|\tau|+\xi\geq R.$

	Note that we can write 
	\begin{equation}\label{nice equality}
	\left|\dfrac{(\tau_j)^\eta}{\xi^{\rho}}+\alpha_j^\eta\right|= \left|\dfrac{\tau_j}{\xi^{\kappa}}+\alpha_j\right|\cdot \left|\sum_{m=1}^{\eta}\left(\dfrac{\tau_j}{\xi^{\kappa}}\right)^{\eta-m}\alpha_j^{m-1}\right|,
	\end{equation}
	for all $(\tau,\xi)=(\tau_1,\ldots,\tau_n,\xi)\in\mathbb{Z}^n\times \mathbb{N}$ and all $j=1,\ldots,n$.

	If the system is not globally hypoelliptic,
	it follows from \eqref{eqqq1} that there exists a sequence $(\tau_{\ell},\xi_{\ell})=(\tau_{1\ell},\ldots,\tau_{n\ell},\xi_{\ell})\in\mathbb{Z}^n\times\mathbb{N}$ such that
	\begin{equation*}
	\max_{j=1,\ldots,n}\left|\dfrac{\tau_{j\ell}}{\xi_{\ell}^{\kappa}}+\alpha_j\right|<(|\tau_{\ell}|+\xi_{\ell})^{-\rho\ell},\;\; |\tau_\ell|+\xi_\ell\geq \ell.
	\end{equation*}
	
	For each $j=1,\ldots,n$ we observe that  $\sum_{m=1}^{\eta}\left|({\tau_{j\ell}}/{\xi_{\ell}^{\kappa}})^{\eta-m}\alpha_j^{m-1}\right|$ converges to $\eta|\alpha_j|^{\eta-1}$ as $\ell$ goes to infinity. Consequently,  this sum is bounded by a constant  $C_j>0$. If $C=\max\{C_j,\;j=1,\ldots,n\}$, then the previous inequality together with \eqref{nice equality} imply    
	\begin{eqnarray*}
		\max_{j=1,\ldots,n}\left|\dfrac{(\tau_{j\ell})^{\eta}}{\xi_{\ell}^{\rho}}+\alpha_j^\eta\right|
		\leq C (|\tau_{\ell}|+\xi_{\ell})^{-\rho\ell}\leq C (\xi_{\ell}^{\rho})^{-\ell} ,
	\end{eqnarray*}
	for all $\ell\in \mathbb{N}$. Therefore,  $\alpha$ satisfies  $(SDA)_\eta$.\\
	
	Assume now that $\alpha$ satisfies  $(SDA)_\eta$. Thus, there exist $C_0>0$ and a sequence $(r_\ell,s_\ell)=(r_{1\ell},\ldots,r_{n\ell},s_\ell)\in\mathbb{Z}^n\times\mathbb{N}$ satisfying the following inequality
	$$
	\max_{j=1,\ldots, n}	\left|\dfrac{(r_{j\ell})^ {\eta}}{s_\ell}+\alpha_j^{\eta}\right|\leq C_0 (|r_\ell|+s_\ell)^{-\ell},\quad \; \ell\in\mathbb{N}.
	$$		
	
	We claim that the system is not  globally hypoelliptic. Indeed, suppose otherwise by contradiction. 
	Since gcd$(\rho,\eta)=1$, there exist $\widetilde{p}, \widetilde{q}\in\mathbb{N}$ such that $\widetilde{p}\rho-\widetilde{q}\eta=1$. Then, for each $j=1,\ldots,n$, the equality bellow follows from  \eqref{nice equality}:
	\begin{align*}
	\left|\dfrac{(r_{j\ell})^{\eta}}{s_\ell}+\alpha_j^{\eta}\right | = \left|\dfrac{(r_{j\ell} s_{\ell}^{\widetilde{q}})^{\eta}}{s_\ell^{\rho\widetilde{p}}}+\alpha_j^{\eta}\right|  
	&= \left|\dfrac{r_{j\ell} s_{\ell}^{\widetilde{q}}}{(s_{\ell}^{\widetilde{p}})^{\kappa}}+\alpha_j\right| \cdot  \left|\sum_{m=1}^{\eta} \left(\dfrac{r_{j\ell} s_{\ell}^{\widetilde{q}}}{(s_{\ell}^{\widetilde{p}})^{\kappa}}\right)^{\eta-m}\alpha_j^{m-1}\right| \\
	& = \left|\dfrac{r_{j\ell} s_{\ell}^{\widetilde{q}}}{(s_{\ell}^{\widetilde{p}})^{\kappa}}+\alpha_j\right| \cdot  \left|\sum_{m=1}^{\eta} \left(\dfrac{(r_{j\ell})^{\eta} }{s_{\ell}}\right)^{({\eta-m})/\eta}\alpha_j^{m-1}\right|. 
	\end{align*}
	
	Since $(r_{j\ell})^{\eta}/{s_\ell}$ converges to $\alpha_j^\eta$, for $\alpha_{j}\neq 0$ there exists a small $\varepsilon_j>0$ such that 
	$$
	\left|\sum_{m=1}^{\eta} \left(\dfrac{(r_{j\ell})^{\eta} }{s_{\ell}}\right)^{({\eta-m})/\eta}\alpha_j^{m-1}\right|>\eta|\alpha_j|^{\eta-1}- \varepsilon_j >0,
	$$ 
	for large enough $\ell$. On the other hand, if $\alpha_{j}=0$, then $r_{j\ell}=0$ for large enough $\ell$.
	
	We conclude that 
	\begin{align*}
	\widetilde{C} \max_{j=1,\ldots, n}\left|\dfrac{(r_{j\ell}) ^{\eta}}{s_\ell}+\alpha_j^{\eta}\right | 
	\geq \max_{j=1,\ldots, n}\left|\dfrac{r_{j\ell} s_{\ell}^{\widetilde{q}}}{(s_{\ell}^{\widetilde{p}})^{\kappa}}+\alpha_j\right|,
	\end{align*}
	where  $\widetilde{C}=\max_{\alpha_{j}\neq 0}\left(\eta|\alpha_j|^{\eta-1}-\varepsilon_j\right)^{-1}>0$ and $\ell$ is large enough.  
	
	Finally, the previous inequality together with  \eqref{eqqq1} imply that  
	\begin{align*}
	{C_0\widetilde{C}}(|r_\ell|+s_\ell)^{-\ell}&\geq C (|r_{1\ell} s_{\ell}^{\widetilde{q}}|+\cdots+ |r_{n\ell} s_{\ell}^{\widetilde{q}}|+ s_{\ell}^{\widetilde{p}})^{-M}\\
	&= C (s_{\ell}^{\widetilde{q}}|r_\ell|+s_{\ell}^{\widetilde{p}})^{-M}.
	\end{align*}
	
	Hence
	$$
	\dfrac{(|r_\ell|+s_\ell)^{\ell}}{(s_{\ell}^{\widetilde{q}}|r_\ell|+s_{\ell}^{\widetilde{p}})^M}
	\leq
	\frac{C_0\widetilde{C}}{C},
	$$
	for large enough $\ell$, which is a contradiction since the sequence in the left hand side of the above inequality goes to infinity.\\

	The case in which ${\beta}\neq 0$ and   $\widetilde{\beta}=0$ is completely analogous to the previous one. If ${\beta}= 0$ and   $\widetilde{\beta}=0$ we have that the system is globally hypoelliptic if and only if $\alpha$ and $\widetilde{\alpha}$ belong to $\mathbb{R}^n\setminus\mathbb{Q}^n$ and both do not satisfy $(SDA)_\eta$. The proof in this case will be omitted, since it can be obtained by a slight modification in the previous arguments.
	
\end{proof}

The following example is an immediate consequence of Theorem \ref{homogeneous hypoellipticity}.

\begin{example} Let $\eta\geq 2$ be an integer. 
	Consider the vector $\alpha\in\mathbb{R}^2\setminus\mathbb{Q}^2$ obtained in Example \ref{SDA example}. Thus, the system 
	$$
	L_j=D_{t_j}+\alpha_{j}(D_x^{2})^{1/2},\quad j=1,2,
	$$
	is globally hypoelliptic on $\mathbb{T}_t^2\times\mathbb{T}_x^1$, while the system
	$$
	L_j=D_{t_j}+\alpha_{j}(D_x^{2})^{1/2\eta},\quad j=1,2,
	$$
	is not.
\end{example}

\section{Systems with variable coefficients}\label{Systems with variable coefficients}
We now begin to study the global hypoellipticity of system  \eqref{system}, which has variable coefficients. Often, we will denote this system by $\mathbb{L}$.

The key point in our analysis is the study  of the behavior of the functions
\begin{equation}\label{functions-M-j}
t_{j} \in \mathbb{T}^1 \mapsto \mathcal{M}_j(t_j, \xi) = c_j(t_j)p_j(\xi),\quad \xi \in \mathbb{T}^N,
\end{equation}
and its averages
$\mathcal{M}_{j0}(\xi) = c_{j0}p_j(\xi)$, where 
$$
c_{j0} = (2\pi)^{-1} \int_{0}^{2\pi}c_j(s) ds.
$$

Based on these notations we consider the following sets 
\begin{equation}\label{Z sets}
\mathcal{Z}_j = \{\xi \in \mathbb{Z}^N; \ \mathcal{M}_{j0}(\xi) \in \mathbb{Z} \}
\textrm{ and } \mathcal{Z} = \bigcap_{j=1}^{n} \mathcal{Z}_j.
\end{equation}

We will present some connections between the global hypoellipticity of system  $\mathbb{L}$ and the global hypoellipticity of its \textit{normal form} $\mathbb{L}_0$ defined in \eqref{normal-form}.

In order to better present these connections, we separate our results in two subsections: Necessary conditions \ref{sec necessary conditions} and Sufficient conditions \ref{sec-suff-cond}.  Before that,  we recall the following results:

\begin{theorem}[See \cite{BCM93}]\label{The-diff-system}
	Consider the system 
	\begin{equation}\label{system-dfifferential}
	L_j = D_{t_j} + (a_j(t_j)+ ib_j(t_j)) D_x, \quad j=1,\ldots,n,
	\end{equation}	
defined on $\mathbb{T}_t^n
\times \mathbb{T}_x^1$ and set 
	$$
	J=\{j\in\{1,\ldots,n\};\; b_j\equiv0\}=\{j_1< \ldots< j_k\}.
	$$  	
	Then, the system \eqref{system-dfifferential} is globally hypoelliptic if and only if at least one of the follow\-ing conditions holds:	
	\begin{enumerate}
		\item[a)] There exists $b_j\not\equiv 0$ that does not change sign; 
		\item[b)] If $J\neq \emptyset$, then $(a_{j_10}, \ldots, a_{j_k0})$ is neither rational nor a Liouville vector.
	\end{enumerate}
	
\end{theorem}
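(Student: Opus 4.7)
The plan is to reduce to constant coefficients via partial Fourier analysis in $x$. Writing $\hat{u}(t,\xi)$ for the $\xi$-th Fourier coefficient in $x$, the equations $L_j u = f_j$ become a family of overdetermined ODE systems on $\mathbb{T}^n_t$ indexed by $\xi \in \mathbb{Z}$:
$$
D_{t_j}\hat{u}(t,\xi) + (a_j(t_j) + ib_j(t_j))\xi\, \hat{u}(t,\xi) = \hat{f}_j(t,\xi),\quad j=1,\ldots,n.
$$
Global hypoellipticity is equivalent to showing that super-polynomial decay of $\hat{f}_j(\cdot,\xi)$ in $\xi$ (uniformly in $t$) yields the same decay for $\hat{u}(\cdot,\xi)$. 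My main tool is the conjugation $\hat{u} = \exp(-i\xi \sum_j A_j(t_j) + \xi \sum_j B_j(t_j))\,\hat{v}$, where $A_j$ and $B_j$ are the primitives of $a_j - a_{j0}$ and $b_j - b_{j0}$; this reduces the system to the normal form $D_{t_j}\hat{v} + (a_{j0}+ib_{j0})\xi\, \hat{v} = \tilde{f}_j$, but the amplitude $e^{\xi \sum_j B_j(t_j)}$ is only uniformly controlled (in $\xi$) when every $B_j$ is constant, i.e. $b_j \equiv 0$.

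For the sufficiency under (a), I would assume without loss of generality that $b_{j_0} \geq 0$ with $b_{j_0} \not\equiv 0$, so that $b_{j_0 0} > 0$. The single equation $L_{j_0}\hat{u}(\cdot,\xi) = \hat{f}_{j_0}(\cdot,\xi)$ on $\mathbb{T}^1_{t_{j_0}}$ admits an explicit periodic solution, and the denominator $|e^{2\pi\xi(ia_{j_0 0} - b_{j_0 0})} - 1|$ is bounded below by $1/2$ for $|\xi|$ large; integrating the resulting formula against $e^{-\xi B_{j_0}}$ yields super-polynomial decay of $\hat{u}(\cdot,\xi)$ from that of $\hat{f}_{j_0}(\cdot,\xi)$, hence global hypoellipticity of the whole system. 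For the sufficiency under (b) when (a) fails, I would restrict attention to the sub-system with $j \in J$. Since $b_j \equiv 0$ for these indices, the relevant part of the conjugation factor is purely oscillatory and hence an automorphism of both $\mathcal{D}'(\mathbb{T}^{n+1})$ and $C^\infty(\mathbb{T}^{n+1})$; the problem reduces to the constant-coefficient sub-system $D_{t_j} + a_{j0} D_x$, $j \in J$, and Theorem \ref{general-constant-system} together with the non-Liouville, irrational hypothesis on $(a_{j_\cdot 0})$ delivers the required estimate $\max_j|\tau_j + a_{j0}\xi| \geq C(|\tau|+|\xi|)^{-M}$.

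For necessity, suppose both (a) and (b) fail. If $(a_{j_1 0},\ldots,a_{j_k 0})$ is rational, I would find $\xi_0 \in \mathbb{Z}$ with $a_{j_\cdot 0}\xi_0 \in \mathbb{Z}$ for every $j_\cdot \in J$; the kernel of the normal form then contains arbitrary trigonometric monomials in the $J$-directions and one pulls back a non-smooth element through the (bounded) conjugation. If the vector is Liouville, the approximating sequence $(p_\ell, q_\ell)$ provides a formal series $u = \sum_\ell c_\ell\, e^{i(\tau_\ell, q_\ell)\cdot(t,x)}$ whose images $L_j u$ have super-polynomially decaying Fourier coefficients while $u$ itself does not. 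To carry the construction across to indices $j \notin J$, one uses the fact that every $b_j \not\equiv 0$ changes sign to produce, via a Baouendi--Treves type argument, bounded periodic homogeneous solutions of $D_{t_j} + (a_j + ib_j)\xi$ that can be tensored in without spoiling smoothness of the right-hand sides.

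\textbf{Main obstacle.} The delicate point is this last step: the sign-change hypothesis on the non-identically-zero $b_j$ must be exploited to construct bounded (in $\xi$) periodic solutions of the single-variable homogeneous equations, and these must be combined compatibly with the Liouville approximation coming from the $J$-directions. This is the classical hard part of \cite{BCM93}, requiring a careful analysis of the characteristic set of $b_j$ and its interplay with the period-average $b_{j0}$.
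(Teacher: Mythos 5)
First, a point of reference: the paper does not prove this statement at all --- Theorem \ref{The-diff-system} is quoted from \cite{BCM93} as a known result and is used as a black box. So the comparison can only be against the methods of that reference and against the analogous arguments the paper develops for its pseudo-differential generalizations (Theorem \ref{necessary condition}, Propositions \ref{general-suff-cond}--\ref{pro-super-log}, and the conjugation $\Psi$ of Section \ref{sec-reduction}). Your overall roadmap (partial Fourier series in $x$, the unimodular conjugation $e^{-i\xi\sum_{j\in J}A_j}$ reducing the $J$-part to the constant system and Greenfield--Wallach, explicit Duhamel formulas for the remaining indices) is the right one and is consistent with those methods. Your treatment of sufficiency under (b) is essentially correct.

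There are, however, two genuine gaps. For sufficiency under (a), the lower bound on the denominator $|e^{2\pi\xi(ia_{j_00}-b_{j_00})}-1|$ is not where the sign condition enters, and ``integrating the resulting formula against $e^{-\xi B_{j_0}}$'' would fail: $B_{j_0}$ is a nonconstant zero-mean primitive, so $e^{\pm\xi B_{j_0}}$ is exponentially large in $\xi$ somewhere on $\mathbb{T}^1$ for every sign of $\xi$, and this multiplication is not bounded on $\mathcal{D}'$ or $C^\infty$ (you yourself note this in your second paragraph, then use it anyway). What the sign condition actually controls is the Duhamel kernel: one must use the representation $\int_0^{2\pi}\exp(-i\xi\int_{t-\tau}^{t}c_{j_0})\,\widehat f\,d\tau$ for one sign of $\xi$ and the equivalent representation $\int_0^{2\pi}\exp(i\xi\int_{t}^{t+\tau}c_{j_0})\,\widehat f\,d\tau$ for the other, so that the exponent $\pm\xi\int b_{j_0}$ is always $\leq 0$; with the wrong choice the kernel is of size $e^{2\pi|b_{j_00}\xi|}$ even though the denominator is harmless. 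This is exactly the content of Proposition \ref{prop-hormander-system} and Corollary \ref{coro-nao-muda-sign}. Second, and more seriously, the necessity direction --- the actual hard half of the theorem --- is not proved but only named as an obstacle. When $c_{j0}\xi\notin\mathbb{Z}$ there are \emph{no} exact periodic homogeneous solutions of $\partial_{t_j}+i c_j(t_j)\xi$, so the ``bounded periodic homogeneous solutions'' you propose to tensor in do not exist; one needs bounded solutions of $L_j\widehat u_j=\widehat g_j$ with $\widehat g_j$ rapidly decaying, built from a cut-off supported away from the maximizers of $\int_0^{t}\Im\mathcal M_j$ and from the smallness of $1-e^{-2\pi i\mathcal M_{j0}(\xi_\ell)}$ along the Liouville/rational sequence, as in the proof of Theorem \ref{necessary condition}; and in the case $J=\emptyset$ with every $b_j$ changing sign one must additionally run the Treves-type concentration argument at a sign change. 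Until that construction is carried out, the ``only if'' direction remains unproved.
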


\begin{proposition}\label{prop-smooth}
	Let $u \in \mathcal{D}'(\mathbb{T}^{n+N})$ be a distribution and  its $x$-Fourier coeffi\-cients
	\begin{equation*}
	\widehat{u}(\cdot, \xi) = (2 \pi)^{-n} <u(\cdot, x),  e^{-i x\xi}>,  \quad
	\xi \in \mathbb{Z}^N.
	\end{equation*}
	
	Given a sequence $\{c_{\xi}(\cdot)\}_{\xi \in \mathbb{Z}^N}$ of smooth functions on $\mathbb{T}^n$, the formal series $u = \sum_{\xi \in \mathbb{Z}^{N}}{c_\xi(t) e^{i  x \cdot \xi}}$ 	converges in $\mathcal{D}'(\mathbb{T}^{n+N})$ if and only if for any $\alpha \in \mathbb{Z}_+^n$ there exist positive constants $C$, $M$ and $R$ such that
	\begin{equation}\label{part-smooth-coef}
	|\partial^{\alpha}_t c_{\xi}(t)| \leq C \|\xi\|^{-M}, \quad \|\xi\|\geq R.
	\end{equation}
	Moreover, $u \in C^{\infty}(\mathbb{T}^{n+N})$ if and only if \eqref{part-smooth-coef} holds true for every $M>0$. In both cases, 
	$c_{\xi}(\cdot) = \widehat{u}(\cdot, \xi)$.
\end{proposition}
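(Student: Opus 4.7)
The plan is to reduce both implications to Proposition \ref{prop-smooth-const} applied to the full Fourier expansion on $\mathbb{T}^{n+N}$. The bridge is the identification $\widehat{u}(\tau,\xi) = \widehat{c_\xi}(\tau)$. To justify it, I would use that each $c_\xi$ is smooth, so its $t$-Fourier series $c_\xi(t)=\sum_\tau \widehat{c_\xi}(\tau) e^{it\cdot\tau}$ converges in $C^{\infty}(\mathbb{T}^n)$; substituting this into the formal series displays the full Fourier expansion of $u$. The equality $c_\xi = \widehat u(\cdot,\xi)$ then follows by pairing the series with $\phi(t) e^{-ix\cdot\eta}$ and using orthogonality of the exponentials in $x$.

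For the sufficiency direction, given $\varphi\in C^{\infty}(\mathbb{T}^{n+N})$ I would rewrite
\[
\int_{\mathbb{T}^{n+N}} c_\xi(t) e^{ix\cdot\xi}\,\varphi(t,x)\,dt\,dx \;=\; (2\pi)^N \int_{\mathbb{T}^n} c_\xi(t)\,\widehat{\varphi}(t,-\xi)\,dt,
\]
where the partial Fourier transform $\widehat\varphi(t,\eta)$ in $x$ is rapidly decreasing in $\eta$, uniformly with all $t$-derivatives. Combined with the case $\alpha=0$ of the hypothesis, this yields absolute convergence of the $\xi$-series and hence $\mathcal{D}'$-convergence. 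For the $C^{\infty}$ statement, writing $\partial_t^\beta \partial_x^\gamma[c_\xi(t) e^{ix\cdot\xi}] = (i\xi)^\gamma (\partial_t^\beta c_\xi)(t) e^{ix\cdot\xi}$ and using the rapid-decay hypothesis on $\partial_t^\beta c_\xi$ gives uniform convergence of every partial derivative.

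For the necessity direction, Proposition \ref{prop-smooth-const} applied to the full Fourier coefficients $\widehat{c_\xi}(\tau) = \widehat u(\tau,\xi)$ provides a bound of the form $|\widehat{c_\xi}(\tau)| \leq C\|(\tau,\xi)\|^{-M}$ for appropriate $M$ (for some $M$ in the $\mathcal{D}'$ case; for every $M>0$ in the $C^{\infty}$ case). To transfer this to the pointwise estimate claimed for $\partial_t^\alpha c_\xi(t)$, I would write
\[
\partial_t^\alpha c_\xi(t) \;=\; \sum_{\tau\in\mathbb{Z}^n} (i\tau)^\alpha\, \widehat{c_\xi}(\tau)\, e^{it\cdot\tau}
\]
and split the $\tau$-sum into the regions $|\tau|\leq\|\xi\|$ and $|\tau|>\|\xi\|$. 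In the first region, $\|(\tau,\xi)\|\geq\|\xi\|$ and the number of lattice points is $\lesssim \|\xi\|^n$, giving a contribution of order $\|\xi\|^{|\alpha|+n-M}$. In the second region, $\|(\tau,\xi)\|\geq|\tau|$ and the tail $\sum_{|\tau|>\|\xi\|}|\tau|^{|\alpha|-M}$ converges (once $M-|\alpha|>n$) with the same order. Combining both regions produces $|\partial_t^\alpha c_\xi(t)| \leq C\|\xi\|^{-M'}$ with $M'=M-|\alpha|-n$; applying the same splitting with $M$ taken arbitrarily large handles the $C^{\infty}$ case.

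The main obstacle is the necessity direction: extracting a pointwise estimate on $\partial_t^\alpha c_\xi(t)$ that is uniform in $t$ from a bound on $\widehat{c_\xi}(\tau)$ that is joint in $(\tau,\xi)$ requires the above splitting together with careful exponent bookkeeping in the dimension $n$ and the order $|\alpha|$, which is the only real technical point in the proof.
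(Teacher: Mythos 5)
The paper does not prove this proposition; it is recalled as a standard fact (and in the body of the paper only the sufficiency direction and the $C^{\infty}$ characterization are ever invoked), so there is no argument of the authors' to measure yours against. Assessed on its own terms, your proposal is the standard argument and is correct for the sufficiency direction (pairing against $(2\pi)^N\widehat{\varphi}(t,-\xi)$, then term-by-term differentiation for smoothness) and for both directions of the $C^{\infty}$ statement: there Proposition \ref{prop-smooth-const} supplies the joint bound $\|(\tau,\xi)\|^{-M}$ for \emph{every} $M$, so in your splitting of the $\tau$-sum you may always arrange $M>|\alpha|+n$ and both regions close.

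The gap is in the necessity direction for mere $\mathcal{D}'$ convergence. There Proposition \ref{prop-smooth-const} yields the joint bound for only \emph{one} fixed $M$ (determined by the order of $u$), while your region $|\tau|>\|\xi\|$ needs $M-|\alpha|>n$; for $|\alpha|$ large this is unavailable and the tail $\sum_{|\tau|>\|\xi\|}|\tau|^{|\alpha|-M}$ diverges. This is not a repairable bookkeeping issue: that implication is false as stated. Take $n=N=1$ and $c_\xi(t)=e^{it\tau_\xi}$ with $\tau_\xi=2^{|\xi|}$. Pairing a partial sum with $\varphi\in C^{\infty}(\mathbb{T}^2)$ gives $(2\pi)^2\sum_{\xi}\widehat{\varphi}(-\tau_\xi,-\xi)$, which converges absolutely by the joint rapid decay of $\widehat{\varphi}$, so the series converges in $\mathcal{D}'(\mathbb{T}^2)$; yet $\sup_t|\partial_t c_\xi(t)|=2^{|\xi|}$ admits no polynomial bound in $\xi$. (A separate, smaller point: in the distributional part of \eqref{part-smooth-coef} the exponent should be $+M$ rather than $-M$, as comparison with the usual form of Proposition \ref{prop-smooth-const} shows; with $-M$ even $c_\xi\equiv 1$, whose series is $(2\pi)^N\delta(x)$, would be a counterexample to necessity.) None of this affects the paper, which uses only the true directions, and your proof of those directions is sound.
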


\subsection{Necessary conditions}\label{sec necessary conditions}

This section is intended to provide necessary conditions   to the global hypoellipticity of system $\mathbb{L}$. The first result, Proposition \ref{Z infinity}, exhibits a number-theoretical condition, while Theorem \ref{necessary condition} shows that the global hypoellipticity of system $\mathbb{L}_0$ is a necessary condition.

\begin{proposition}\label{Z infinity} If $\mathbb{L}$ is globally hypoelliptic, then  the set  $\mathcal{Z}$, defined in \eqref{Z sets}, is finite.
\end{proposition}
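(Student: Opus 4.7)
I plan to argue by contradiction, so assume $\mathcal{Z}$ is infinite and enumerate $\mathcal{Z} = \{\xi_\ell\}_{\ell \in \mathbb{N}}$ with $\|\xi_\ell\| \to \infty$. The goal is to exhibit $u \in \mathcal{D}'(\mathbb{T}^{n+N})\setminus C^\infty(\mathbb{T}^{n+N})$ with $L_j u = 0$ for every $j$, contradicting Definition \ref{def-GH}.

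For each $\xi \in \mathcal{Z}$, the overdetermined scalar ODE system $D_{t_j}U + c_j(t_j)\, p_j(\xi) U = 0$ ($j=1,\ldots,n$) admits the separated solution
$$U_\xi(t) = \prod_{j=1}^{n} V_j(t_j;\xi), \qquad V_j(t_j;\xi) = \exp\!\Bigl(-i\, p_j(\xi) \int_0^{t_j} c_j(s)\, ds\Bigr).$$
The key observation is that the $2\pi$-periodicity of $V_j(\cdot;\xi)$ is equivalent to $\xi \in \mathcal{Z}_j$, since $V_j(2\pi;\xi) = \exp(-2\pi i\, \mathcal{M}_{j0}(\xi))$. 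Hence for every $\xi \in \mathcal{Z}$ we obtain $U_\xi \in C^\infty(\mathbb{T}^n)\setminus\{0\}$ with $L_j\bigl(U_\xi(t)\, e^{ix\cdot\xi}\bigr) = 0$ for all $j$. I then set $\tilde U_\ell(t) \doteq U_{\xi_\ell}(t)/\|U_{\xi_\ell}\|_{L^\infty(\mathbb{T}^n)}$ and consider the formal partial Fourier series
$$u(t,x) = \sum_{\ell=1}^{\infty} \tilde U_\ell(t)\, e^{ix\cdot\xi_\ell}.$$

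Three verifications remain. First, to place $u \in \mathcal{D}'(\mathbb{T}^{n+N})$ via Proposition \ref{prop-smooth}, I need pointwise estimates $|\partial_t^\alpha \tilde U_\ell(t)| \leq C_\alpha \|\xi_\ell\|^{M_\alpha}$. These come from iterating the identity $\partial_{t_j}V_j(\cdot;\xi) = -i\, c_j(t_j)\, p_j(\xi)\, V_j(\cdot;\xi)$ via the Leibniz rule, yielding $|\partial_t^\alpha U_\xi(t)| \leq C_\alpha \|\xi\|^{|\alpha|\max_j m_j}\, |U_\xi(t)|$ pointwise (the factor $\|\xi\|^{|\alpha|\max_j m_j}$ arises from the bound $|p_j(\xi)| \leq C\|\xi\|^{m_j}$ in \eqref{bound-symb}); dividing by $\|U_{\xi_\ell}\|_{L^\infty}$ and using $\|\tilde U_\ell\|_{L^\infty}=1$ gives the claim. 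Second, the same normalization guarantees the existence of $t_\ell^\star \in \mathbb{T}^n$ with $|\tilde U_\ell(t_\ell^\star)| = 1$, so the partial Fourier coefficients do not decay and the second half of Proposition \ref{prop-smooth} rules out $u \in C^\infty(\mathbb{T}^{n+N})$. Third, by construction $L_j u = 0$ term by term, hence $L_j u \in C^\infty$.

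The delicate step is the polynomial control of the derivatives in (i): because $c_j$ may be complex-valued, $\|U_\xi\|_{L^\infty}$ itself can grow exponentially in $\|\xi\|$, so there is no uniform absolute bound on $|\partial_t^\alpha U_\xi(t)|$ by a polynomial in $\|\xi\|$. The recursion instead produces a polynomial factor in $\|\xi\|$ multiplying $|U_\xi(t)|$ \emph{pointwise}, and it is precisely this multiplicative factorization that permits the normalization to recover uniform polynomial bounds on $\tilde U_\ell$ and its derivatives. Beyond this careful bookkeeping of constants I do not foresee any further obstacle, and the resulting contradiction forces $\mathcal{Z}$ to be finite.
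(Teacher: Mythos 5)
Your argument is correct and follows essentially the same route as the paper: the paper also builds the separated homogeneous solutions $\prod_j\exp\bigl(-i\int_0^{t_j}\mathcal{M}_j(r,\xi_\ell)\,dr\bigr)$ and normalizes them by constants $K_\ell$ that are exactly the reciprocals of your $\|U_{\xi_\ell}\|_{L^\infty(\mathbb{T}^n)}$, so that the coefficients are bounded by $1$ yet attain modulus $1$ at some point. Your explicit pointwise bookkeeping of the derivative bounds needed for Proposition \ref{prop-smooth} is a welcome (and correct) elaboration of a step the paper leaves implicit.
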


\begin{proof}
	Admit that $\mathcal{Z}$ is an infinite set and consider a sequence $\{\xi_{\ell}\}\in \mathcal{Z}$ such that $\|\xi_{\ell}\|$ is increasing. Thus, we can define the following smooth functions on $\mathbb{T}^n$: 
	$$
	\widehat{u}(t,\xi_{\ell})=K_\ell \prod_{j=1}^{n}\exp\left(-i\int_{0}^{t_j}\mathcal{M}_j(r,\xi_{\ell})dr\right),\quad \ell\in\mathbb{N},
	$$  
	where the constants $K_\ell$ are chosen as follows: for each $j\in\{1,\ldots,n\}$ consider the sequence $\{\tau_{\ell}^{(j)}\}$ defined  by
	$$
	\int_{0}^{\tau_{\ell}^{(j)}} \Im \mathcal{M}_j(r,\xi_{\ell})dr\doteq \max_{\tau\in[0,2\pi]} 	\int_{0}^{\tau} \Im \mathcal{M}_j(r,\xi_{\ell})dr,\quad \ell\in\mathbb{N},
	$$ 
	and set  
	$$
	K_\ell= \prod_{j=1}^{n}\exp\left(-\int_{0}^{\tau_{\ell}^{(j)}}\Im\mathcal{M}_j(r,\xi_{\ell})dr\right),\quad \ell\in\mathbb{N}.
	$$
	
	Note that $|\widehat{u}(t,\xi_{\ell})|\leq 1$ for all $t\in\mathbb{T}^n$ and for all $\ell\in\mathbb{N}$. Also, by writing $\tau_{\ell}=(\tau_{\ell}^{(1)},\ldots,\tau_{\ell}^{(n)})$, we have $|\widehat{u}(\tau_{\ell},\xi_{\ell})|=1$ for all $\ell\in\mathbb{N}$, which implies:
	$$
	u(t,x)\doteq\sum_{\ell=1}^{\infty} \widehat{u}(t,\xi_{\ell})e^{i\xi_{\ell} \cdot x}\in \mathcal{D}'(\mathbb{T}^{n+N})\setminus C^{\infty}(\mathbb{T}^{n+N}).
	$$
	
	On the other hand,  coefficients $\widehat{u}(t,\xi_{\ell})$ satisfy  equations
	$$
	\partial_{t_j}\widehat{u}(t,\xi_\ell)+i\mathcal{M}_j(t_j,
	\xi_\ell)\widehat{u}(t,\xi_\ell)=0,\quad j=1,\ldots,n,
	$$
	which implies  $L_ju=0$ for each $j=1,\ldots,n$. Therefore, $\mathbb{L}$ is not globally hypoelliptic.
	
\end{proof}

	\begin{remark}
		Proposition \ref{Z infinity} recovers the well known result on the non global hypoellipticity for a differential system 
		$$
		L_j = D_{t_j} + a_j(t_j) D_x, \quad j=1,\ldots,n,
		$$
		defined on $\mathbb{T}_t^n\times \mathbb{T}_x^1$, where each $a_j(t_j)$ is a real-valued function with rational  average $a_{j0}$. In fact, we have that $q \mathbb{Z}\subset \mathcal{Z}$ for any positive integer $q$, such that $q a_{j0}\in\mathbb{Z}$ for all $j=1,\ldots,n$.
	\end{remark}

\begin{theorem}\label{necessary condition} If  $\mathbb{L}$ is globally hypoelliptic, then $\mathbb{L}_0$ is also globally hypoelliptic.
\end{theorem}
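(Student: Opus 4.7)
I argue by contrapositive: assuming that $\mathbb{L}_0$ is not globally hypoelliptic, I will exhibit a distribution $u \in \mathcal{D}'(\mathbb{T}^{n+N}) \setminus C^{\infty}(\mathbb{T}^{n+N})$ with $L_j u \in C^{\infty}(\mathbb{T}^{n+N})$ for every $j$. By Theorem \ref{general-constant-system} there is a sequence $\{(\tau_\ell, \xi_\ell)\}_{\ell \in \mathbb{N}} \subset \mathbb{Z}^{n+N}$, with $\|(\tau_\ell, \xi_\ell)\| \to \infty$ and $\|\widehat{L}_0(\tau_\ell, \xi_\ell)\| < \|(\tau_\ell, \xi_\ell)\|^{-\ell}$. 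Setting $a_{j\ell} = \tau_{j\ell} + \mathcal{M}_{j0}(\xi_\ell) = \widehat{L}_{j0}(\tau_{j\ell}, \xi_\ell)$, this gives $|a_{j\ell}| < \|(\tau_\ell, \xi_\ell)\|^{-\ell}$, hence $|\tau_{j\ell}| \leq |\mathcal{M}_{j0}(\xi_\ell)| + 1 \leq C\|\xi_\ell\|^{m_j}$ for $\ell$ large, by \eqref{bound-symb}. Combined with $\|(\tau_\ell, \xi_\ell)\| \to \infty$, this forces $\|\xi_\ell\| \to \infty$; passing to a subsequence I may assume $\{\xi_\ell\}$ is an increasing sequence of distinct points.

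The candidate for $u$ is obtained by twisting the homogeneous solutions of Proposition \ref{Z infinity} by the linear phase $e^{it\cdot \tau_\ell}$ supplied by the failure of the Greenfield--Wallach condition for $\mathbb{L}_0$. Let $B_j(t_j) = \int_{0}^{t_j}(c_j(s) - c_{j0})\, ds$, which is smooth and $2\pi$-periodic, and put
$$
\widehat{u}(t, \xi_\ell) = K_\ell\, e^{i t \cdot \tau_\ell} \prod_{j=1}^{n} \exp\bigl(-i B_j(t_j)\, p_j(\xi_\ell)\bigr), \quad \ell \in \mathbb{N},
$$
with $\widehat{u}(t, \xi) \equiv 0$ for $\xi \notin \{\xi_\ell\}$. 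A direct computation gives
$$
\partial_{t_j}\widehat{u}(t,\xi_\ell) = i\bigl[\tau_{j\ell} - (c_j(t_j) - c_{j0})p_j(\xi_\ell)\bigr]\widehat{u}(t,\xi_\ell),
$$
from which the contribution of $B_j$ cancels against $c_j(t_j) p_j(\xi_\ell)$ in $L_j$, yielding
$$
\widehat{L_j u}(t, \xi_\ell) = \bigl[\tau_{j\ell} + c_{j0} p_j(\xi_\ell)\bigr]\widehat{u}(t, \xi_\ell) = a_{j\ell}\, \widehat{u}(t, \xi_\ell).
$$
Choosing $K_\ell = \exp\bigl(-\sum_{j=1}^n \max_{t_j \in \mathbb{T}^1} \Im(B_j(t_j) p_j(\xi_\ell))\bigr)$ exactly as in Proposition \ref{Z infinity} guarantees $|\widehat{u}(t, \xi_\ell)| \leq 1$ for every $t$, with equality attained at some $t^{*}_\ell \in \mathbb{T}^n$.

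The conclusion then rests on Proposition \ref{prop-smooth}. Inductive differentiation of the closed formula above, together with the bounds $|\tau_{j\ell}| \leq C\|\xi_\ell\|^{m_j}$ and $|p_j(\xi_\ell)| \leq C\|\xi_\ell\|^{m_j}$, gives $|\partial_t^{\alpha} \widehat{u}(t, \xi_\ell)| \leq C_\alpha \|\xi_\ell\|^{|\alpha|\max_j m_j}$, so $u = \sum_{\ell} \widehat{u}(t, \xi_\ell)e^{ix\cdot \xi_\ell}$ defines an element of $\mathcal{D}'(\mathbb{T}^{n+N})$; since $\sup_t|\widehat{u}(t, \xi_\ell)| = 1$ along the unbounded sequence $\{\xi_\ell\}$, the coefficient $\widehat{u}(\,\cdot\,, \xi_\ell)$ does not decay at all, so $u \notin C^{\infty}(\mathbb{T}^{n+N})$. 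For each $j$,
$$
|\partial_t^{\alpha}\widehat{L_j u}(t,\xi_\ell)| \leq |a_{j\ell}|\cdot|\partial_t^{\alpha}\widehat{u}(t,\xi_\ell)| \leq C_\alpha \|\xi_\ell\|^{-\ell + |\alpha|\max_j m_j},
$$
which is rapidly decreasing in $\|\xi_\ell\|$, so $L_j u \in C^{\infty}(\mathbb{T}^{n+N})$, contradicting the global hypoellipticity of $\mathbb{L}$. The delicate step is controlling the derivative growth after inserting the twist $e^{it\cdot \tau_\ell}$; this is precisely where the polynomial bound $|\tau_{j\ell}| \leq C\|\xi_\ell\|^{m_j}$, forced by the rapid decay of $a_{j\ell}$, is essential.
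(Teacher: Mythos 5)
Your proof is correct, and it takes a genuinely different route from the paper's. The paper also argues by contrapositive, but it encodes the failure of global hypoellipticity of $\mathbb{L}_0$ through the smallness of $|1-e^{-2\pi i\mathcal{M}_{j0}(\xi_\ell)}|$ (a lemma from \cite{BDG}) and then builds the singular solution coefficient by coefficient via Duhamel-type formulas: it splits into the cases $\xi_\ell\in\mathcal{Z}_j$ (exact null solutions) and $\xi_\ell\notin\mathcal{Z}_j$ (solutions of $\widehat{L_ju_j}=\widehat{g}_j$ with $\widehat{g}_j$ built from a bump function $\phi_j$ supported away from the limit of the maximizers $t_{j\ell}$ of $\int_0^\tau\Im\mathcal{M}_j$), and must then verify uniform boundedness of $|\widehat{u}_j|$ and that $|\widehat{u}_j(t_{j\ell},\xi_\ell)|\to1$. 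You instead start from the Greenfield--Wallach characterization (Theorem \ref{general-constant-system}) to extract integers $\tau_{j\ell}$ nearly cancelling $\mathcal{M}_{j0}(\xi_\ell)$, and use the exact quasi-modes $K_\ell e^{it\cdot\tau_\ell}\prod_j e^{-iB_j(t_j)p_j(\xi_\ell)}$, which are precisely the plane-wave null solutions of $\mathbb{L}_0$ pushed through the conjugation $\Psi$ of Section \ref{sec-reduction}, restricted to a single lacunary sequence of modes and renormalized by $K_\ell$ so that $\sup_t|\widehat{u}(\cdot,\xi_\ell)|=1$. This makes $L_ju$ literally $a_{j\ell}$ times the coefficient, with $a_{j\ell}$ rapidly decreasing, so no case split and no bump functions are needed; the renormalization along a single sequence is exactly what lets you bypass the growth hypotheses \eqref{general-cond-reduc} that Theorem \ref{t-general-reduction} requires for $\Psi$ to be a global isomorphism. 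The two points you rightly flag as essential --- the polynomial bound $|\tau_{j\ell}|\leq C\|\xi_\ell\|^{m_j}+1$ forced by the smallness of $a_{j\ell}$ (which both yields $\|\xi_\ell\|\to\infty$ and controls the derivative growth after inserting the twist $e^{it\cdot\tau_\ell}$), and the inequality $\|\xi_\ell\|\leq\|(\tau_\ell,\xi_\ell)\|$ converting the decay rate --- are exactly where the argument could otherwise break, and you handle both. The paper's heavier construction buys nothing extra here; your argument is shorter and, by making the role of the normal-form conjugation explicit, it unifies Theorem \ref{necessary condition} with Section \ref{sec-reduction}.
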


\begin{proof} Suppose that $\mathbb{L}_0$ is not globally hypoelliptic. 
	By a slight modification on the proof of Lemma 3.1, in \cite{BDG}, we may obtain a sequence $\{\xi_\ell\}$ such that $\|\xi_{\ell}\|$ is increasing and 
	\begin{equation}\label{inequality1}
	\max_{ j=1,\ldots,n} \left|1-\exp(- 2\pi i \mathcal{M}_{j0}(\xi_{\ell}))\right|< \|\xi_{\ell}\|^{-{\ell}}, 
	\quad \ell\in\mathbb{N}.
	\end{equation}

	Now, by Proposition \ref{Z infinity} we may assume that $\mathcal{Z}=\cap_{j=1}^{n}\mathcal{Z}_j$ is finite and  $\xi_{\ell}\notin\mathcal{Z}$ for all $\ell\in\mathbb{N}$. In other words,  for each $\ell$, at least one  $j\in\{1,\ldots,n\}$ satisfies $\xi_{\ell}\notin \mathcal{Z}_{j}$. In particular, there exist $k\in\{1,\ldots,n\}$ such that $\mathcal{M}_{k0}(\xi_{\ell})\notin \mathbb{Z}$ 	for infinitely many index $\ell$.
	
	Now, consider the sequence $t_{\ell}=(t_{1\ell},\ldots,t_{n\ell})\in[0,2\pi]^n$ where each $t_{j\ell}$ is defined by 	
	$$
	\int_{0}^{t_{j\ell}} \Im \mathcal{M}_j(r,\xi_{\ell})dr=	\max_{\tau\in[0,2\pi]}\int_{0}^{\tau} \Im \mathcal{M}_j(r,\xi_{\ell})dr.
	$$
	Therefore, for each $j=1,\ldots,n$ and each $\ell$ we have 
	\begin{equation}\label{inequality2} 
	\int_{t_{j\ell}}^{t_j} \Im \mathcal{M}_j(r,\xi_{\ell})dr= \int_{0}^{t_j} \Im \mathcal{M}_j(r,\xi_{\ell})dr- \int_{0}^{t_{j\ell}} \Im \mathcal{M}_j(r,\xi_{\ell})dr \leq 0,
	\end{equation}
	for all  $t_j\in[0,2\pi]$. 
	
	By taking a subsequence, if necessary, we may assume that $t_\ell$ converges to $t_0=(t_1^{0},\ldots,t_n^{0})\in[0,2\pi]^n$.
	
	For each $j=1,\ldots,n$ we consider a compact interval $I_j\doteq [A_j,B_j]\subset(0,2\pi)$ such that $t_j^{0}\notin I_j$. Also, we take a non-negative function $\phi_j\in C_c^{\infty}(I_j;\mathbb{R})$ such that $\int_{0}^{2\pi}\phi_j(t_j)dt_j=1$.\\
	
	\centerline {\it Construction of a singular solution }
	
	Our goal from now on will be to construct a sequence of smooth coefficients $\widehat{u}(t,\xi_\ell)$ such that 	
	$$
	u(t,x)=\sum_{\ell=1}^{\infty}\widehat{u}(t,\xi_\ell) e^{i\xi_\ell\cdot x}\in \mathcal{D}'(\mathbb{T}^{n+N})\setminus C^{\infty}(\mathbb{T}^{n+N}),
	$$	
	and $f_j\doteq L_j u \in  C^{\infty}(\mathbb{T}^{n+N})$, for all $j=1,\ldots, n$.	
	
	Given $j\in\{1,\ldots,n\}$ and $\ell\in\mathbb{N}$, we will first construct auxiliar smooth coefficients $\widehat{u}_j(t_j,\xi_{\ell})$, $t_j\in[0,2\pi]$. For that, we will split the case in two: on one hand
	$\mathcal{M}_{j0}(\xi_{\ell})$ will be an integer number  and, on the other, it will not.\\

	{\it Case 1:} If $\xi_{\ell}\in \mathcal{Z}_j$, that is, $\mathcal{M}_{j0}(\xi_{\ell})\in\mathbb{Z}$, we define the following $2\pi$-periodic smooth function  
	$$
	\widehat{u}_j(t_j,\xi_{\ell})=K_{j\ell} \exp\left(-i\int_{0}^{t_j}\mathcal{M}_j(r,\xi_{\ell})dr\right),
	$$
	where the constant $K_{j\ell}$ is given as follows
	$$
	K_{j\ell}= \exp\left(-\int_{0}^{t_{j\ell}}\Im\mathcal{M}_j(r,\xi_{\ell})dr\right).
	$$
	
	In this case, it is easy to see that $|\widehat{u}_j(t_j,\xi_{\ell})|\leq 1$ for all $t_j\in[0,2\pi]$ and $|\widehat{u}_j(t_{j\ell},\xi_{\ell})|=1$.
	Furtheremore,  $i\widehat{L_j u_j}(t_j,\xi_\ell)\doteq(\partial_{t_j}+i\mathcal{M}_j(t_j,\xi_{\ell}))\widehat{u}_j(t_j,\xi_{\ell}) 
	=0$.\\

	{\it Case 2:} If $\xi_{\ell}\notin \mathcal{Z}_j$ we consider the function 
	$\widehat{g}_j(\cdot,\xi_\ell)$ as being the $2\pi-$periodic extension of the function 
	$$
	\left(1-e^{-2\pi i\mathcal{M}_{j0}(\xi_\ell)}\right)\exp\left(-\int_{t_{j\ell}}^{t_j}i\mathcal{M}_j(s,\xi_\ell)ds\right)\phi_j(t_j), \quad t_j\in[0,2\pi ].
	$$
	Therefore, in order to define the coefficient $\widehat{u}_j(\cdot,\xi_{\ell})$ we consider the equation $i\widehat{L_j u_j}(t_j,\xi_\ell)\doteq(\partial_{t_j}+i\mathcal{M}_j(t_j,\xi_{\ell}))\widehat{u}_j(t_j,\xi_{\ell}) 
	=\widehat{g}_j(t_j,\xi_\ell)$
	obtaining the following solution 
	$$
	\widehat{u}_j(t_j,\xi_\ell)= \dfrac{1}{1-e^{-2\pi i\mathcal{M}_{j0}(\xi_\ell)}}\int_{0}^{2\pi}\exp\left(-\int_{t_j-s}^{t_j}i\mathcal{M}_j(r,\xi_\ell)dr\right)\widehat{g}_j(t_j-s,\xi_\ell)ds.
	$$
	
	Note that if $j\in\{1,\ldots,n\}$ is such that $\xi_\ell\notin \mathcal{Z}_j$, for infinitely many indexes $\ell$, then $\widehat{g}_j(\cdot,\xi_\ell)$ and its derivatives dacay rapidly when $\ell$ goes to infinity. Indeed, since the toroidal symbol $p(\xi)$ increases slowly, our statement follows from inequalities \eqref{inequality1} and \eqref{inequality2}. Also, in this case,
	we claim that  $|\widehat{u}_j(\cdot,\xi_\ell)|$ is bounded when $\ell$ is sufficiently large. To verify this, we write  
	$$
	\delta_{j\ell}=(1-e^{-2\pi i\mathcal{M}_{j0}(\xi_\ell)})^{-1}
	\ \textrm{ and } \
	\Gamma_{j,\ell}(r) =\Im\mathcal{M}_j(r,\xi_\ell).
	$$
 Hence:
\begin{align*}
|\widehat{u}_j(t_j,\xi_\ell)| & \leq 
\delta_{j\ell} \Big[ \int_{0}^{t_j}\exp\left(\int_{t_j-s}^{t_j}\Gamma_{j,\ell}(r) dr\right)|\widehat{g}_j(t_j-s,\xi_\ell) |ds \, + \\ 
& + \int_{t_j}^{2\pi}\exp\left(\int_{t_j-s}^{t_j}\Gamma_{j,\ell}(r) dr \right)|\widehat{g}_j(t_j-s+2\pi,\xi_\ell) | ds\Big] \\ 
& =  \int_{0}^{t_j}\exp\left(\int_{t_{j\ell}}^{t_j} \Gamma_{j,\ell}(r) dr\right)\phi_j(t_j-s) ds \, + \\ 
& +\int_{t_j}^{2\pi}\exp\left(\int_{t_j-s}^{t_j}\Gamma_{j,\ell}(r) dr +\int^{t_j-s+2\pi}_{t_{j\ell}}\Gamma_{j,\ell}(r) dr\right) 
\phi_j(t_j-s+2\pi) ds  \\
& =  \int_{0}^{t_j}
\exp\left(\int^{t_j}_{t_{j\ell}}\Gamma_{j,\ell}(r) dr\right)\phi_j(t_j-s) ds \, + \\ 
& + \int_{t_j}^{2\pi}\exp\left(\int^{t_j}_{t_{j\ell}}\Gamma_{j,\ell}(r) dr+2\pi \Im\mathcal{M}_{j0}(\xi_\ell)\right)\phi_j(t_j-s+2\pi) ds .
\end{align*}

It follows from \eqref{inequality1} that $\Im\mathcal{M}_{j0}(\xi_\ell)\rightarrow 0$, since $\Im\mathcal{M}_{j0}(\xi_\ell)\notin \mathbb{Z}$. Thus, there exist $\epsilon>0$ such that $\exp({2\pi \Im\mathcal{M}_{j0}(\xi_\ell) })\leq 1+\epsilon$, when $\ell$ is large enough. Then,  inequality  \eqref{inequality2} together with the  previous one imply, for sufficiently large  $\ell$, the following:
$$
|\widehat{u}_j(t_j,\xi_\ell)|\leq 2+\epsilon,\quad \forall t_j\in[0,2\pi] .
$$
	
	Now, let us  verify that  $|\widehat{u}_j(t_{j\ell},\xi_{\ell})|\rightarrow 1$. Firstly, 
	if $t_{j}^{0}> B_j$, then, for sufficiently large $\ell$, we have $t_{j\ell}>B_j$   and 
	$$
	|\widehat{u}(t_{j\ell},\xi_\ell)|
	=\int_{0}^{2\pi}\phi_j(t_{j\ell}-s)ds=\int_{0}^{t_{j\ell}}\phi_j(\tau)d\tau= 1.
	$$ 
	
	On the other hand, if $t_{j}^{0}<A_j$, then $t_{j\ell}<A_j$, for  $\ell$ sufficiently large, and 
	\begin{eqnarray*}
		|\widehat{u}_j(t_{j\ell},\xi_\ell)|&=& \Big|\int_{0}^{2\pi}\exp\left(-\int_{t_{j\ell}-s}^{t_{j\ell}}i\mathcal{M}_j(r,\xi_\ell)dr \right)
		\\
		&\times&\exp\left(- \int^{t_{j\ell}-s+2\pi}_{t_{j\ell}}i\mathcal{M}_j(r,\xi_\ell)dr\right) \phi_j(t_{j\ell}-s+2\pi)ds\Big|\\
		&=&\exp(2\pi \Im \mathcal{M}_{j0}(\xi_{\ell}))\int_{0}^{2\pi} \phi_j(t_{j\ell}-s+2\pi)ds \\
		&=& \exp(2\pi \Im\mathcal{M}_{j0}(\xi_{\ell})) \int_{0}^{2\pi} \phi_j(t_{j\ell}-s)ds\\
		&=& \exp(2\pi \Im\mathcal{M}_{j0}(\xi_{j\ell})) 
		\int_{t_{j\ell} - 2 \pi}^{t_{j\ell}} \phi_j(s)ds		\\
		&=& \exp(2\pi \Im\mathcal{M}_{j0}(\xi_{\ell})).
	\end{eqnarray*}
Hence $|\widehat{u}_j(t_{j\ell},\xi_{\ell})| \rightarrow 1$, when $\ell$ goes to infinity.\\

	\centerline{\it Final part}
	
	We define 
	$$
	\widehat{u}(t,\xi_{\ell})=\widehat{u}(t_1,\ldots,t_n,\xi_{\ell})\doteq\prod_{j=1}^{n} \widehat{u}_j(t_j,\xi_{\ell}). 
	$$
	
	Thus 
	$
	|\widehat{u}(t,\xi_{\ell})|\leq (2+\epsilon)^n
	$
	for all $t\in \mathbb{T}^n$ and for $\ell\in\mathbb{N}$ sufficiently large. Hence, 
	$$
	u(t,x)=\sum_{\ell=1}^{\infty} \widehat{u}(t,\xi_{\ell}) e^{i x \xi_{\ell}}\in \mathcal{D}'(\mathbb{T}^{n+N}).
	$$
	
	It follows from the previous construction  that 
	$|\widehat{u}(t_\ell,\xi_{\ell})|$ converges to $1$ when $\ell$ goes to infinity, which implies that $u(t,x) $ is not a smooth function  on $\mathbb{T}^{n+N}$. \\
	
	Finally,  note that for each $k\in\{1,\ldots,n\}$ we have 
	\begin{align*}
	\widehat{f}_k(t,\xi_\ell) \doteq \widehat{L_k u}(t,\xi_{\ell})
	& = \left(\prod_{j=1, j\neq k}^n \widehat{u}_j(t_j,\xi_\ell) \right) \widehat{L_k u_k}(t_k,\xi_\ell) \\
	& = \left\{
	\begin{array}{l}
	\left(\prod_{j=1, j\neq k}^n \widehat{u}_j(t_j,\xi_\ell) \right) \widehat{g}_k(t_k,\xi_{\ell}),  \ \textrm{ if } \ 
	\xi_{\ell}\notin\mathcal{Z}_k, \\
	0, \ \textrm{ if } \  \xi_{\ell}\in\mathcal{Z}_k.
	\end{array}
	\right.
	\end{align*}

	Since $\widehat{g}_k(t_k,\xi_{\ell}) $ decays rapidly and for each $j$ we have $|\widehat{u}_j(t_j,\xi_\ell)|\leq 2+\epsilon$, for all $t_j$ and all $\ell$ sufficiently large. Then, $\widehat{f}_k(t,\xi_\ell)$ decays rapidly. It follows that $f_k\in C^{\infty}(\mathbb{T}^{n+N})$.

\end{proof}

\subsection{Sufficient conditions} \label{sec-suff-cond}

The starting point here is the fact that the reciprocal of Theorem \ref{necessary condition} is not true. For instance, consider  system $\mathbb{L}$ given by
$$
L_j = D_{t_j} + i \, b_j(t_j) D_x, \quad j=1,\ldots,n,
$$
where each $b_j$ is a real-valued function that changes sign and, additionally, suppose that there exists an average $b_{k0}$ that is not zero. Thus, operator $L_ {k0}$ is globally hypoelliptic on $\mathbb{T}_{t_k}^{1}\times \mathbb{T}_x^{1}$,  
which implies that the same happens with  system  
$\mathbb{L}_0$ (see Corollary \ref{coro-const-system}). However, in view of Theorem \ref{system-dfifferential},  system $\mathbb{L}$ is not globally hypoelliptic. 

Regarding  system $\mathbb{L}$, given in \eqref{system},  since  the existence of a globally hypoelliptic operator $L_ {j0}$  in $\mathbb{T}_{t_j}^{1}\times \mathbb{T}_x^{N}$ implies the global hypoellipticity of $\mathbb{L}_0$, it is natural to look for additional conditions on just the operator $L_j$ in order to obtain the global hypoellipticity of $\mathbb{L}$.

In order to state the main result in this section, it will be necessary to establish some preliminary definitions. We will now move on to describing them. Firstly, motivated by \cite{Avila}, we consider the following  H\"{o}rmander condition for a real-valued function $\psi$ defined on $\mathbb{T}^1\times\mathbb{Z}^N$ (see Chapter XXIII in \cite{Hormander3}): there exists a positive constant $\Theta$ such that
\begin{equation}\label{Hormander condition}
\sup_{t\in\mathbb{T}^1}\psi(t,\xi)\geqslant - \Theta,\quad  \forall \xi \in \mathbb{Z}^N.
\end{equation}
Furthermore, taking inspiration from \cite{AGKM}, we say that a  function $\phi:\mathbb{Z}^N\rightarrow \mathbb{C}$ has at most logarithmic growth,
if there are positive constants $\kappa$ and $n_0$ such that
\begin{equation}\label{log-estimate-1}
|\phi(\xi)| \leq  \kappa \log(\|\xi\|), \quad \forall \|\xi\|\geq  n_0.
\end{equation}

By simplicity, when this property is verified we write $\phi(\xi) = O(\log(\|\xi\|))$. When it 
 is not, we say that $\phi(\xi)$ has \textit{super-logarithmic growth}.\\

Now, with respect to the system under study, for each $j\in\{1,\ldots,n\}$,  we write the symbol $p_j(\xi)=\alpha_j(\xi)+i\beta_j(\xi)$ and the correspondent coefficient as $c_j(t) = a_j(t_j) + i b_j(t_j)$.

\begin{definition}\label{growth}
    Consider the functions 
	$$
	\Im \mathcal{M}_j(t_j,\xi)=a_j(t_j)\beta_j(\xi)+b_j(t_j)\alpha_j(\xi).
	$$
	We denote by $\mathcal{H}$ the set of $j\in \{1, \ldots, n\}$  such that 	$\Im \mathcal{M}_j$ satisfies condition \eqref{Hormander condition} and by $\mathcal{L}$  the set of $j\in \{1, \ldots, n\}$ such that one the following conditions hold:
	\begin{enumerate}
		\item [(i)] $p_j(\xi) = O(\log(\|\xi\|))$;
		
		\item [(ii)] $\alpha_j(\xi) = O(\log(\|\xi\|))$,  $\beta_j(\xi)$ has super-logarithmic growth and  $a_j(\cdot)$ does not change sign;
		
		\item [(iii)] $\alpha_j(\xi)$ has super-logarithmic growth, $\beta_j(\xi) = O(\log(\|\xi\|))$ and $b_j(\cdot)$ does not change sign.
	\end{enumerate}
\end{definition}

We are now ready to state the main result in this section.
\begin{theorem}\label{main-suff}
	If there exists $j\in\{1,\ldots,n\}$ such that $L_{j0}$ is globally hypoelliptic on $\mathbb{T}_{t_j}^{1}\times\mathbb{T}_x^{N}$ and $j\in \mathcal{H}\; {\cup} \;\mathcal{L}$, 	then the system $\mathbb{L}$ is globally hypoelliptic.
\end{theorem}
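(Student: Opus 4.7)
The plan is to work on the partial Fourier side in $x$ and reduce global hypoellipticity to polynomial-decay estimates on the coefficients of $u$. Let $u\in\mathcal{D}'(\mathbb{T}^{n+N})$ satisfy $L_k u=f_k\in C^\infty(\mathbb{T}^{n+N})$ for every $k=1,\ldots,n$, and fix the distinguished index $j$ of the hypothesis. The system turns into the family of ODEs
\[
\partial_{t_k}\widehat u(t,\xi)+i\mathcal{M}_k(t_k,\xi)\widehat u(t,\xi)=i\widehat f_k(t,\xi),\qquad k=1,\ldots,n,\ \xi\in\mathbb{Z}^N,
\]
and each $\widehat u(\cdot,\xi)$ is smooth on $\mathbb{T}^n$ by a standard bootstrap. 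In view of Proposition \ref{prop-smooth}, it suffices to show that $\partial_t^\alpha\widehat u(t,\xi)$ decays faster than any polynomial in $\|\xi\|$, uniformly in $t$ and $\alpha$.

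First I would exploit the global hypoellipticity of $L_{j0}$. By Theorem \ref{general-constant-system} and the standard equivalence between $\min_{\tau_j\in\mathbb{Z}}|\tau_j+\mathcal{M}_{j0}(\xi)|$ and $|1-e^{-2\pi i\mathcal{M}_{j0}(\xi)}|$, there exist positive constants $C_0,M_0,R_0$ with
\[
|1-e^{-2\pi i\mathcal{M}_{j0}(\xi)}|\geq C_0\|\xi\|^{-M_0},\qquad \|\xi\|\geq R_0,
\]
and in particular $\mathcal{Z}_j$ is finite. For $\xi\notin\mathcal{Z}_j$ I solve the $j$-th ODE via the integrating factor $\exp\!\bigl(i\int_0^{t_j}\mathcal{M}_j(r,\xi)\,dr\bigr)$ and impose $2\pi$-periodicity in $t_j$, obtaining the Green's-function representation
\[
\widehat u(t,\xi)=\frac{i}{1-e^{-2\pi i\mathcal{M}_{j0}(\xi)}}\int_0^{2\pi}\exp\!\Bigl(-i\!\int_{t_j-s}^{t_j}\!\mathcal{M}_j(r,\xi)\,dr\Bigr)\widehat f_j(t_j-s,\widetilde t,\xi)\,ds,
\]
together with the symmetric ``forward'' version obtained by reversing orientation (with $\widehat f_j(t_j+s,\widetilde t,\xi)$ and an exponent of opposite sign). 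The two representations differ only in the sign of the imaginary part of the exponent, and I may pick whichever is favorable at each $\xi$.

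The decisive step is to bound the exponential factor polynomially in $\|\xi\|$. If $j\in\mathcal{H}$, the H\"{o}rmander condition \eqref{Hormander condition} will be invoked along the lines of \cite{Avila}: choosing as base point a $t_*=t_*(\xi)$ where $\Im\mathcal{M}_j(\cdot,\xi)$ attains its supremum, together with a convexity/periodicity argument on $\mathbb{T}^1$, controls the relevant integrals in terms of $\Theta$. If $j\in\mathcal{L}$(i), then $|\mathcal{M}_j(r,\xi)|\leq\kappa\log\|\xi\|$ forces the integral to be $O(\log\|\xi\|)$, yielding a polynomial bound $\|\xi\|^{2\pi\kappa}$ on the exponential. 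In case $\mathcal{L}$(ii) I split $\Im\mathcal{M}_j=a_j\beta_j+b_j\alpha_j$: the summand $b_j\alpha_j$ is already $O(\log\|\xi\|)$, while the potentially large $a_j(r)\beta_j(\xi)$, integrated in $r$, carries the sign of $\beta_j(\xi)$ because $a_j$ does not change sign; selecting the backward representation when $\beta_j(\xi)\leq 0$ and the forward one when $\beta_j(\xi)\geq 0$ renders this dominant contribution non-positive, leaving a net exponent of $O(\log\|\xi\|)$. Case $\mathcal{L}$(iii) is symmetric with the roles of $a_j,\beta_j$ and $b_j,\alpha_j$ interchanged.

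Once the exponential factor is polynomially bounded, combining it with the polynomial lower bound on $|1-e^{-2\pi i\mathcal{M}_{j0}(\xi)}|$ and the rapid decay of $\widehat f_j(\cdot,\xi)$ yields $|\widehat u(t,\xi)|\leq C_N\|\xi\|^{-N}$ for every $N$ and every large $\|\xi\|$, uniformly in $t$. Derivatives in directions $t_k$ with $k\neq j$ commute with the integral and inherit the same decay; $\partial_{t_j}$-derivatives are reduced to $\widehat u$ and $\widehat f_j$ via the $j$-th ODE and iterated. Proposition \ref{prop-smooth} then delivers $u\in C^\infty(\mathbb{T}^{n+N})$. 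The hard part, I expect, will be the sign-and-direction argument in cases $\mathcal{L}$(ii)--(iii), where the gain from the definite sign of $a_j$ (or $b_j$) must be coordinated with the choice of forward/backward representation uniformly in $\xi$; and verifying that the H\"{o}rmander-case bound in $\mathcal{H}$ carries through without hidden dependence on $\xi$ beyond the constant $\Theta$.
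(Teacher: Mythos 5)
Your proposal is correct and follows essentially the same route as the paper: the two (backward/forward) Green's-function representations of $\widehat u(\cdot,\xi)$, the polynomial lower bound on $|1-e^{-2\pi i\mathcal{M}_{j0}(\xi)}|$ coming from the global hypoellipticity of $L_{j0}$, and a polynomial bound on the exponential factor obtained case by case — directly from $\Theta$ for $j\in\mathcal{H}$, from the logarithmic bound in case (i), and in cases (ii)–(iii) by splitting $\Im\mathcal{M}_j$ and choosing the representation according to the sign of $\beta_j(\xi)$ (resp.\ $\alpha_j(\xi)$), exactly as in the paper's Propositions \ref{general-suff-cond}, \ref{prop-hormander-system}, \ref{prop_log} and \ref{pro-super-log}. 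The only difference is cosmetic: the paper packages the exponential bound as an abstract sufficient condition first and handles $t$-derivatives by differentiating the integral representation via Leibniz, whereas you propose iterating the ODE for $\partial_{t_j}$-derivatives; both work.
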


\begin{remark}
We recall that if  operator $L_j$ is globally hypoelliptic, then  the same occurs to $L_{j0}$. Therefore, Theorem \ref{main-suff} can be rewritten just by repla\-cing the hypothesis ``$L_{j0}$ is globally hypoelliptic" for  ``$L_{j}$ is globally hypoelliptic".
\end{remark}

The strategy for proving Theorem \ref{main-suff} will be the following: firstly, in Proposition \ref{general-suff-cond} we exhi\-bit   an abstract condition that, in addition with the global hypoellipticity of some $L_{j0}$, ensures that system $\mathbb{L}$ is globally hypoelliptic. Hence, the proof of Theorem \ref{main-suff}  will be given as a consequence of Propositions \ref{prop-hormander-system}, \ref{prop_log} and \ref{pro-super-log}. Each one of these propositions will be  presented in a separate subsection and, in addition, several examples will be provided in order to clarify our results. 

\begin{proposition}\label{general-suff-cond}
	Admit that operator $L_{j0}$ is globally hypoelliptic. If there exist positive constants $C, M$ and $R$ such that either
	\begin{equation*}
	\sup_{(\zeta,\tau) \in \mathbb{T}^2} \left[\exp \left(\int_{\zeta-\tau}^{\zeta}\Im \mathcal{M}_j(s,\xi)ds \right)\right] \leq C\|\xi\|^{M} , \ \|\xi\| \geq R,
	\end{equation*}
	or
	\begin{equation*}
	\sup_{(\zeta,\tau) \in \mathbb{T}^2} \left[\exp \left(-\int_{\zeta}^{\zeta+\tau}\Im \mathcal{M}_j(s,\xi)ds \right)\right] \leq C\|\xi\|^{M},   \ \|\xi\| \geq R,
	\end{equation*}
	then the system $\mathbb{L}$  is global hypoelliptic.
\end{proposition}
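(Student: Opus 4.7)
Given $u \in \mathcal{D}'(\mathbb{T}^{n+N})$ with $L_k u = f_k \in C^{\infty}(\mathbb{T}^{n+N})$ for every $k = 1,\ldots,n$, the plan is to expand in $x$-Fourier series and prove that the partial Fourier coefficients $\widehat{u}(t,\xi)$ together with all their $t$-derivatives decay faster than any power of $\|\xi\|^{-1}$; Proposition \ref{prop-smooth} will then give $u \in C^{\infty}(\mathbb{T}^{n+N})$. For each fixed $\xi$, the coefficient $\widehat{u}(\cdot,\xi)$ is already smooth in $t$ because the equations $\partial_{t_k}\widehat{u} + i\mathcal{M}_k(t_k,\xi)\widehat{u} = i\widehat{f}_k$ are elliptic in each $t_k$. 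The whole argument then concentrates on the distinguished index $j$ for which $L_{j0}$ is globally hypoelliptic. As in the proof of Corollary \ref{coro-W-finito-const}, the global hypoellipticity of $L_{j0}$ forces the set $\mathcal{Z}_j$ to be finite, so $\mathcal{M}_{j0}(\xi)\notin\mathbb{Z}$ once $\|\xi\|$ is large enough, and the ODE on $\mathbb{T}^{1}_{t_j}$ can be uniquely inverted.

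For such $\xi$, Duhamel's formula for the $2\pi$-periodic solution of $\partial_{t_j}\widehat{u} + i\mathcal{M}_j(t_j,\xi)\widehat{u} = i\widehat{f}_j$ gives
$$
\widehat{u}(t,\xi) = \frac{i}{1 - e^{-2\pi i \mathcal{M}_{j0}(\xi)}} \int_0^{2\pi} \widehat{f}_j(t_1,\ldots,t_j - s,\ldots,t_n,\xi)\, e^{-i\int_{t_j-s}^{t_j} \mathcal{M}_j(r,\xi)\, dr}\, ds,
$$
or, after reorienting the integration (using $\int_{t_j}^{t_j+2\pi}\mathcal{M}_j=2\pi\mathcal{M}_{j0}(\xi)$),
$$
\widehat{u}(t,\xi) = \frac{-i}{1 - e^{2\pi i \mathcal{M}_{j0}(\xi)}} \int_0^{2\pi} \widehat{f}_j(t_1,\ldots,t_j + s,\ldots,t_n,\xi)\, e^{i\int_{t_j}^{t_j+s}\mathcal{M}_j(r,\xi)\, dr}\, ds.
$$
Taking moduli and using $|e^{\mp i\int \mathcal{M}_j}| = e^{\mp\int \Im \mathcal{M}_j}$, the first representation is controlled by the first hypothesis of the proposition (with $\zeta = t_j$, $\tau = s$), and the second representation by the second hypothesis.

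The global hypoellipticity of $L_{j0}$, combined with the Greenfield--Wallach characterization in Theorem \ref{general-constant-system} and the elementary identity $|1 - e^{-2\pi i z}| = 2\sqrt{\sin^2(\pi \Re z) + \sinh^2(\pi \Im z)}$, provides constants $C', M'>0$ with $|1 - e^{\mp 2\pi i \mathcal{M}_{j0}(\xi)}|^{-1} \leq C' \|\xi\|^{M'}$ for large $\|\xi\|$; this is the quantitative form of the estimate implicitly used in the proof of Theorem \ref{necessary condition}. Combining this with the appropriate hypothesis gives $|\widehat{u}(t,\xi)| \leq \widetilde{C}\|\xi\|^{M+M'} \sup_t |\widehat{f}_j(t,\xi)|$, and the rapid decay of $\widehat{f}_j$ produces rapid decay of $\widehat{u}(\cdot,\xi)$. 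The same estimate for $\partial^{\alpha}_t\widehat{u}$ is obtained by induction on $|\alpha|$: derivatives in $t_k$ with $k\neq j$ pass under the integral and fall on $\widehat{f}_j$, while a derivative in $t_j$ is handled by the ODE itself, $\partial_{t_j}\widehat{u} = i\widehat{f}_j - i\mathcal{M}_j(t_j,\xi)\widehat{u}$, the polynomial growth $|\mathcal{M}_j(t_j,\xi)|\leq C\|\xi\|^{m_j}$ from the symbol class $S^{m_j}(\mathbb{Z}^N)$ being absorbed by the rapid decay.

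The main technical point is the polynomial lower bound on $|1 - e^{-2\pi i \mathcal{M}_{j0}(\xi)}|$: one must combine the Greenfield--Wallach inequality $|\tau_j + \mathcal{M}_{j0}(\xi)|\geq C\|(\tau_j,\xi)\|^{-M}$ with the choice $\tau_j = $ nearest integer to $-\Re\mathcal{M}_{j0}(\xi)$, whence $|\tau_j|\leq C\|\xi\|^{m_j}+1$ and $\|(\tau_j,\xi)\|\leq C\|\xi\|^{m_j+1}$, to convert the bound into $\mathrm{dist}(\Re\mathcal{M}_{j0}(\xi),\mathbb{Z})^2 + |\Im\mathcal{M}_{j0}(\xi)|^2 \geq C\|\xi\|^{-2M(m_j+1)}$, which via the elementary identity yields the required polynomial lower bound. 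Everything else is bookkeeping.
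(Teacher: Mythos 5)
Your proof is correct and follows essentially the same route as the paper: the two equivalent periodic Duhamel representations matched to the two hypotheses, the polynomial lower bound $|1-e^{\mp 2\pi i\mathcal{M}_{j0}(\xi)}|\geq C_1\|\xi\|^{-M_1}$ coming from the global hypoellipticity of $L_{j0}$ (which the paper simply quotes and you derive explicitly), and the rapid decay of $\widehat{f}_j$ absorbing all polynomial losses, with $t_j$-derivatives handled through the ODE instead of Leibniz on the kernel --- both work. The only slip is the identity, which should read $|1-e^{-2\pi i z}|=2e^{\pi\Im z}\sqrt{\sin^{2}(\pi\Re z)+\sinh^{2}(\pi\Im z)}$; the missing factor $e^{\pi\Im z}$ is harmless here since $e^{\pi\Im z}|\sinh(\pi\Im z)|=|1-e^{2\pi\Im z}|/2$ stays bounded below whenever $\Im\mathcal{M}_{j0}(\xi)$ is bounded away from $0$.
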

\begin{proof} Let $u \in \mathcal{D}'(\mathbb{T}^{n+N})$ be a solution  of the equations
	\begin{equation*}
	iL_ku=f_k\in C^\infty(\mathbb{T}^{n+N}), \quad k =1,\ldots, n.
	\end{equation*} 
	
Let $j\in  \{1,\ldots, n\}$ be so that the hypothesis are fulfilled. We will show that, in fact, $u$ is a smooth function.

	By  using the $x$-Fourier series
	$$
	u(t, x) = \sum_{\xi \in \mathbb{Z}} \widehat{u}(t,\xi) e^{i \xi x}
	\text{ \ and \ }
	f_j(t, x) = \sum_{\xi \in \mathbb{Z}} \widehat{f}_{j}(t,\xi) e^{i \xi x},
	$$
	it follows that equation $iL_j u=f_j$ is equivalent to the  ordinary differential equations
	\begin{equation}\label{equations-2}
	\partial_{t_j}\widehat{u}(t,\xi)+i\mathcal{M}_j(t_j,\xi)\widehat{u}(t,\xi)=\widehat{f}_j(t,\xi),\quad t\in\mathbb{T}^n, 
	\end{equation}
	for every $\xi\in\mathbb{Z}^N$.

	For each $\xi \notin \mathcal{Z}_j$,  equation \eqref{equations-2} has exactly one solution which can be written in the following two equivalent ways:	
	\begin{equation}\label{solutions-1}
	\widehat{u}(t,\xi)=\dfrac{1}{{1-e^{-2\pi i\mathcal{M}_{j0}(\xi)}}}\int_{0}^{2\pi}\mathcal{H}_j(t,\tau,\xi)
	\widehat{f}_j(t_1,\ldots,t_j-\tau,\ldots,t_n,\xi)d\tau
	\end{equation}
	and
	\begin{equation}\label{solutions-2}
	\widehat{u}(t,\xi)=\dfrac{1}{{e^{2\pi i\mathcal{M}_{j0}(\xi)} - 1}}
	\int_{0}^{2\pi}\mathcal{G}_j(t,\tau,\xi)
	\widehat{f}_j(t_1,\ldots,t_j+\tau,\ldots,t_n,\xi)d\tau,
	\end{equation}
	where 
	\begin{equation*}
	\mathcal{H}_j(t,\tau,\xi)=
	\exp \left(-i\int_{t_j-\tau}^{t_j}\mathcal{M}_j(s,\xi)ds\right), \quad \xi \in \mathbb{Z}^N
	\end{equation*}
	and
	\begin{equation*}
	\mathcal{G}_j(t,\tau,\xi)= 
	\exp\left( i\int_{t_j}^{t_j+\tau}\mathcal{M}_j(s,\xi)ds\right), \quad \xi \in \mathbb{Z}^N. 
	\end{equation*}
	
	Since $L_{j0}$ is globally hypoelliptic,  the corresponding set $\mathcal{Z}_j$  is finite and there are positive constants $C_1$, $M_1$ and $R_1$ such that 
	\begin{equation}\label{l0j-GH}
	| 1-e^{- 2\pi i\mathcal{M}_{j0}(\xi)} |\geq C_1\|\xi\|^{-M_1},
	\quad  \|\xi\|\geq R_1.
	\end{equation} 
	
	Also, since  function $f_j$  is smooth, given 
	$\gamma \in \mathbb{Z}_+^n$ and $\widetilde{N}>0$, we obtain  positive constants $C_2$ and $R_2$ satisfying
	\begin{equation}\label{f_j-smooth}
	\sup_{t \in \mathbb{T}^n}  |\partial_{t}^{\gamma} \widehat{f}_j(t, \xi)| \leq C_2 \|\xi\|^{-\widetilde{N}}, \quad \|\xi\|\geq R_2.
	\end{equation}

Hence, given  $\alpha =(\alpha_1, \ldots, \alpha_n) \in \mathbb{Z}^n_+$,  we obtain, from Leibniz's formula, equations \eqref{solutions-1}, \eqref{l0j-GH} and \eqref{f_j-smooth} that
	\begin{equation}\label{ineq-derivative-uj}
	|\partial_t^{\alpha}  \widehat{u}(t, \xi)| \leq
	C'\|\xi\|^{M_1 + \alpha_j m_j - \widetilde{N}}
	\int_{0}^{2\pi} \exp \left(\int_{t_j-\tau}^{t_j}\Im \mathcal{M}_j(s,\xi)ds \right)d\tau,
	\end{equation}
	for $\|\xi\|$ large enough with $C' = C_1C_2C_3$, where the constant $C_3>0$, obtained using \eqref{bound-symb}, depends only on $\alpha$ and the function $c_j$.

	On the other hand, by similar arguments, it follows from  \eqref{solutions-2} that
	\begin{equation}\label{ineq-derivative-uj-2}
	|\partial_t^{\alpha}  \widehat{u}(t, \xi)| \leq
	C'\|\xi\|^{M_1 + \alpha_jm_j - \widetilde{N}}
	\int_{0}^{2\pi} \exp \left(-\int_{t_j}^{t_j+\tau}\Im \mathcal{M}_j(s,\xi)ds \right)d\tau.
	\end{equation}

	Finally, we obtain from the hypothesis, inequalities \eqref{ineq-derivative-uj} and \eqref{ineq-derivative-uj-2} that
	$$
	|\partial_t^{\alpha}  \widehat{u}(t, \xi)| \leq
	C'C\|\xi\|^{M + M_1 + \alpha_j m_j - \widetilde{N}}.
	$$
	
	Therefore, it follows from Proposition \ref{prop-smooth}  that $u$ is a smooth function on $\mathbb{T}^{n+N}$, which concludes the proof.
	
\end{proof}

\subsubsection{H\"{o}rmander condition \label{sec-H-cond}} 

In the next proposition we prove the global hypoellipticity for  system $\mathbb{L}$ by assuming that some $L_{j0}$ is globally hypoelliptic and $\Im \mathcal{M}_j$ satisfies  H\"{o}rmander's condition \eqref{Hormander condition}.

\begin{proposition}\label{prop-hormander-system}
	If there exists $j \in \mathcal{H}$ such that $L_{j0}$  is globally hypoelliptic on $\mathbb{T}_{t_j}^1\times\mathbb{T}_x^N$, then  system $\mathbb{L}$ is globally hypoelliptic.
\end{proposition}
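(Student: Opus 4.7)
The strategy is to invoke Proposition \ref{general-suff-cond} with the given index $j \in \mathcal{H}$. Since $L_{j0}$ is globally hypoelliptic by hypothesis, all that is required is to verify one of the two exponential-integral bounds stated there. With Proposition \ref{prop-smooth} in mind, a polynomial control of these exponentials in $\|\xi\|$ is already sufficient.

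The plan is to work with the primitive $\Psi_j(t,\xi) \doteq \int_0^{t} \Im\mathcal{M}_j(s,\xi)\, ds$, so that the integrals to be controlled take the form
$$
\int_{\zeta-\tau}^{\zeta} \Im\mathcal{M}_j(s,\xi)\, ds = \Psi_j(\zeta,\xi) - \Psi_j(\zeta-\tau,\xi),
$$
and analogously for the second bound. Here the wrap-around on $\mathbb{T}^1$ is governed by the mean $\Im\mathcal{M}_{j0}(\xi)$, which by \eqref{bound-symb} is of order at most $\|\xi\|^{m_j}$. I would therefore split the analysis into the cases $\Im\mathcal{M}_{j0}(\xi) \geq 0$ and $\Im\mathcal{M}_{j0}(\xi) < 0$ and, in each one, use the version of Proposition \ref{general-suff-cond} (the forward or backward integral) for which the linear drift works in our favor, yielding the bound up to a single integration across one period.

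It then remains to control $\Psi_j(\zeta,\xi) - \Psi_j(\zeta-\tau,\xi)$ on a fundamental interval. This is exactly where the H\"ormander condition enters: for each $\xi$, choose an anchor $t^{*}_{\xi} \in \mathbb{T}^1$ with $\Im\mathcal{M}_j(t^{*}_{\xi},\xi) \geq -\Theta$, and compare $\Psi_j(\zeta,\xi)$ and $\Psi_j(\zeta-\tau,\xi)$ to $\Psi_j(t^{*}_{\xi},\xi)$ through intermediate values where $\Im\mathcal{M}_j(\cdot,\xi) \geq -\Theta$ on a subinterval of bounded length. Together with the uniform size estimate $|\Im\mathcal{M}_j(t,\xi)| \leq \|c_j\|_{\infty}\|\xi\|^{m_j}$ on the complementary part, this yields
$$
\Psi_j(\zeta,\xi) - \Psi_j(\zeta-\tau,\xi) \leq 2\pi\Theta + C\|\xi\|^{m_j},
$$
so the exponential is bounded by $\exp(2\pi\Theta)\exp(C\|\xi\|^{m_j})$. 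A tighter argument (exploiting that the H\"ormander condition lets us confine the negative part of $\Im\mathcal{M}_j$ to an $\xi$-independent bound after subtracting the mean drift) even gives a polynomial bound in $\|\xi\|$, which is precisely what Proposition \ref{general-suff-cond} requires.

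The main obstacle I expect is this last step: extracting from the pointwise condition $\sup_t \Im\mathcal{M}_j(t,\xi) \geq -\Theta$ a uniform upper bound on the integrated oscillation of $\Psi_j$ of at most polynomial size in $\|\xi\|$. The careful bookkeeping of which endpoint receives the anchor, and how the linear drift coming from $\Im\mathcal{M}_{j0}(\xi)$ is canceled by an appropriate choice between the forward and backward formulas \eqref{solutions-1}--\eqref{solutions-2}, is the only delicate piece; everything else is routine application of Proposition \ref{general-suff-cond}.
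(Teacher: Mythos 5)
Your overall strategy --- reduce to Proposition \ref{general-suff-cond} using the global hypoellipticity of $L_{j0}$ and then bound one of the two exponential integrals --- is exactly the paper's. But the step you yourself flag as the main obstacle is a genuine gap, and the way you propose to close it does not work.

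The estimate you actually derive, $\Psi_j(\zeta,\xi)-\Psi_j(\zeta-\tau,\xi)\le 2\pi\Theta+C\|\xi\|^{m_j}$, yields only $\exp(2\pi\Theta)\exp(C\|\xi\|^{m_j})$, which is exponential, not polynomial, in $\|\xi\|$, and therefore does not satisfy the hypothesis of Proposition \ref{general-suff-cond}. The assertion that ``a tighter argument even gives a polynomial bound'' is exactly the missing proof, and under the literal reading of \eqref{Hormander condition} that you adopt (existence of a single anchor $t^*_\xi$ with $\Im\mathcal{M}_j(t^*_\xi,\xi)\ge-\Theta$) no such bound exists. Take $c_j(t)=\sqrt{2}+i\sin t$ and $p_j(\xi)=\xi$ on $\mathbb{T}^1_t\times\mathbb{T}^1_x$: then $L_{j0}=D_{t_j}+\sqrt{2}\,D_x$ is globally hypoelliptic, $\sup_t\Im\mathcal{M}_j(t,\xi)=|\xi|\ge-\Theta$, yet $\exp\bigl(-\int_{t_j}^{t_j+\tau}\xi\sin s\,ds\bigr)$ reaches $e^{2|\xi|}$; knowing the value of $\Im\mathcal{M}_j(\cdot,\xi)$ at one anchor point gives no control on its integral over the rest of the circle, where it may sit at $-\|\xi\|^{m_j}$. (This example also shows the conclusion itself would fail under that reading, since $b=\sin$ changes sign.)

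The condition \eqref{Hormander condition} must be read as a pointwise lower bound, $\Im\mathcal{M}_j(t,\xi)\ge-\Theta$ for all $t\in\mathbb{T}^1$ and all $\xi$ --- this is how the paper uses it and how $\Theta=\max b$ is chosen in Example \ref{exe-hormander-cond}. With that reading the whole anchor construction, the case split on the sign of $\Im\mathcal{M}_{j0}(\xi)$, and the switching between \eqref{solutions-1} and \eqref{solutions-2} are all unnecessary: one has directly $-\int_{t_j}^{t_j+\tau}\Im\mathcal{M}_j(s,\xi)\,ds\le\tau\Theta\le2\pi\Theta$, so the second integral in Proposition \ref{general-suff-cond} is bounded by the constant $2\pi e^{2\pi\Theta}$, and the proof is complete in one line. (The sign-splitting device you describe is the right tool elsewhere --- it is what the paper uses in Proposition \ref{pro-super-log} --- but it is not needed, and not sufficient, here.)
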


\begin{proof}
	Let $u$ be a distribution  on $\mathbb{T}^{n+N}$ such that  
	$$
	iL_ju=f_j\in C^\infty(\mathbb{T}^{n+N}),
	$$
	for each $j \in \{1,\ldots, n\}$ and consider  solutions 
	\eqref{solutions-2} and  estimate \eqref{ineq-derivative-uj-2}. 
	
	By hypothesis \eqref{Hormander condition} we get
	$$
	\int_{0}^{2\pi} \exp \left(-\int_{t_j}^{t_j+\tau}\Im \mathcal{M}_j(s,\xi)ds \right)d\tau  \leq \exp(2 \pi \Theta_j).
	$$
	
	Therefore, by Proposition \ref{general-suff-cond}, system $\mathbb{L}$ is globally hypoelliptic.
	
\end{proof}

\begin{remark}
It is important to point out that  condition  \eqref{Hormander condition} can be replaced by 
\begin{equation*}\label{Horm_cond-system-2}
\Im \mathcal{M}_j(\cdot, \xi) \leq \Theta_j, \  \forall \xi \in \mathbb{Z}^n,
\end{equation*}
in view of the equivalence between equations \eqref{solutions-1} and \eqref{solutions-2}.
\end{remark}

\begin{corollary}\label{coro-nao-muda-sign}	
	Admit that 	$L_{j0}$ is globally hypoelliptic on $\mathbb{T}_{t_j}^1\times\mathbb{T}_x^N$. If $\Im \mathcal{M}_j( \cdot , \xi)$ does not change sign, for sufficiently large $\xi$, then  system $\mathbb{L}$ is globally hypoelliptic.
\end{corollary}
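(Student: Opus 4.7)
The strategy is to invoke Proposition \ref{general-suff-cond} mode by mode, selecting one of the two equivalent representations \eqref{solutions-1} or \eqref{solutions-2} for each $\xi$ according to the sign of $\Im \mathcal{M}_j(\cdot,\xi)$. Since $L_{j0}$ is globally hypoelliptic, the finiteness of $\mathcal{Z}_j$ and the estimate \eqref{l0j-GH} are already in hand, so the only task is to control the exponential factors that appear in \eqref{ineq-derivative-uj} and \eqref{ineq-derivative-uj-2}.

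First, I would fix $R_0$ such that for every $\|\xi\|\geq R_0$ the function $\Im \mathcal{M}_j(\cdot,\xi)$ has constant sign on $\mathbb{T}^1$, and split $\{\|\xi\|\geq R_0\}$ into
$$
\Lambda^-=\{\xi:\Im \mathcal{M}_j(\cdot,\xi)\leq 0\}, \qquad \Lambda^+=\{\xi:\Im \mathcal{M}_j(\cdot,\xi)\geq 0\}.
$$
For $\xi\in\Lambda^-$ and any $(\zeta,\tau)\in\mathbb{T}^1\times[0,2\pi]$, the integral $\int_{\zeta-\tau}^{\zeta}\Im \mathcal{M}_j(s,\xi)ds$ is non-positive, so the first supremum appearing in Proposition \ref{general-suff-cond} is bounded by $1$. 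Symmetrically, for $\xi\in\Lambda^+$ the integral $-\int_{\zeta}^{\zeta+\tau}\Im \mathcal{M}_j(s,\xi)ds$ is non-positive, so the second supremum is bounded by $1$.

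Next I would reproduce the proof of Proposition \ref{general-suff-cond}, using representation \eqref{solutions-1} on $\Lambda^-$ and \eqref{solutions-2} on $\Lambda^+$. Both routes yield, for any multi-index $\alpha\in\mathbb{Z}_+^n$ and any $\widetilde{N}>0$, an estimate of the form $|\partial_t^\alpha \widehat{u}(t,\xi)|\leq C\|\xi\|^{M_1+\alpha_j m_j-\widetilde{N}}$ on all sufficiently large $\xi$. Since $\widetilde{N}$ is arbitrary, Proposition \ref{prop-smooth} then places $u$ in $C^\infty(\mathbb{T}^{n+N})$, giving global hypoellipticity of $\mathbb{L}$; the finitely many $\xi$ of norm below $R_0$ (or lying in $\mathcal{Z}_j$) contribute only smooth terms and are harmless.

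The only subtlety, and scarcely an obstacle, is that Proposition \ref{general-suff-cond} is formally stated with a single uniform sup-bound valid for all large $\xi$, whereas here the two bounds hold on complementary classes $\Lambda^\pm$. However, the proof of that proposition is carried out Fourier mode by Fourier mode, so choosing the appropriate representation on each class requires no change in the argument beyond the obvious bookkeeping.
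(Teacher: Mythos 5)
Your proof is correct and is essentially the argument the paper intends: the corollary is stated without proof as a consequence of Proposition \ref{general-suff-cond} together with the remark on the equivalence of \eqref{solutions-1} and \eqref{solutions-2}, i.e.\ precisely the per-$\xi$ choice of representation according to the sign of $\Im\mathcal{M}_j(\cdot,\xi)$ that you carry out (the same mode-by-mode splitting the paper uses explicitly in Proposition \ref{pro-super-log}). Your observation that the uniform sup-bounds of Proposition \ref{general-suff-cond} must be applied on the complementary classes $\Lambda^{\pm}$ separately is the right, and only, point of care.
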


	We highlight that when we consider system  \eqref{system-dfifferential} and there exists $j$, such that $\Im\mathcal{M}_j(t_j, \xi) = b_j(t_j)\xi$ does not change sign for sufficiently large $\xi$, then Corollary \ref{coro-nao-muda-sign} recovers the well known {\it Nirenberg–Treves condition (P)} for the vector field $L_j$. Therefore,  system \eqref{system-dfifferential} is globally hypoelliptic.  However, this is not a necessary condition for the class of  pseudo-differential systems studied in this work, as we show in the next example.

\begin{example}\label{exe-hormander-cond}
Motivated by Example 3 in \cite{Avila}, 	let $P(D_x)$ be a pseudo-differential operator  on $\mathbb{T}^1$ with symbol $p(\xi) =\alpha(\xi) + i\beta(\xi)$ defined as follows:
	$$
	\alpha(\xi) = 
	\left\{
	\begin{array}{l}
	\xi^{-1}, \ \textrm{ if } \ \xi<0  \ \textrm{ is odd,} \\
	|\xi|, \ \textrm{ if } \ \xi \leq 0  \ \textrm{ is even,} \\
	0, \ \textrm{ if  } \ \xi > 0,
	\end{array}
	\right. 
	\ \textrm{and } \
	\beta(\xi) = 
	\left\{
	\begin{array}{l}
	1, \ \textrm{ if  } \ \xi \leq 0, \\
	\xi, \ \textrm{ if } \ \xi > 0.
	\end{array}
	\right. 
	$$

	Now, let $a(\cdot)$ and $b(\cdot)$ be two real-valued, smooth  and  positive functions on $\mathbb{T}^1$ with disjoint supports
	and define  $\mathfrak{L}= D_{s} + (a(s)+ ib(s))P(D_x)$. 
	
	Notice that	
	$$
	\Im \mathcal{M}(s,\xi) =
	\left\{
	\begin{array}{l}
	a(s) + {b(s)}/{\xi}, \ \textrm{ if } \ \xi<0  \ \textrm{ is odd,} \\
	a(s) + b(s)|\xi|, \ \textrm{ if } \ \xi \leq 0  \ \textrm{ is even,} \\
	a(s) \xi, \ \textrm{ if } \ \xi > 0,
	\end{array}
	\right.	
	$$
	then  condition \eqref{Hormander condition} is fulfilled by choosing 
	$\Theta = \max_{s \in \mathbb{T}^1} {b(s)}$. Moreover, if $\overline{s}\in supp(b)$ we have $\Im \mathcal{M}(\overline{s},\xi)<0$, for $\xi<0$ odd, and 
	$\Im \mathcal{M}(\overline{s},\xi)>0$, for $\xi<0$ even. 
	
	Now, assume $b_{0} \notin \{m a_{0}; \ m \in \mathbb{Z}\}$. It follows from
	$$
	\Im \mathcal{M}_{0}(\xi) =
	\left\{
	\begin{array}{l}
	a_{0} + {b_{0}}/{\xi}, \ \textrm{ if } \ \xi<0  \ \textrm{ is odd,} \\
	a_{0} + b_{0}|\xi|, \ \textrm{ if } \ \xi \leq 0  \ \textrm{ is even,} \\
	a_{0} \xi, \ \textrm{ if } \ \xi > 0,
	\end{array}
	\right.	
	$$
	that $\Im \mathcal{M}_{0}(\xi) \neq 0$, for all $\xi \in \mathbb{Z}$. Thus, $\mathfrak{L}_0$ is globally hypoelliptic by Proposition 3.2 in \cite{AGKM}. Hence,  any system in the form $\mathbb{L}=\{L_1, \ldots, L_{n-1},\mathfrak{L}\}$, defined on  $\mathbb{T}^{n+1}$, is globally hypoelliptic.
	
\end{example}

\subsubsection{Logarithm growth \label{sec-Log-cond}}
Now, we assume that the symbol $p_j(\xi)$ of some operator $L_j$ has at most logarithmic growth. In this case, supposing that the associated constant operator $L_{j0}$ is globally hypoelliptic, the conclusion of Theorem \ref{general-suff-cond} can be given as follows:

\begin{proposition}\label{prop_log}
	If some $L_{j0}$ is globally hypoelliptic on $\mathbb{T}_{t_j}^1\times\mathbb{T}_x^N$ and $p_j(\xi)=O(\log(\|\xi\|))$, then  system  $\mathbb{L}$ is globally hypoelliptic.
\end{proposition}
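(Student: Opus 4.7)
The plan is to reduce immediately to Proposition \ref{general-suff-cond}. Once we establish the polynomial bound
\begin{equation*}
\sup_{(\zeta,\tau)\in\mathbb{T}^2}\exp\left(\int_{\zeta-\tau}^{\zeta}\Im\mathcal{M}_j(s,\xi)\,ds\right)\leq C\|\xi\|^M,\quad \|\xi\|\geq R,
\end{equation*}
the conclusion follows, since we are already assuming that $L_{j0}$ is globally hypoelliptic.

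First, I would use the logarithmic growth assumption on the symbol, together with the smoothness of $c_j$ on the compact torus $\mathbb{T}^1$. Writing $\mathcal{M}_j(s,\xi)=c_j(s)p_j(\xi)$, the bound $|p_j(\xi)|\leq \kappa\log\|\xi\|$ for $\|\xi\|\geq n_0$ yields
\begin{equation*}
|\Im\mathcal{M}_j(s,\xi)|\leq |c_j(s)|\,|p_j(\xi)|\leq \|c_j\|_{L^\infty(\mathbb{T}^1)}\,\kappa\,\log\|\xi\|,
\end{equation*}
for every $s\in\mathbb{T}^1$ and every $\xi$ with $\|\xi\|\geq n_0$. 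Setting $\widetilde{\kappa}\doteq \|c_j\|_{L^\infty(\mathbb{T}^1)}\kappa$, integration over an interval of length at most $2\pi$ produces
\begin{equation*}
\left|\int_{\zeta-\tau}^{\zeta}\Im\mathcal{M}_j(s,\xi)\,ds\right|\leq 2\pi\widetilde{\kappa}\,\log\|\xi\|,
\end{equation*}
uniformly in $(\zeta,\tau)\in\mathbb{T}^2$, whenever $\|\xi\|\geq n_0$. Exponentiating,
\begin{equation*}
\exp\left(\int_{\zeta-\tau}^{\zeta}\Im\mathcal{M}_j(s,\xi)\,ds\right)\leq \exp\bigl(2\pi\widetilde{\kappa}\log\|\xi\|\bigr)=\|\xi\|^{2\pi\widetilde{\kappa}},
\end{equation*}
which is precisely the polynomial estimate required by Proposition \ref{general-suff-cond} with $C=1$, $M=2\pi\widetilde{\kappa}$ and $R=n_0$.

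There is essentially no obstacle here, which is the whole point of the logarithmic hypothesis: it is tuned so that the exponential of the accumulated imaginary part of $\mathcal{M}_j$ remains polynomially bounded in $\|\xi\|$, and then the polynomial loss is absorbed by the polynomial gain coming from the rapid decay of $\widehat{f}_j$ in the estimates \eqref{ineq-derivative-uj}--\eqref{ineq-derivative-uj-2}. Since the global hypoellipticity of $L_{j0}$ provides the polynomial lower bound $|1-e^{-2\pi i\mathcal{M}_{j0}(\xi)}|\geq C_1\|\xi\|^{-M_1}$, Proposition \ref{general-suff-cond} applies directly and yields the global hypoellipticity of $\mathbb{L}$.
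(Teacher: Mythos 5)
Your proposal is correct and follows essentially the same route as the paper: reduce to Proposition \ref{general-suff-cond} and verify its hypothesis by showing that the logarithmic growth of $p_j$ forces the exponential of $\int_{\zeta-\tau}^{\zeta}\Im\mathcal{M}_j(s,\xi)\,ds$ to be polynomially bounded in $\|\xi\|$. Your estimate $|\Im\mathcal{M}_j(s,\xi)|\leq\|c_j\|_{L^\infty}|p_j(\xi)|$ is in fact a slightly more direct version of the paper's argument, which instead splits $\Im\mathcal{M}_j=a_j\beta_j+b_j\alpha_j$ and bounds the two terms separately according to the signs of $\alpha_j(\xi)$ and $\beta_j(\xi)$; both yield the same conclusion.
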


\begin{proof}
	By Proposition \ref{general-suff-cond}, it is sufficient to exhibit  positive constants $\delta$ and $R$ such that
	\begin{equation*}\label{ex_bounded_log}
\sup_{(t_j,\tau)\in\mathbb{T}^2}	\left |
	\exp \left(-i\int_{t_j-\tau}^{t_j}\mathcal{M}_j(s,\xi)ds \right) 
	\right| 
	\leq  \|\xi\|^{\delta}, \quad \forall
	\, \|\xi\|\geq R.
	\end{equation*}
	
Consider constants $\delta_{1,j}<0, \epsilon_{1,j} <0$ and $\delta_{2,j}>0 ,\epsilon_{2,j}>0$ such that
	\begin{equation}\label{boun_for_int}
	\delta_{1,j} \leq \int_{t_j - \tau}^{t_j}a_j(s) ds \leq \delta_{2,j}
	\ \textrm{ and } \
	\epsilon_{1,j} \leq \int_{t_j - \tau}^{t_j}b_j(s) ds \leq \epsilon_{2,j},
	\end{equation}
for all $(t_j,\tau)\in\mathbb{T}^2$.

	Now, let 	$\kappa_j>0$ and $n_0>0$ 	such that
	\begin{equation}\label{log_for_p}
	|\alpha_j(\xi)| \leq  \log(\|\xi\|^{\kappa_j})
	\ \textrm{ and } \
	|\beta_j(\xi)| \leq \log(\|\xi\|^{\kappa_j}),
		\end{equation}
	for all $\|\xi\|\geq  n_{0}$.

	The inequalities \eqref{boun_for_int}  and \eqref{log_for_p} imply that 
		\begin{equation}\label{alpha-b}
		\alpha_j(\xi) \int_{t_j - \tau}^{t_j}b_j(s) ds \leq   \left \{
		\begin{array}{l}
		\log(\|\xi\|^{\kappa_j \epsilon_{2,j}}), \ \textrm{ if } \  \alpha_j(\xi) >0, \\[2mm]
		\log(\|\xi\|^{-\kappa_j\epsilon_{1,j}}), \ \textrm{ if } \  \alpha_j(\xi) <0,
		\end{array}\right.
		\end{equation}
		and
		\begin{equation*}\label{beta-a}
		\beta_j(\xi) \int_{t_j - \tau}^{t_j}a_j(s) ds \leq   \left \{
		\begin{array}{l}
		\log(\|\xi\|^{\kappa_j \delta_{2,j}}), \ \textrm{ if } \  \beta_j(\xi) >0, \\[2mm]
		\log(\|\xi\|^{-\kappa_j\delta_{1,j}}), \ \textrm{ if } \  \beta_j(\xi) <0,
		\end{array}\right.
		\end{equation*}
		for all $\|\xi\|\geq  n_{0}$  and for all $(t_j,\tau)\in\mathbb{T}^2$.
	
		Hence, it follows that
	\begin{align*}
	\left |
	\exp \left(-i\int_{t_j-\tau}^{t_j}\mathcal{M}_j(s,\xi)ds \right) 
	\right| 
	& = \exp \left(\int_{t_j-\tau}^{t_j}\Im \mathcal{M}_j(s,\xi)ds \right) \\
	& = 
	\exp \left({\alpha_j(\xi) \int_{t_j - \tau}^{t_j}b_j(s) ds} + {\beta_j(\xi) \int_{t_j - \tau}^{t_j}a_j(s) ds}\right) \\
	& \leq \|\xi\|^{\delta}, 
	\end{align*}
	for all $\|\xi\|\geq  n_{0}$  and for all $(t_j,\tau)\in\mathbb{T}^2$, where 
	$
	\delta = 
	\max\{\kappa_j \epsilon_{2,j}, -\kappa_j\epsilon_{1,j}\}+\max\{\kappa_j \delta_{2,j}, -\kappa_j\delta_{1,j}\}.
	$
	
	\end{proof}

\begin{example}\label{log grow example}
	Let $b$ be a real-valued function on $\mathbb{T}^1$ 
	that changes sign and  $b_0 \neq 0$. 	Consider a pseudo-differential operator $P(D_x)$ on $\mathbb{T}^1$ defined by the symbol $p(\xi) = \log(1+ |\xi|)$ and set $\mathscr{L}=D_t + ib(t)P(D_x) $. Under these conditions, any  system in the form $\mathbb{L} = \{L_1, \ldots, L_{n-1},\mathscr{L}\}$ is globally hypoelliptic on $\mathbb{T}^{n+1}$. 
\end{example}

\begin{remark}
	We point out that the operator $\mathscr{L}$ in Example \ref{log grow example} does not satisfy the H\"{o}rmander's  condition \eqref{Hormander condition}. On the other hand, the operator $\mathfrak{L}$, in Example \ref{exe-hormander-cond}, does not meet any of the logarithmic conditions in Definition \ref{growth}. 
\end{remark}

\subsubsection{Super-logarithm growth \label{sec-S-Log-cond}}

The goal of this section is to  analyze the global hypoellipticity of  system $\mathbb{L}$, in the case where there exists $j \in \mathcal{L}$ such that $L_{j0}$  is globally hypoelliptic and  $\Im\mathcal{M}_j$ satisfies either condition (ii) or condition (iii) in Defini\-tion \ref{growth}.

\begin{proposition}\label{pro-super-log}
	If some $L_{j0}$ is globally hypoelliptic on $\mathbb{T}_{t_j}^1\times\mathbb{T}_x^N$ and  either condition (ii) or condition (iii), both from  Definition \ref{growth}, is fulfilled, then  $\mathbb{L}$ is globally hypoelliptic.
\end{proposition}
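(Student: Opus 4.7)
The plan is to mimic the proof of Proposition \ref{general-suff-cond}, but to exploit the fact that the two solution formulas \eqref{solutions-1} and \eqref{solutions-2} represent the \emph{same} Fourier coefficient $\widehat{u}(t,\xi)$. Hence, for each $\xi\notin \mathcal{Z}_j$, we may freely choose the representation that yields the better estimate. The hypothesis that $a_j$ (resp.\ $b_j$) does not change sign will guarantee that the integral $\int_{t_j-\tau}^{t_j} a_j(s)ds$ (resp.\ $\int b_j$) has a definite sign, and then we will pick the direction of integration so that the super-logarithmic factor multiplies a quantity with the right sign, forcing the contribution to be $\leq 1$.

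To set up, take $u\in\mathcal{D}'(\mathbb{T}^{n+N})$ with $iL_ku=f_k\in C^\infty(\mathbb{T}^{n+N})$ for every $k$, and fix the distinguished index $j$. Since $L_{j0}$ is globally hypoelliptic, $\mathcal{Z}_j$ is finite and \eqref{l0j-GH} holds; the inequalities \eqref{ineq-derivative-uj} and \eqref{ineq-derivative-uj-2} from the proof of Proposition \ref{general-suff-cond} remain valid. Expanding $\Im\mathcal{M}_j(s,\xi)=a_j(s)\beta_j(\xi)+b_j(s)\alpha_j(\xi)$, it suffices to find positive constants $M,R$ such that, for every $\xi$ with $\|\xi\|\geq R$ and every $t_j\in\mathbb{T}^1$, at least one of the quantities
\[
\mathcal{E}_{\pm}(t_j,\xi)\doteq \int_0^{2\pi}\!\exp\!\Bigl(\pm\beta_j(\xi)\!\int_{I_\pm} a_j(s)ds \pm \alpha_j(\xi)\!\int_{I_\pm}b_j(s)ds\Bigr)d\tau
\]
is bounded by $C\|\xi\|^M$, where $I_+=[t_j-\tau,t_j]$ (giving the sign $+$) and $I_-=[t_j,t_j+\tau]$ (giving $-$). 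Then Proposition \ref{prop-smooth} concludes the argument exactly as in Proposition \ref{general-suff-cond}.

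Assume condition \textit{(ii)}, with $a_j\geq 0$ on $\mathbb{T}^1$ (the case $a_j\leq 0$ is identical with the opposite sign convention). Then both $\int_{I_+}a_j\,ds$ and $\int_{I_-}a_j\,ds$ are non-negative and bounded by $2\pi a_{j0}$. Given $\xi$ with $\|\xi\|$ large, I split according to the sign of $\beta_j(\xi)$: if $\beta_j(\xi)\leq 0$ I use \eqref{solutions-1}, so that $\beta_j(\xi)\int_{I_+}a_j\,ds\leq 0$; if $\beta_j(\xi)>0$ I use \eqref{solutions-2}, so that $-\beta_j(\xi)\int_{I_-}a_j\,ds\leq 0$. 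In either case, the $\beta_j$-contribution to the exponential is non-positive. The remaining $\alpha_j$-contribution is controlled by $|\alpha_j(\xi)|\cdot 2\pi\|b_j\|_{L^\infty}\leq \widetilde{C}\log\|\xi\|$ by the logarithmic growth hypothesis on $\alpha_j$, which yields a polynomial bound $\exp(\widetilde{C}\log\|\xi\|)=\|\xi\|^{\widetilde{C}}$. Substituting into the relevant estimate among \eqref{ineq-derivative-uj} and \eqref{ineq-derivative-uj-2}, and letting $\widetilde{N}$ be arbitrary, Proposition \ref{prop-smooth} delivers $u\in C^\infty(\mathbb{T}^{n+N})$.

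For condition \textit{(iii)} the roles of $(a_j,\beta_j)$ and $(b_j,\alpha_j)$ are exchanged: one assumes $b_j\geq 0$ (WLOG), splits by the sign of $\alpha_j(\xi)$ to kill the super-logarithmic $\alpha_j(\xi)\int b_j$ term, and uses $\beta_j(\xi)=O(\log\|\xi\|)$ to dominate the $\beta_j\int a_j$ term polynomially. The main conceptual obstacle, which I expect to be the one subtle point, is precisely to observe that Proposition \ref{general-suff-cond} need not be invoked as a black box: its proof permits a $\xi$-dependent choice of representation, and only with this refinement can one absorb a symbol $p_j(\xi)$ whose real or imaginary part grows faster than any logarithm. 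Once this observation is made, the remaining estimates are entirely analogous to those carried out in the proof of Proposition \ref{prop_log}.
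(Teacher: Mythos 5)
Your proposal is correct and follows essentially the same route as the paper: the paper likewise partitions frequencies into $\beta_-=\{\xi:\beta_j(\xi)\leq 0\}$ and $\beta_+=\{\xi:\beta_j(\xi)\geq 0\}$, uses \eqref{solutions-1} on $\beta_-$ and \eqref{solutions-2} on $\beta_+$ so that the super-logarithmic term $a_j(s)\beta_j(\xi)$ contributes with a non-positive sign, bounds the remaining $\alpha_j$-term polynomially via the logarithmic hypothesis, and handles $a_j\leq 0$ by interchanging the two representations. Your observation that the choice of representation may be made $\xi$-dependently (rather than invoking Proposition \ref{general-suff-cond} as a black box) is exactly the point of the paper's argument.
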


\begin{proof}
We restrict the proof only to the case where condition (ii) holds, since the other one is similar to this case. Suppose, for a moment, that  $a_j(\cdot) \geq 0$ and define the following sets
	$$
	\beta_- = \{\xi \in \mathbb{Z}^N; \ \beta_j(\xi) \leq 0\}
	\ \textrm{ and } \
	\beta_+ = \{\xi \in \mathbb{Z}^N; \ \beta_j(\xi) \geq 0\}.
	$$
	
	If $\xi \in \beta_-$ is large enough, we consider $\widehat{u}(t,\xi)$ as in 
	\eqref{solutions-1}. On the other hand, for  $\xi \in \beta_+$ large enough, we use the  equivalent expression \eqref{solutions-2}.
	
	Notice that if $\xi \in \beta_-$, then it follows by \eqref{alpha-b} the existence of a constant $\gamma_1$  such that
	\begin{equation}\label{alpha-b-super-1}
	\alpha_j(\xi) \int_{t_j - \tau}^{t_j}b_j(s) ds \leq \log(\|\xi\|^{\gamma_1}).
	\end{equation}
	
	Since  $a_j(s)\beta_j(\xi) \leq 0$, we have
	\begin{align*}
	\left |
	\exp \left(-i\int_{t_j-\tau}^{t_j}\mathcal{M}_j(s,\xi)ds \right) 
	\right| 
	& \leq \|\xi\|^{\gamma_1},
	\end{align*}
	for $\|\xi\|$ large enough.
	
	Now, if $\xi \in \beta_+$, there exist a constant $\gamma_2$  such that 
	\begin{equation}\label{alpha-b-super-1}
	-\alpha_j(\xi) \int_{t_j}^{t_j+ \tau}b_j(s) ds \leq 
	\log(\|\xi\|^{\gamma_2}).
	\end{equation}
	
	In this case, $a_j(s)\beta_j(\xi) \geq 0$ and 
	\begin{align*}
	\left |
	\exp \left(i\int_{t_j}^{t_j+ \tau }\mathcal{M}_j(s,\xi)ds \right) 
	\right| 
	& \leq \|\xi\|^{\gamma_2}.
	\end{align*}

Finally, if $a_j(\cdot)\leq 0$,  the proof is completely analogous to the previous one   and can be obtained by interchanging the use of  solutions \eqref{solutions-1} and \eqref{solutions-2}.

\end{proof}

\begin{example}
	Inspired by Example 4.11 in  \cite{AGKM}, let $b(\cdot)$ be the $2\pi-$periodic extension of a real smooth nonzero function defined on $(0,2\pi)$ with integral equals to zero. Additionally, let $a(\cdot)$ be the $2\pi$-periodic extension of the function $1-\psi,$ where $\psi\in C^\infty_c((0,2\pi),\mathbb{R}),$ $0\leq \psi(t)\leq 1$ and $\psi\equiv1$ in a neighborhood of  the support of $b.$
	
	Consider a pseudo-differential operator $P(D_x)$ defined on $\mathbb{T}^1$ with symbol $p(\xi)=1+i(|\xi| \log(1 + |\xi|))$. Then
	\begin{equation*}
	\mathcal{L}=D_t +(a(t)+ib(t))P(D_x), \quad  (t,x)\in \mathbb{T}^{n+1},
	\end{equation*}
	is globally hypoelliptic and, consequently, so is  $\mathcal{L}_0$. Therefore, any system in the form $\mathbb{L}=\{L_1, \ldots, L_{n-1}, \mathcal{L}\}$  is globally hypoelliptic on $\mathbb{T}^{n+1}$.
\end{example}

\section{Reduction to normal form \label{sec-reduction}}

As remarked in the beginning of Section \ref{sec-suff-cond}, the global hypoellipticity of $\mathbb{L}_0$ is not enough  to guarantee the same for system $\mathbb{L}$. In addition, the sufficient conditions presented in Theorem \ref{main-suff} require some operator $L_{j0}$ being globally hypoelliptic, and, therefore, by Corollary \ref{coro-const-system},  system $\mathbb{L}_0$ is also globally hypoelliptic. However, it is possible for  system $\mathbb{L}$ to be globally hypoelliptic, even though the same is not true for some operator $L_{j0}$.

In fact, consider the system $\mathbb{L}$, defined on $\mathbb{T}^3$, given by
$$
L_j = D_{t_j} + a_j(t_j) P_j(D_x), \quad j=1,2, 
$$
where $\int_{0}^{2\pi}a_j(s)ds=1$ and the symbols $p_j(\xi)$ are those defined in Example \ref{example const coeff}. In this case, both $L_{j0}$ are not  globally hypoelliptic. On the other hand, since each function $\Im\mathcal{M}_j(t_j,\xi)=a_j(t)\beta_j(\xi)$ satisfies condition (iii) in Definition \ref{growth}, system $\mathbb{L}$ is globally hypoelliptic as a consequence of the results in this section (see Corollary \ref{coro-gene-psi}).

The main goal of this section is to establish conditions so that the global hypoellipticity of the associated constant system $\mathbb{L}_0$ implies the same for system $\mathbb{L}$, without imposing any further conditions on the operators $L_{j0}$.

In order to not overload our notation, we will write  
$$
\mathbb{L} \sim \mathbb{L}_0
$$
to say that system $\mathbb{L}$ is globally hypoelliptic  if and only if system  $\mathbb{L}_0$ is globally hypoelliptic.

Recalling here the notations $c_j(t_j) = a_j(t_j) + ib_j(t_j)$, $t_j\in\mathbb{T}^1$ and $p_j(\xi) = \alpha_j(\xi) + i \beta_j(\xi)$, $\xi\in\mathbb{Z}^N$, we consider the $2\pi-$periodic functions 
given by
$$
A_j(t_j)  = \int_{0}^{t_j}a_j(s)ds - a_{j0} t_j \ \textrm{ and } \
B_j(t_j)  = \int_{0}^{t_j}b_j(s)ds - b_{j0} t_j
$$
and also the functions 
$$
\mathcal{A}(t,\xi) = \sum_{j=1}^{n} p_j(\xi) A_j(t_j) \ \textrm{ and } \
\mathcal{B}(t,\xi) = \sum_{j=1}^{n} p_j(\xi) B_j(t_j).
$$

\begin{theorem}\label{t-general-reduction}
	Suppose that for each $j\in\{1,\ldots,n\}$ there exist positive cons\-tants $C_j$, $\kappa_j$ and $R_j$ satisfying
	\begin{equation}\label{general-cond-reduc}
	\sup_{\zeta \in \mathbb{T}}\left\{ 
	\exp \left(\int_{0}^{\zeta} \Im \mathcal{M}_{j}(s,\xi) ds\right) \right\} \leq 
	C_j \|\xi\|^{\kappa_j}, \quad  \|\xi\| \geq R_j.
	\end{equation}
	
	Then, 
	\begin{equation}\label{general-Psi}
	\Psi u = \sum_{\xi \in \mathbb{Z}^N} \widehat{u}(t, \xi)
	e^{\mathcal{B}(t,\xi) -i \mathcal{A}(t,\xi) } e^{i x \cdot  \xi}
	\end{equation}
	defines an isomorphism on both spaces $\mathcal{D}'(\mathbb{T}^{n+N})$ and $C^{\infty}(\mathbb{T}^{n+N})$, with inverse 
	\begin{equation*}
	\Psi^{-1} u = \sum_{\xi \in \mathbb{Z}^N} \widehat{u}(t, \xi)
	e^{-\mathcal{B}(t,\xi) + i \mathcal{A}(t,\xi) } e^{i x \cdot  \xi},
	\end{equation*}
	where  $u = \sum_{\xi \in \mathbb{Z}^N} \widehat{u}(t, \xi)  e^{i x \cdot  \xi}$. Moreover, $\mathbb{L} \sim \mathbb{L}_{0}$.
\end{theorem}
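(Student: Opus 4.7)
The strategy is to establish $\Psi$ and $\Psi^{-1}$ as mutually inverse continuous automorphisms of both $\mathcal{D}'(\mathbb{T}^{n+N})$ and $C^{\infty}(\mathbb{T}^{n+N})$, and then to verify the conjugation $L_j = \Psi \circ L_{j0} \circ \Psi^{-1}$ for each $j$; granted this conjugation, the equivalence $\mathbb{L} \sim \mathbb{L}_0$ will be a formal consequence. First I would compute the real part of the exponent algebraically: writing $p_j(\xi) = \alpha_j(\xi) + i\beta_j(\xi)$ and using the definitions of $A_j$ and $B_j$, a short calculation gives
$$
\Re\bigl(\mathcal{B}(t,\xi) - i\mathcal{A}(t,\xi)\bigr) = \sum_{j=1}^{n} \bigl[\alpha_j(\xi) B_j(t_j) + \beta_j(\xi) A_j(t_j)\bigr] = \sum_{j=1}^{n} \int_{0}^{t_j} \bigl(\Im \mathcal{M}_j(s,\xi) - \Im \mathcal{M}_{j0}(\xi)\bigr) ds =: F(t,\xi),
$$
a function $2\pi$-periodic in each $t_j$ which vanishes whenever some $t_j$ equals $0$ or $2\pi$.

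The main obstacle will be upgrading the one-sided hypothesis \eqref{general-cond-reduc} to a two-sided polynomial bound on $|e^{\pm(\mathcal{B} - i\mathcal{A})}|$ together with all its $t$-derivatives. From \eqref{general-cond-reduc} the integral $\int_0^{\zeta} \Im \mathcal{M}_j(s,\xi) ds$ is bounded above by $\kappa_j \log\|\xi\| + \log C_j$ on $[0, 2\pi]$; exploiting the periodicity of $F_j(t_j,\xi) := \int_0^{t_j}(\Im\mathcal{M}_j - \Im\mathcal{M}_{j0}) ds$ in $t_j$, together with the fact that $F_j$ vanishes at the endpoints, one should extract a two-sided polynomial bound on $F_j$ and hence on $|e^{\pm F}|$. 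For higher $t$-derivatives, the identity $\partial_{t_j}(\mathcal{B} - i\mathcal{A}) = p_j(\xi)\bigl(B_j'(t_j) - iA_j'(t_j)\bigr) = -i\bigl(\mathcal{M}_j(t_j,\xi) - \mathcal{M}_{j0}(\xi)\bigr)$ brings down factors of polynomial growth in $\xi$ (by \eqref{bound-symb}), so an induction on derivative order preserves the polynomial bounds. Proposition \ref{prop-smooth} then certifies that $\Psi$ and $\Psi^{-1}$ are well defined on both $\mathcal{D}'(\mathbb{T}^{n+N})$ and $C^{\infty}(\mathbb{T}^{n+N})$, and the identities $\Psi \Psi^{-1} = \Psi^{-1}\Psi = \mathrm{Id}$ are immediate from their definitions.

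With the isomorphism in hand, I would verify the conjugation $L_j \Psi = \Psi L_{j0}$ by direct termwise computation: using $\partial_{t_j}(\mathcal{B}-i\mathcal{A}) = -i(\mathcal{M}_j - \mathcal{M}_{j0})$ and $L_j = i^{-1}\partial_{t_j} + c_j(t_j) P_j(D_x)$, one finds
$$
L_j \Psi u = \sum_{\xi} e^{\mathcal{B} - i\mathcal{A}}\bigl[i^{-1}\partial_{t_j}\widehat{u}(t,\xi) - (\mathcal{M}_j - \mathcal{M}_{j0})\widehat{u}(t,\xi) + \mathcal{M}_j(t_j,\xi)\widehat{u}(t,\xi)\bigr] e^{ix\cdot\xi} = \Psi\bigl(L_{j0} u\bigr),
$$
where the cancellation inside the brackets leaves $i^{-1}\partial_{t_j}\widehat{u} + \mathcal{M}_{j0}(\xi)\widehat{u}$, which is precisely the $\xi$-block of $L_{j0} u$. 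The equivalence $\mathbb{L} \sim \mathbb{L}_0$ then follows: if $u \in \mathcal{D}'(\mathbb{T}^{n+N})$ satisfies $L_j u \in C^{\infty}$ for all $j$, then $v := \Psi^{-1} u \in \mathcal{D}'(\mathbb{T}^{n+N})$ satisfies $L_{j0} v = \Psi^{-1} L_j u \in C^{\infty}$ for all $j$; global hypoellipticity of $\mathbb{L}_0$ yields $v \in C^{\infty}$, hence $u = \Psi v \in C^{\infty}$. The reverse implication is symmetric.
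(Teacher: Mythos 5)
Your overall architecture coincides with the paper's: obtain polynomial bounds on $\sup_t|\partial_t^\gamma e^{\pm(\mathcal{B}-i\mathcal{A})}|$, invoke Proposition \ref{prop-smooth} to conclude that $\Psi$ and $\Psi^{-1}$ preserve $\mathcal{D}'$ and $C^\infty$, verify $L_j\Psi=\Psi L_{j0}$ termwise, and transfer global hypoellipticity through the conjugation. The algebra you record, namely $\Re(\mathcal{B}-i\mathcal{A})=\sum_j\int_0^{t_j}(\Im\mathcal{M}_j-\Im\mathcal{M}_{j0})\,ds$ and $\partial_{t_j}(\mathcal{B}-i\mathcal{A})=-i(\mathcal{M}_j-\mathcal{M}_{j0})$, is correct, and so are the conjugation computation and the final transfer argument (aside from the minor slip that $F$ vanishes only when \emph{every} $t_j$ is an endpoint, not when some $t_j$ is).

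The genuine gap sits exactly where you flag ``the main obstacle'': passing from the one-sided hypothesis \eqref{general-cond-reduc} to a two-sided polynomial bound on $e^{\pm F}$, where $F_j(t_j,\xi)=\int_0^{t_j}(\Im\mathcal{M}_j-\Im\mathcal{M}_{j0})\,ds$. You assert that periodicity of $F_j$ together with $F_j(0)=F_j(2\pi)=0$ lets one ``extract'' the bound, but this step cannot be carried out. Writing $G_j(\zeta)=\int_0^\zeta\Im\mathcal{M}_j(s,\xi)\,ds$, hypothesis \eqref{general-cond-reduc} gives only the upper bound $G_j(\zeta)\le\kappa_j\log\|\xi\|+\log C_j$ with no lower bound, and since $F_j(t_j)=G_j(t_j)-\tfrac{t_j}{2\pi}G_j(2\pi)$, neither $\sup F_j$ nor $\inf F_j$ is thereby controlled. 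Concretely, take $n=N=1$, $c(t)=g(t)$ a nonnegative bump supported in $[\pi/2,\pi]$ with $\int g=1$, and $p(\xi)=-i|\xi|$; then $G(\zeta)=-|\xi|\int_0^\zeta g\le 0$, so \eqref{general-cond-reduc} holds with $C=1$, yet $F(\pi/2,\xi)=|\xi|/4$, so $|e^{\mathcal{B}-i\mathcal{A}}|$ grows exponentially in $\xi$ and $\Psi$ does not even map $\mathcal{D}'$ into $\mathcal{D}'$. Thus no argument can produce the required estimate from \eqref{general-cond-reduc} alone; what is needed is control of the oscillation $\sup_{\zeta,\zeta'}\bigl(G_j(\zeta)-G_j(\zeta')\bigr)$, i.e.\ a hypothesis of the type used in Proposition \ref{general-suff-cond} (a bound on $\sup_{(\zeta,\tau)}\exp\int_{\zeta-\tau}^{\zeta}\Im\mathcal{M}_j$), which does yield $|F_j|=O(\log\|\xi\|)$ and hence both bounds. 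You correctly identified the crux --- the paper's own proof is itself thin at precisely this point, absorbing the terms $-t_j\Im\mathcal{M}_{j0}(\xi)$ into a $\xi$-independent constant and dismissing $\Psi^{-1}$ as ``similar'' --- but the periodicity argument you propose does not close it.
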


\begin{proof}
Firstly, let us prove that $\Psi$  is well defined.	By Leibniz's formula and \eqref{bound-symb}, it follows that for each $\gamma = (\gamma_1, \ldots, \gamma_n) \in \mathbb{Z}^{n}_+$, there are  positive constants $C$ and $n_0$ such that
\begin{equation}\label{exp_A}
|\partial_t^\gamma e^{- i \mathcal{A}(t,\xi)}| \leq C \|\xi\|^{\omega}  
\exp \left\{\sum_{j=1}^{n}\beta_j(\xi)A_j(t_j) \right\}
\end{equation}
and
\begin{equation}\label{exp_B}
|\partial_t^\gamma e^{\mathcal{B}(t,\xi)}| 
\leq C \|\xi\|^{\omega}  
\exp \left\{\sum_{j=1}^{n}\alpha_j(\xi)B_j(t_j) \right\},
\end{equation}
for all $\|\xi\|\geq n_0$, where $\omega = \sum_{j=1}^{n}m_j \gamma_j.$

Therefore, there are  positive constants $C$ and $n_0$ such that
\begin{align*}
\sup_{t \in \mathbb{T}^n}|\partial_t^\gamma e^{\mathcal{B}(t,\xi) -i \mathcal{A}(t,\xi) }| 
& \leq C M \|\xi\|^{\omega}  \prod_{j=1}^{n} \sup_{t_j \in \mathbb{T}_j}\left\{ 
\exp \left(\int_{0}^{t_j} \Im \mathcal{M}_{j}(s,\xi) ds\right) \right\},
\end{align*}
for all $\|\xi\|\geq n_0$, where $\omega = \sum_{j=1}^{n}m_j \gamma_j$ 
and 
$$
M = \sup_{t \in \mathbb{T}^n}\left\{ \exp \left(-\sum_{j=1}^{n}  t_j( a_{j0} + b_{j0})\right) \right\}.
$$ 

Hence, 
\begin{align*}
\sup_{t \in \mathbb{T}^n}|\partial_t^\gamma \left( \widehat{u}(t, \xi) e^{\mathcal{B}(t,\xi) -i \mathcal{A}(t,\xi) }\right)| 
& \leq C' \|\xi\|^{\omega+\kappa} 
 \sum_{\sigma = 0}^{\gamma}\binom{\gamma}{\sigma} \sup_{t \in \mathbb{T}^n}  |\partial_t^{\gamma - \sigma} \widehat{u}(t, \xi)|
\end{align*}
as $\|\xi\| \rightarrow \infty$, where $\kappa = \sum_{j=1}^{n}\kappa_j$.

Now, if $u \in \mathcal{D}'(\mathbb{T}^{n+N})$ then each term $|\partial_t^{\gamma - \sigma} \widehat{u}(t, \xi)|$ is bounded by some polynomial, implying that $\Psi u$ belongs to $\mathcal{D}'(\mathbb{T}^{n+N})$. On the other hand, if $u$ is a smooth function on  $\mathbb{T}^{n+N}$, then all the derivatives of $\widehat{u}(t, \xi)$ converge to zero faster than any polynomial and, consequently, $\Psi u$ belongs to $C^{\infty}(\mathbb{T}^{n+N})$.  A similar argumentation shows that $\Psi^{-1}$ is well defined.

Since the linearity of $\Psi$ is evident, $\mathbb{L}\sim \mathbb{L}_0$
is all that remains to be proved.  By Theorem \ref{necessary condition}, 
showing the global hypoellipticity of $\mathbb{L}_0$  would be enough to imply the same for $\mathbb{L}$.  Then, assume that $\mathbb{L}_0$ is globally hypoelliptic and consider $u \in \mathcal{D}'(\mathbb{T}^{n+N})$ to be a solution of the following  equations 
$$
L_{j} u = f_j \in C^{\infty}(\mathbb{T}^{n+N}), \quad j = 1, \ldots, n.
$$

Since $L_j = \Psi\circ L_{j0}  \circ  \Psi^{-1}$, we obtain
$$
L_{j0} (\Psi^{-1} u) = \Psi (f_j)  \in C^{\infty}(\mathbb{T}^{n+N}), \quad j = 1, \ldots, n,
$$
and, by the hypothesis on  $\mathbb{L}_0$, we get 
$\Psi^{-1} u \in  C^{\infty}(\mathbb{T}^{n+N})$ implying that $u$ is a smooth function on $\mathbb{T}^{n+N}$. Hence, $\mathbb{L}$ is globally hypoelliptic.

\end{proof}

\begin{corollary}\label{reduction-p-log}
	If $p_j(\xi)=O(\log(\|\xi\|)),$ for all $j\in\{1, \ldots n\}$, then $\mathbb{L} \sim \mathbb{L}_0$.  
\end{corollary}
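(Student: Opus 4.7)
The plan is simply to verify the hypothesis \eqref{general-cond-reduc} of Theorem \ref{t-general-reduction} under the logarithmic growth assumption on each symbol $p_j$, and then invoke that theorem directly. So first I would decompose
\[
\Im \mathcal{M}_j(s,\xi) = a_j(s)\beta_j(\xi) + b_j(s)\alpha_j(\xi),
\]
which gives, for any $\zeta \in \mathbb{T}^1$,
\[
\int_0^\zeta \Im \mathcal{M}_j(s,\xi)\,ds = \beta_j(\xi)\int_0^\zeta a_j(s)\,ds + \alpha_j(\xi)\int_0^\zeta b_j(s)\,ds.
\]

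Next I would use that $a_j$ and $b_j$ are smooth on the compact torus $\mathbb{T}^1$, so the integrals above are uniformly bounded in $\zeta \in \mathbb{T}^1$ by some constant $D_j := 2\pi \max(\|a_j\|_\infty, \|b_j\|_\infty)$. Combined with the hypothesis $p_j(\xi) = O(\log(\|\xi\|))$, which yields $\kappa_j > 0$ and $n_0 > 0$ such that $|\alpha_j(\xi)|, |\beta_j(\xi)| \leq \kappa_j \log(\|\xi\|)$ for $\|\xi\| \geq n_0$, I obtain
\[
\left|\int_0^\zeta \Im \mathcal{M}_j(s,\xi)\,ds\right| \leq 2 D_j \kappa_j \log(\|\xi\|) = \log\bigl(\|\xi\|^{2D_j\kappa_j}\bigr),
\]
uniformly in $\zeta \in \mathbb{T}^1$, for $\|\xi\| \geq n_0$. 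Exponentiating gives
\[
\sup_{\zeta \in \mathbb{T}^1} \exp\left(\int_0^\zeta \Im \mathcal{M}_j(s,\xi)\,ds\right) \leq \|\xi\|^{2 D_j \kappa_j},
\]
which is precisely the estimate \eqref{general-cond-reduc} required by Theorem \ref{t-general-reduction}, with constants $C_j = 1$, $\kappa_j$ replaced by $2 D_j \kappa_j$, and $R_j = n_0$. Applying that theorem yields the conjugation by $\Psi$ and hence $\mathbb{L} \sim \mathbb{L}_0$.

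There is no real obstacle here: the only mild subtlety is keeping track of the sign — we cannot simply drop the absolute value when bounding the integral of $\Im \mathcal{M}_j$, because the quantity being exponentiated is the signed integral, not its modulus. Since the hypothesis of Theorem \ref{t-general-reduction} only requires an upper bound on the exponential (not on its reciprocal), bounding $|\int_0^\zeta \Im \mathcal{M}_j(s,\xi)\,ds|$ from above is enough to control both $\exp(\pm \int_0^\zeta \Im \mathcal{M}_j)$, which is in fact more than what is needed. The logarithmic growth is exactly what makes the exponential polynomial in $\|\xi\|$, so the argument goes through cleanly.
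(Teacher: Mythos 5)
Your proof is correct and follows essentially the same route as the paper: the paper's proof simply says that estimate \eqref{general-cond-reduc} follows by a slight modification of the argument in Proposition \ref{prop_log}, and your write-up is precisely that modification (bounding $\int_0^\zeta a_j$ and $\int_0^\zeta b_j$ uniformly, using the logarithmic bounds on $\alpha_j,\beta_j$, and exponentiating) before invoking Theorem \ref{t-general-reduction}. Your closing remark about needing an upper bound on the signed integral rather than its modulus is accurate and harmless, since bounding the absolute value suffices.
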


\begin{proof}
	Estimate \eqref{general-cond-reduc} can be obtained by  a slight modification in the arguments used in Proposition \ref{prop_log}.
\end{proof}

In contrast with differential systems, as considered in Theorem \ref{The-diff-system}, it is possible, by Corollary \ref{reduction-p-log}, to obtain a globally hypoelliptic system, where the imaginary part of each function $c_j$ changes sign.

\begin{example}
	Let $P(D_x)$ be a pseudo-differential operator on $\mathbb{T}^1$ with symbol $p(\xi)= \log(1+ |\xi|)$ and consider the system
	$\mathbb{L}$  on $\mathbb{T}^{n+1}$ defined by
	$$
L_j = D_{t_j} + c_j(t_j)P(D_x), \quad j=1,\ldots,n.
	$$ 
	Then, $\mathbb{L}$ is globally hypoelliptic if and only if  either $b_{j0} \neq 0$ for some $j$ or the vector $(a_{10}, \ldots, a_{n0})$ is neither rational nor Liouville.	
\end{example}

The next result shows that we can consider different types of growth for the function $\Im \mathcal{M}_j$, in order to obtain the conclusion of Theorem \ref{t-general-reduction}. More precisely, since any of the conditions in Definition \ref{growth} imply  \eqref{general-cond-reduc}, we  get the following result.

\begin{corollary}\label{coro-gene-psi}
	If $\{1, \ldots, n\} \subset \mathcal{H} \cup \mathcal{L}$, then $\mathbb{L} \sim \mathbb{L}_0$.
\end{corollary}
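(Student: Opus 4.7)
The plan is to apply Theorem \ref{t-general-reduction}; under the hypothesis every index $j\in\{1,\ldots,n\}$ lies in $\mathcal{H}\cup\mathcal{L}$, and it suffices to verify the estimate \eqref{general-cond-reduc} separately for each such $j$. The work then splits according to which specific condition of Definition \ref{growth} is satisfied, and within each case the verification essentially reuses the estimates already established in Propositions \ref{prop-hormander-system}, \ref{prop_log}, and \ref{pro-super-log}.

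For $j\in\mathcal{L}$ satisfying condition (i), I would reproduce the calculation of Corollary \ref{reduction-p-log}: the hypothesis $p_j(\xi)=O(\log\|\xi\|)$ controls both $\alpha_j(\xi)$ and $\beta_j(\xi)$, so $|\Im\mathcal{M}_j(s,\xi)|\le(\|a_j\|_\infty+\|b_j\|_\infty)\cdot O(\log\|\xi\|)$, and integrating over the interval $[0,\zeta]\subset[0,2\pi]$ gives $\int_0^\zeta\Im\mathcal{M}_j(s,\xi)\,ds\le\kappa_j\log\|\xi\|+\log C_j$ for appropriate constants, which is exactly \eqref{general-cond-reduc} after exponentiation. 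For conditions (ii) and (iii) I would decompose
\[
\int_0^\zeta\Im\mathcal{M}_j(s,\xi)\,ds=\beta_j(\xi)\int_0^\zeta a_j(s)\,ds+\alpha_j(\xi)\int_0^\zeta b_j(s)\,ds,
\]
treat the logarithmically-growing factor by the argument above, and handle the super-logarithmic factor by exploiting the fixed sign of $a_j$ (respectively $b_j$) to make the corresponding product $\beta_j(\xi)\int_0^\zeta a_j$ (respectively $\alpha_j(\xi)\int_0^\zeta b_j$) have a controllable sign, as in the branching argument used in the proof of Proposition \ref{pro-super-log}.

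For $j\in\mathcal{H}$ I would mirror the proof of Proposition \ref{prop-hormander-system}: the Hörmander condition \eqref{Hormander condition} yields, after the translation-invariance trick used there, a uniform upper bound on the exponentiated integral appearing in \eqref{general-cond-reduc} (using, when convenient, the equivalence of \eqref{general-cond-reduc} with the dual estimate noted in the remark following Proposition \ref{prop-hormander-system}). Once \eqref{general-cond-reduc} has been checked for every $j$, Theorem \ref{t-general-reduction} immediately produces the conjugating isomorphism $\Psi$ and delivers the equivalence $\mathbb{L}\sim\mathbb{L}_0$.

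The main obstacle I anticipate lies in the super-logarithmic subcases (ii) and (iii) of $\mathcal{L}$: there a direct estimate on $|\int_0^\zeta\Im\mathcal{M}_j(s,\xi)\,ds|$ is genuinely not polynomial in $\|\xi\|$, and the argument must instead exploit the sign structure of $a_j$ (or $b_j$) together with the choice of the proper solution branch—\eqref{solutions-1} versus \eqref{solutions-2}—in a manner parallel to the proof of Proposition \ref{pro-super-log}. The bookkeeping of which $\xi$'s require which branch, and how this interacts with the anchoring at $\zeta=0$ in \eqref{general-cond-reduc}, is where most of the technical care will be needed; the remaining cases reduce to essentially direct applications of logarithmic estimates or the Hörmander bound.
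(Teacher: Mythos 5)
Your overall route is the paper's: the paper disposes of this corollary in one sentence by asserting that every condition in Definition \ref{growth} implies estimate \eqref{general-cond-reduc} and then invoking Theorem \ref{t-general-reduction}, and your treatment of condition (i) is exactly the computation of Corollary \ref{reduction-p-log}. The problem is that the difficulty you flag in the remaining cases is not mere bookkeeping; it is a genuine gap, and the repair you sketch cannot work. Under condition (ii), say with $a_j\geq 0$ and $a_{j0}>0$, for every $\xi$ with $\beta_j(\xi)>0$ one has
\[
\sup_{\zeta}\int_0^{\zeta}\Im\mathcal{M}_j(s,\xi)\,ds \;\geq\; 2\pi a_{j0}\,\beta_j(\xi)+O(\log\|\xi\|),
\]
which is super-logarithmic, so \eqref{general-cond-reduc} fails outright; the fixed sign of $a_j$ only controls the frequencies with $\beta_j(\xi)\leq 0$. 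Likewise, condition \eqref{Hormander condition} bounds $\sup_t\Im\mathcal{M}_j(t,\xi)$ from \emph{below}, which (read pointwise, as in Proposition \ref{prop-hormander-system}) controls $\exp\bigl(-\int\Im\mathcal{M}_j\bigr)$, i.e.\ the $\Psi^{-1}$ side, and gives no upper bound on the quantity appearing in \eqref{general-cond-reduc}.

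The escape you propose --- switching between the solution formulas \eqref{solutions-1} and \eqref{solutions-2} according to the sign of $\beta_j(\xi)$ --- is structurally unavailable here for two reasons. First, those formulas belong to the proof of Proposition \ref{general-suff-cond} and require dividing by $1-e^{-2\pi i\mathcal{M}_{j0}(\xi)}$, i.e.\ the global hypoellipticity of some $L_{j0}$; Corollary \ref{coro-gene-psi} is precisely the statement that is supposed to work without that assumption (cf.\ the example opening Section \ref{sec-reduction}, where no $L_{j0}$ is globally hypoelliptic). Second, Theorem \ref{t-general-reduction} has a single one-sided hypothesis anchored at $\zeta=0$ and produces a single multiplier $\Psi$; there is no branch to select. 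What is actually needed is a polynomial bound on both $|e^{\mathcal{B}(t,\xi)-i\mathcal{A}(t,\xi)}|$ and its reciprocal, i.e.\ on $\exp\bigl(\pm\sum_j(\beta_j(\xi)A_j(t_j)+\alpha_j(\xi)B_j(t_j))\bigr)$, and you would have to verify that each condition in Definition \ref{growth} delivers at least the relevant side for all large $\xi$ --- which condition (ii) with sign-changing $\beta_j(\xi)$ does not obviously do. (To be fair, the paper's own justification is the same one-line assertion and suffers from the same defect, so the gap is inherited rather than introduced; but as a proof your proposal is incomplete at exactly the point you identify.)
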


\begin{example}\label{exe-qui-slog-hor}
	Consider   the system
	$$
L_j = D_{t_j} +  c_j(t_j) P_j(D_x), \quad j=1,2,3,
	$$
	defined on $\mathbb{T}^4$, where:
	\begin{itemize}
		\item there exists a constant $\Theta$, such that $\Im \mathcal{M}_1(\cdot, \xi) \leq \Theta$,  $\forall \xi \in \mathbb{Z}$;
		
		\item $p_2(\xi)= \log(1+ |\xi|)$;
		
		\item $p_3(\xi)= 1 +  i \xi$  and $a_3$ is positive.
		
	\end{itemize}
	
	In this case, 
	$ \mathcal{H} \cup \mathcal{L}= \{1, 2, 3\}$, then $\mathbb{L} \sim \mathbb{L}_0$.
\end{example}

\subsection{Partial  normal forms} 

This section discusses  some \textit{partial  normal forms}, namely, the systems $\mathbb{L}_{a_0}$ and  $\mathbb{L}_{b_0}$ defined respectively by
$$
L_{a_{j0}} = D_{t_j} + (a_{j0} + i b_{j}(t_j))P_j(D_x),  \quad j =1, \ldots, n
$$
and 
$$
L_{b_{j0}} = D_{t_j} + (a_{j}(t_j) + i b_{j0})P_j(D_x), \quad j =1, \ldots, n.
$$

To this end, set 
\begin{equation*}
\Psi_a u =  \sum_{\xi \in \mathbb{Z}^N} \widehat{u}(t, \xi) 
e^{-i \mathcal{A}(t,\xi)} e^{i x \cdot  \xi}
\ \textrm{ and } \
\Psi_b u =  \sum_{\xi \in \mathbb{Z}^N} \widehat{u}(t, \xi) 
e^{\mathcal{B}(t,\xi)} e^{i x \cdot  \xi}.
\end{equation*}

\begin{proposition}\label{prop-isomorphism}
	If $\beta_j(\xi)=O(\log(\|\xi\|)),$ for each $j=1, \ldots n$, then  $\Psi_a$  is an isomorphism on both the spaces $\mathcal{D}'(\mathbb{T}^{n+N})$ and $C^{\infty}(\mathbb{T}^{n+N})$. In addition, the following conjugation holds 
	\begin{equation}\label{Psi-a-1}
	L_{a_{j0}} = \Psi_a^{-1} \circ L_j \circ  \Psi_a,
	\end{equation}
	where 
	\begin{equation}
	\Psi_a^{-1} u =  \sum_{\xi \in \mathbb{Z}^N} \widehat{u}(t, \xi) 
	e^{i \mathcal{A}(t,\xi)} e^{i x \cdot  \xi}.
	\end{equation}
   In this case, $\mathbb{L} \sim \mathbb{L}_{a_0}$.

	Analogously, if $\alpha_j(\xi)=O(\log(\|\xi\|)),$ for each $j=1, \ldots n$, then $\Psi_b$  is an isomorphism on both the spaces $\mathcal{D}'(\mathbb{T}^{n+N})$ and $C^{\infty}(\mathbb{T}^{n+N}),$ and it	satisfies 
	\begin{equation}\label{Psi-b-1}
	L_{b_{j0}} = \Psi_b^{-1} \circ L_j \circ  \Psi_b,
	\end{equation}
	where 
	\begin{equation}
	\Psi_b^{-1} u =  \sum_{\xi \in \mathbb{Z}^N} \widehat{u}(t, \xi) 
	e^{-\mathcal{B}(t,\xi)} e^{i x \cdot  \xi}.
	\end{equation}
	In this case, $\mathbb{L} \sim \mathbb{L}_{b_0}$.
\end{proposition}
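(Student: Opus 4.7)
The plan is to mimic, almost line by line, the scheme of Theorem \ref{t-general-reduction}. I will (i) check that $\Psi_a$ and $\Psi_a^{-1}$ map $\mathcal{D}'(\mathbb{T}^{n+N})$ and $C^\infty(\mathbb{T}^{n+N})$ into themselves, (ii) verify on Fourier coefficients that $\Psi_a\circ\Psi_a^{-1}=\Psi_a^{-1}\circ\Psi_a=I$, (iii) confirm the conjugation identity \eqref{Psi-a-1} by a direct computation at the level of $x$-Fourier coefficients, and (iv) transfer global hypoellipticity through that conjugation. The case of $\Psi_b$ and $\mathbb{L}_{b_0}$ follows by the obvious symmetry $(\alpha,a)\leftrightarrow(\beta,b)$, so the argument concentrates on $\Psi_a$.

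For step (i), the key observation is that
$$
|e^{-i\mathcal{A}(t,\xi)}| = \exp\!\left(\sum_{j=1}^n \beta_j(\xi)\,A_j(t_j)\right),
$$
so the hypothesis $\beta_j(\xi)=O(\log\|\xi\|)$, combined with the boundedness of the smooth periodic functions $A_j$, gives a polynomial bound $|e^{-i\mathcal{A}(t,\xi)}|\leq C\|\xi\|^{\kappa}$ for $\|\xi\|$ large. Leibniz applied to $\partial_t^\gamma e^{-i\mathcal{A}(t,\xi)}$ produces sums of products of factors $p_j(\xi)$ (polynomially bounded by \eqref{bound-symb}) and derivatives of $A_j$ (uniformly bounded), so every such derivative is polynomially bounded, uniformly in $t$. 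Applying Leibniz once more to the product $\widehat{u}(t,\xi)\,e^{-i\mathcal{A}(t,\xi)}$ and invoking Proposition \ref{prop-smooth} shows that $\Psi_a$ preserves both $\mathcal{D}'(\mathbb{T}^{n+N})$ and $C^\infty(\mathbb{T}^{n+N})$. The same estimate with $-\mathcal{A}$ in place of $\mathcal{A}$ handles $\Psi_a^{-1}$; step (ii) is then the pointwise identity $e^{-i\mathcal{A}}\cdot e^{i\mathcal{A}}\equiv 1$.

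For step (iii), set $\widehat{v}(t,\xi) \doteq e^{-i\mathcal{A}(t,\xi)}\,\widehat{u}(t,\xi)$ and compute
$$
\partial_{t_j}\widehat{v} + ic_j(t_j)p_j(\xi)\,\widehat{v} = e^{-i\mathcal{A}(t,\xi)}\Bigl(\partial_{t_j}\widehat{u} + \bigl[\partial_{t_j}(-i\mathcal{A}) + ic_j(t_j)p_j(\xi)\bigr]\widehat{u}\Bigr).
$$
Using $\partial_{t_j}\mathcal{A}(t,\xi)=p_j(\xi)(a_j(t_j)-a_{j0})$ together with $c_j=a_j+ib_j$, the bracketed coefficient collapses to exactly $i\bigl(a_{j0}+ib_j(t_j)\bigr)p_j(\xi)$, which is the coefficient in $L_{a_{j0}}$. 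Hence $L_j\circ\Psi_a = \Psi_a\circ L_{a_{j0}}$, proving \eqref{Psi-a-1}. Step (iv) is then immediate: if $L_j u=f_j\in C^\infty(\mathbb{T}^{n+N})$ for every $j$, the conjugation gives $L_{a_{j0}}(\Psi_a^{-1}u) = \Psi_a^{-1}f_j \in C^\infty(\mathbb{T}^{n+N})$, whence global hypoellipticity of $\mathbb{L}_{a_0}$ forces $\Psi_a^{-1}u\in C^\infty(\mathbb{T}^{n+N})$ and therefore $u\in C^\infty(\mathbb{T}^{n+N})$; the converse is the same argument with $\Psi_a$ and $\Psi_a^{-1}$ exchanged.

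The only real obstacle is the bookkeeping in the derivative estimate of step (i); the remaining steps are algebraic. It is worth noting \emph{why} the weaker hypothesis $\beta_j=O(\log\|\xi\|)$ (as opposed to a bound on the entire symbol $p_j$, compare Corollary \ref{reduction-p-log}) is precisely what is needed here: only the imaginary part of $p_j$ contributes to the modulus of $e^{-i\mathcal{A}}$, since $e^{-i\alpha_j A_j}$ is unimodular, so no growth control on $\alpha_j$ is required at all.
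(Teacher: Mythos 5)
Your proposal is correct and follows essentially the same route as the paper: both arguments rest on the observation that $|e^{-i\mathcal{A}(t,\xi)}|=\exp\bigl(\sum_j\beta_j(\xi)A_j(t_j)\bigr)$ is polynomially bounded under the logarithmic hypothesis, use Leibniz to control $t$-derivatives, and then transfer global hypoellipticity through the conjugation \eqref{Psi-a-1}. The only difference is that you spell out the Fourier-coefficient computation behind \eqref{Psi-a-1} (which the paper declares ``immediate''), and that detail is carried out correctly.
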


\begin{proof}
	Let us start with the partial normal form  $\mathbb{L}_{a_0}$. For this,  set
\begin{equation*}
\psi_{a}(t, \xi) = e^{-i \mathcal{A}(t,\xi)}\widehat{u}(t, \xi), \quad \xi \in \mathbb{Z}^N.
\end{equation*}

Given $\gamma = (\gamma_1, \ldots, \gamma_n) \in \mathbb{Z}^{N}_+$, it follows from Leibiniz's formula and the inequality \eqref{exp_A} that
\begin{equation}\label{exp-ine-A}
|\partial_t^\gamma \psi_{a}(t, \xi)| \leq C \|\xi\|^{\omega}  
\exp \left\{\sum_{j=1}^{n}\beta_j(\xi)A_j(t_j) \right\}
\sum_{\sigma = 0}^{\gamma}
\binom{\gamma}{\sigma}
|\partial_t^{\gamma - \sigma} \widehat{u}(t, \xi)|,
\end{equation}
where $\omega = \sum_{j=1}^{n}m_j \gamma_j.$  Since $\beta_j(\xi)=O(\log(\|\xi\|)),$  there exist $\kappa_j>0$ and $n_{j0}\in \mathbb{N}$ such that
\begin{equation}\label{log-estimate-3}
|\beta_j(\xi)| \leq  \log(\|\xi\|^{\kappa_j}), \quad \forall \ \|\xi\|\geq  n_{j0}.
\end{equation}

By an  approach similar to the one in the proof of Proposition \ref{prop_log}, we obtain  constants satisfying  
\begin{equation}\label{ne1}
e^{\beta_j(\xi) A_j(t_j)} \leq  \|\xi\|^{\delta_{j}}, \quad \|\xi\|\geq n_{j0},
\end{equation}

Hence, it follows from \eqref{exp-ine-A} and  \eqref{ne1} that
\begin{equation*}
|\partial_t^\gamma \psi_{a}(t, \xi)| \leq C \|\xi\|^{\omega + \delta_{j}}  
\sum_{\sigma = 0}^{\gamma}\binom{\gamma}{\sigma}
|\partial_t^{\gamma - \sigma} \widehat{u}(t, \xi)|, \quad \|\xi\| \to \infty.
\end{equation*}

Now, it is easy to see that if $u \in \mathcal{D}'(\mathbb{T}^{n+N})$, then  
$\Psi_a u \in \mathcal{D}'(\mathbb{T}^{n+N})$, while 
$u \in C^{\infty}(\mathbb{T}^{n+N})$ imply $\Psi_a u \in C^{\infty}(\mathbb{T}^{n+N}) $. By a slight modification in the above arguments, we may obtain the same conclusion for $\Psi_b$. The checking of \eqref{Psi-a-1} and \eqref{Psi-b-1} is immediate.

Now, admit that $\mathbb{L}$  is globally hypoelliptic and  let $u \in \mathcal{D}'(\mathbb{T}^{n+N})$ be a solution of
$$
L_{a_{j0}} u = f_j \in C^{\infty}(\mathbb{T}^{n+N}), \quad j = 1, \ldots, n.
$$

It follows from \eqref{Psi-a-1} that 
$$
L_j (\Psi_a u)  = \Psi_a (f_j) \in C^{\infty}(\mathbb{T}^{n+N}), \quad j = 1, \ldots, n.
$$

By the hypoellipticity of $\mathbb{L}$ we have that $\Psi_a u \in C^{\infty}(\mathbb{T}^{n+N})$, which implies $u \in C^{\infty}(\mathbb{T}^{n+N})$. Therefore,  $\mathbb{L}_{a_0}$  is globally hypoelliptic and the sufficiency is proved. The necessary part is obtained by similar arguments.  The statement $\mathbb{L} \sim \mathbb{L}_{b_0}$ can be verified by following the same ideas.

\end{proof}

\begin{remark}
Proposition \ref{prop-isomorphism} recovers the following, well known,
result for systems of  vector fields: system \eqref{system-dfifferential} is globally hypoelliptic if and only if the same holds true for the system
\begin{equation*}
L_{a_{j0}} = D_{t_j} + (a_{j0}+ ib_j(t_j)) D_x,\quad   j=1,\ldots,n.
\end{equation*}
\end{remark}

\bibliographystyle{plain}
\bibliography{references}

\begin{thebibliography}{10}

\bibitem{BERG99}
A.~P. {Bergamasco}.
\newblock {Remarks about global analytic hypoellipticity.}
\newblock {\em {Trans. Am. Math. Soc.}}, 351(10):4113--4126, 1999.

\bibitem{BCM93}
A.~P. {Bergamasco}, P.~D. {Cordaro}, and P.~A. {Malagutti}.
\newblock {Globally hypoelliptic systems of vector fields.}
\newblock {\em {J. Funct. Anal.}}, 114(2):267--285, 1993.

\bibitem{BDG}
A.~P. {Bergamasco}, P.~L. {Dattori da Silva}, and R.~B. {Gonzalez}.
\newblock {E}xistence and {R}egularity of {P}eriodic {S}olutions to {C}ertain
  {F}irst-{O}rder {P}artial {D}ifferential {E}quations.
\newblock {\em J. Fourier Anal. Appl.}, 23:65--90, 2017.

\bibitem{BdMZ12}
A.~P. {Bergamasco}, C.~{de Medeira}, and S.~L. {Zani}.
\newblock {Globally solvable systems of complex vector fields.}
\newblock {\em {J. Differ. Equations}}, 252(8):4598--4623, 2012.

\bibitem{BP}
A.~P. {Bergamasco} and G.~{Petronilho}.
\newblock Global solvability of a class of involutive systems.
\newblock {\em J. Math. Anal. Appl.}, 233(1):314 -- 327, 1999.

\bibitem{Avila}
F.~{de {\'A}vila Silva}.
\newblock Global hypoellipticity for a class of periodic {C}auchy operators.
\newblock {\em J. Math. Anal. Appl.}, 483(2):123650, 2020.

\bibitem{AGKM}
F.~{de {\'A}vila Silva}, R.~B. {Gonzalez}, A.~{Kirilov}, and C.~{Medeira}.
\newblock Global hypoellipticity for a class of pseudo-differential operators
  on the torus.
\newblock {\em J. Fourier Anal. Appl.}, 25(4):1717--1758, 2019.

\bibitem{GW1}
S.~J. Greenfield and N.~R. Wallach.
\newblock Global hypoellipticity and {L}iouville numbers.
\newblock {\em Proc. Amer. Math. Soc.}, 31(1):112--114, 1972.

\bibitem{himonas99}
A.~{Himonas} and G.~{Petronilho}.
\newblock On global hypoellipticity of degenerate elliptic operators.
\newblock {\em Math Z}, 230:241--257, 1999.

\bibitem{Him01}
A.~A. {Himonas}.
\newblock {Global analytic and Gevrey hypoellipticity of sublaplacians under
  Diophantine conditions.}
\newblock {\em {Proc. Am. Math. Soc.}}, 129(7):2061--2067, 2001.

\bibitem{Hormander3}
L.~{H{\"{o}}rmander}.
\newblock {\em The Analysis of Linear Partial Differential Operators III}.
\newblock Springer-Verlag Berlin Heidelberg, 2007.

\bibitem{AKM}
A.~{Arias} Junior, A.~{Kirilov}, and C.~{Medeira}.
\newblock Global {G}evrey hypoellipticity on the torus for a class of systems
  of complex vector fields.
\newblock {\em J. Math. Anal. Appl.}, 474(1):712 -- 732, 2019.

\bibitem{Maire80}
H.~M. Maire.
\newblock Hypoelliptic overdetermined systems of partial differential equation.
\newblock {\em Comm. Partial Dierential Equations}, 5(4):331--380, 1980.

\bibitem{Marques}
D.~{Marques} and J.~{Schleischitz}.
\newblock On a problem posed by {M}ahler.
\newblock {\em J. Aust. Math. Soc.}, 100(1):86--107, 2016.

\bibitem{moser}
J.~{Moser}.
\newblock Quasi-periodic solutions of nonlinear elliptic partial differential
  equations.
\newblock {\em Boletim da Sociedade Brasileira de Matemática -
  Bulletin/Brazilian Mathematical Society}, 20:29--45, 1989.

\bibitem{RT3}
M.~{Ruzhansky} and V.~{Turunen}.
\newblock {\em Pseudo-Differential Operators and Symmetries: Background
  Analysis and Advanced Topics, Series: Pseudo-Differential Operators}.
\newblock Birkhäuser Basel, 2th edition, 2010.

\bibitem{treves76}
F.~Treves.
\newblock Study of a model in the theory of complexes of pseudodifferential
  operators.
\newblock {\em Ann. of Math.}, 104(2):269--324, 1976.

\end{thebibliography}

\end{document}